\title{Volume estimates for equiangular hyperbolic Coxeter polyhedra} 
\author{Christopher K. Atkinson} 
\address{Department of Mathematics,
Statistics, and Computer Science, University of Illinois at Chicago, 851 S.
Morgan St., Chicago, IL
60607}
\email{atkinson@math.uic.edu} 
\urladdr{http://www.math.uic.edu/~atkinson}
\newtheorem{theorem}{Theorem}[section] 
\newtheorem{lemma}[theorem]{Lemma}
\newtheorem{corollary}[theorem]{Corollary}
\newtheorem{proposition}[theorem]{Proposition}
\newtheorem*{acknowledgments}{Acknowledgments}
\newcommand{\ZZ}{\mathbb{Z}} 
\newcommand{\QQ}{\mathbb{Q}}
\newcommand{\RR}{\mathbb{R}}
\newcommand{\DD}{\Delta}
\newcommand{\GG}{\Gamma}
\def\vol{\mbox{\rm{Vol}}} 
\def\area{\mbox{\rm{Area}}}
\def\N{\mbox{\rm{N}}}
\def\Vol{\mbox{\rm{Vol}}} 
\def\area{\mbox{\rm{Area}}}
\def\min{\mbox{\rm{min}}}
\def\deg{\mbox{\rm{deg}}}
\def\HH{\mathbb{H}}
\def\AA{\mathcal{A}}
\def\FF{\mathcal{F}}
\def\PP{\mathcal{P}}
\def\AA{\mathcal{A}}
\def\WW{\mathcal{W}}
\def\BB{\mathcal{B}}
\def\GG{\mathcal{G}}
\def\TT{\mathcal{T}} 
\def\aa{\alpha} 
\def\QQ{\mathcal{Q}}
\def\RRR{\mathcal{R}}
\begin{document}

\begin{abstract}  An equiangular hyperbolic Coxeter polyhedron is a hyperbolic
polyhedron where all dihedral angles are equal to $\pi/n$ for some fixed
$n\in\ZZ$, $n\geq 2.$  It is a consequence of Andreev's theorem that either
$n=3$ and the polyhedron has all ideal vertices or that $n=2$.  Volume estimates
are given for all equiangular hyperbolic Coxeter polyhedra.  
\end{abstract}

\maketitle

\section{Introduction}

An orientable $3$--orbifold, $\QQ$, is determined by an underlying $3$--manifold,
$X_{\QQ}$, and a trivalent graph, $\Sigma_{\QQ}$, labeled by integers.  If $\QQ$
carries a hyperbolic structure then it is unique by Mostow
rigidity, so the hyperbolic volume of $\QQ$ is an invariant of $\QQ$.
Therefore, for hyperbolic orbifolds 
with a fixed underlying manifold, the volume is a function of the
labeled graph $\Sigma.$  In this paper, methods
for estimating the volume of orbifolds of a restricted type in terms of this
labeled graph will be described.

The orbifolds studied in this paper are quotients of $\HH^3$ by reflection
groups generated by reflections in hyperbolic Coxeter polyhedra.  A Coxeter
polyhedron is one where each dihedral angle is of the form $\pi/n$ for some
$n\in \ZZ$, $n\geq 2$.  Given a hyperbolic Coxeter polyhedron $\PP$, consider
the group generated by reflections through the geodesic planes determined by its
faces, $\Gamma(\PP)$.  Then $\Gamma(\PP)$ is a Kleinian group which acts on
$\HH^3$ with fundamental domain $\PP$.  The quotient, $\mathcal{O}=\HH^3/
\Gamma(\PP)$, is a
non--orientable orbifold with singular locus $\PP^{(2)}$, the $2$--skeleton of
$\PP$.  One may think of obtaining $\mathcal{O}$ by ``mirroring" the faces of
$\PP$.  It is a consequence of Andreev's theorem that any equiangular hyperbolic Coxeter polyhedron
has either all dihedral angles equal to $\pi/3$ and is ideal or has all dihedral
angles equal to $\pi/2$ \cite{andreev1, andreev2}.  This paper gives two-sided,
combinatorial volume estimates for all equiangular
hyperbolic Coxeter polyhedra.  In this paper, only polyhedra with finite volume
will be considered.
 
Lackenby gave volume estimates for hyperbolic alternating link complements in
\cite{lackenby} in terms of the twist number of the link.  His work was part of
what led to this investigation of how geometric data arises from associated
combinatorial data.  Some of the techniques used
in this paper follow methods used by Lackenby.  The lower bound given by Lackenby was improved by
Agol, Storm, and Thurston in \cite{agolhaken}.  In his thesis \cite{inoue},
Inoue has identified the two smallest-volume, compact, right-angled hyperbolic
polyhedra.  He also gave a method to
order such polyhedra based on a decomposition into L\"{o}bell polyhedra,
provided that the volume of any given right-angled polyhedron can be calculated
exactly.

The results of this paper can be used to list all equiangular hyperbolic
polyhedra with volume not exceeding some fixed value.  A sample application of
this is to classify all arithmetic Kleinian maximal reflection groups.  Agol has
shown in \cite{agolreflect} that the number of such groups is finite up to
conjugacy.  Given a maximal reflection group $\Gamma$ generated by reflections
in a polyhedron $\PP$, he gives an upper bound, independent of $\Gamma$, for the
volume of $\PP/\Theta$ where $\Theta$ is the group of symmetries of $\PP$ which
are not reflections. One could therefore
attempt to classify such groups using the results of this paper by writing down a
list of all  polyhedra of sufficiently small volume and checking arithmeticity
for those for which the quotient by additional symmetries has small enough
volume.

\begin{acknowledgments}  The author would like to thank his thesis advisor, Ian
Agol, for his excellent guidance and the referee for many valuable comments.
The author was partially supported by NSF grant DMS-0504975.
\end{acknowledgments}

\section{Summary of results}

The results of the paper are outlined in this section.  Theorems \ref{sharp2},
\ref{pi2finite}, and \ref{pi2general} concern the volumes of right-angled
hyperbolic polyhedra.  Theorem \ref{sharp3} concerns hyperbolic polyhedra with
all angles $\pi/3$.  Before stating any results, some terminology will be
introduced.

An \textit{abstract polyhedron} is a cell complex on $S^2$ which can be realized
by a convex Euclidean polyhedron.  A theorem of Steinitz says that realizability
as a convex Euclidean polyhedron is equivalent to the $1$--skeleton of the cell
complex being $3$--connected \cite{steinitz}.  A graph is $3$--connected if the removal of any $2$
vertices along with their incident edges leaves the complement connected.
 A \textit{labeling} of an abstract polyhedron $P$ is a map
$$\Theta : \mbox{Edges}(P) \to (0,\pi/2].$$ 
For an abstract polyhedron, $P$, and a labeling, $\Theta$, the pair
$(P,\Theta)$ is a \textit{labeled abstract polyhedron.}   A labeled abstract
polyhedron is said to be \textit{realizable as a hyperbolic polyhedron} if there
exists a hyperbolic polyhedron, $\PP$, such that there is a label-preserving
graph isomorphism between $\PP^{(1)}$ with edges labeled by dihedral angles and $P$
with edges labeled by $\Theta$.  A \textit{defining plane} for a hyperbolic
polyhedron $\PP$ is a hyperbolic plane $\Pi$ such that $\Pi \cap \PP$ is a face
of $\PP$.  A labeling $\Theta$ which is constantly equal
to $\pi/n$ is \textit{$\pi/n$--equiangular}.  Suppose $G$ is a graph and $G^*$ is
its dual graph.  A \textit{$k$--circuit} is a simple closed curve composed of $k$
edges in $G^*$.  A \textit{prismatic $k$--circuit} is a $k$--circuit $\gamma$ so
that no two edges of $G$ which correspond to edges traversed by $\gamma$ share a
vertex.  The following theorem is a special case of Andreev's Theorem, which
gives necessary and sufficient conditions for a labeled abstract polyhedron to be
realizable as a hyperbolic polyhedron.

\begin{theorem}[Andreev's theorem for $\pi/2$--equiangular polyhedra]
A $\pi/2$--equiangular labeled abstract polyhedron $(P,\Theta)$ is realizable as a
hyperbolic polyhedron, $\PP$, if and only if the following conditions hold:
\begin{enumerate}
\item $P$ has at least $6$ faces.
\item $P$ each vertex has degree $3$ or degree $4$.
\item For any triple of faces of $P$, $(F_i,\, F_j,\, F_k),$ such that $F_i \cap
F_j$ and $F_j \cap F_k$ are edges of $P$ with distinct endpoints, $F_i \cap F_k
= \emptyset.$
\item $P^*$ has no prismatic $4$ circuits.
\end{enumerate}
Furthermore, each degree $3$ vertex in $P$ corresponds to a finite vertex in
$\PP$, each degree $4$ vertex in $P$ corresponds to an ideal vertex in $\PP$,
and the realization is unique up to isometry.
\end{theorem}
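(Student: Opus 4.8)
The plan is to deduce this statement directly from the general form of Andreev's theorem \cite{andreev1, andreev2}, specializing its hypotheses to the constant labeling $\Theta \equiv \pi/2$. Recall that the general theorem characterizes realizability of a labeled abstract polyhedron with more than four faces by three families of conditions: a \emph{vertex condition} constraining the angle sum of the edges meeting at each vertex, a \emph{prismatic $3$-circuit condition} requiring that the three labels of any prismatic $3$-circuit sum to less than $\pi$, and a \emph{prismatic $4$-circuit condition} requiring that the four labels of any prismatic $4$-circuit sum to less than $2\pi$. The bulk of the proof then consists of substituting $\pi/2$ for every label and rewriting each analytic inequality as the combinatorial statement it becomes, so that conditions (1)--(4) are seen to be equivalent to the hypotheses of the general theorem and both directions follow at once.

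First I would analyze the vertex condition. The link of a vertex of degree $k$ is a spherical or Euclidean $k$-gon according to whether the polyhedron is finite or ideal there, and a $k$-gon with all angles $\pi/2$ exists as a spherical polygon only for $k = 3$ (angle sum $3\pi/2 > \pi$) and as a Euclidean polygon only for $k = 4$ (angle sum $2\pi = (k-2)\pi$); for $k \ge 5$ the angle sum $k\pi/2$ falls strictly below $(k-2)\pi$, which is impossible for a finite-volume vertex. This forces every vertex to have degree $3$ or $4$, which is condition~(2), and it simultaneously proves the ``Furthermore'' clause: degree-$3$ vertices have spherical links and are finite, while degree-$4$ vertices have Euclidean links and are ideal.

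Next I would treat the two prismatic conditions, which now become statements forbidding any such circuit. Since $3 \cdot \frac{\pi}{2} = \frac{3\pi}{2} \not< \pi$, the general theorem admits no prismatic $3$-circuit, and since $4 \cdot \frac{\pi}{2} = 2\pi \not< 2\pi$, it admits no prismatic $4$-circuit; the latter is exactly condition~(4). For the former I must show ``no prismatic $3$-circuit'' is equivalent to condition~(3): if $F_i \cap F_j$ and $F_j \cap F_k$ are edges with distinct endpoints and $F_i \cap F_k$ is \emph{also} an edge, then $F_i, F_j, F_k$ meet pairwise with no common vertex, forming a prismatic $3$-circuit, which is forbidden. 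Finally, condition~(1) follows because the general theorem already requires more than four faces, and the two combinatorial types with exactly five faces are excluded: the triangular prism contains a prismatic $3$-circuit among its three quadrilateral faces, and the square pyramid violates condition~(3) at its apex (take $F_j$ to be the base and $F_i, F_k$ two opposite slant faces).

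The hard part will be the subtler case of condition~(3) in which $F_i \cap F_k$ is a single vertex $w$ rather than an edge, since this is not literally a prismatic $3$-circuit. Here I would argue geometrically: the distinct-endpoints hypothesis forces $w \notin F_j$ and forces $w$ to be a degree-$4$, hence ideal, vertex at which $F_i$ and $F_k$ are opposite. Passing to the upper half-space model with $w = \infty$, the defining planes of $F_i$ and $F_k$ become parallel vertical Euclidean planes, and no defining plane can be orthogonal to both: a vertical plane orthogonal to each would have to pass through $\infty = w$, forcing $w \in F_j$, while a hemisphere orthogonal to each would need its center to lie on two disjoint parallel lines. Either way the face $F_j$ cannot exist, so the configuration is never realizable, completing the equivalence of condition~(3) with the prismatic $3$-circuit condition. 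With all four conditions matched as equivalences, realizability of $(P,\Theta)$ and its uniqueness up to isometry follow from the corresponding conclusions of the general theorem.
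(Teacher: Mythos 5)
The paper never proves this statement: it is presented as a known special case of Andreev's theorem and simply cited to \cite{andreev1, andreev2}, so there is no internal argument to compare yours against. Your plan of specializing the general theorem to the constant labeling $\pi/2$ is the natural one, and the necessity arguments you give for conditions (1) and (2), and for the prismatic-circuit readings of (3) and (4), are correct. The genuine gap is in the sufficiency direction: the ``general form'' you quote, with exactly three families of conditions (vertex, prismatic $3$--circuit, prismatic $4$--circuit), is not an accurate statement of Andreev's theorem. The compact version carries an additional condition on quadrilateral faces (which at $\pi/2$ forbids quadrilateral faces outright --- yet ideal right-angled polyhedra such as the octahedron have triangular and quadrilateral faces, so that version cannot be the one you are specializing), and the finite-volume version, which is the one actually needed here since degree-$4$ vertices are ideal, carries further hypotheses --- among them a condition that specializes precisely to the case of condition (3) in which $F_i\cap F_k$ is a single vertex. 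In other words, the configuration you single out as ``the hard part'' is not something to be derived from the three conditions you list; in the correct general theorem it is an independent hypothesis, and until you verify the full hypothesis list of that theorem against conditions (1)--(4), the ``if'' direction does not follow.

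Separately, your geometric argument for the necessity of the vertex case of (3) has a hole at ``forcing $w\in F_j$.'' What you actually establish is that the defining plane $\Pi_j$ must be a vertical plane and hence contains $w=\infty$ in its closure; that does not by itself put $w$ in the closure of the \emph{face} $F_j=\Pi_j\cap\PP$, which could a priori be a bounded polygon sitting low in that plane while $\PP$'s chimney at $w$ rises elsewhere. The missing step is a convexity argument: the face $F_i$ contains both the edge $F_i\cap F_j$, a segment of the vertical line $\Pi_i\cap\Pi_j$, and points of the chimney over the link rectangle of $w$ at arbitrarily great heights, so convexity of $F_i$ forces the entire vertical ray above that edge into $F_i\cap\Pi_j\subseteq F_i\cap F_j$; hence that edge runs up to $w$, and likewise for $F_j\cap F_k$, so the two edges share the endpoint $w$, contradicting the distinct-endpoints hypothesis. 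This is fixable, but as written the key step is asserted rather than proved.
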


The first result gives two-sided volume estimates for ideal, $\pi/2$--equiangular
hyperbolic polyhedra.

\begin{theorem} \label{theorempi2} \label{sharp2} If $\PP$ is an 
ideal $\pi/2$--equiangular
polyhedron with $N$ vertices, then 
$$ (N-2) \cdot \frac{V_8}{4}  \leq \vol(\PP)\leq
(N-4) \cdot \frac{V_8}{2}, $$ 
where $V_8$ is the volume of a regular ideal
hyperbolic octahedron.  Both inequalities are equality when $\PP$ is the
regular ideal hyperbolic octahedron.    
 There is a sequence of ideal $\pi/2$--equiangular
polyhedra $\PP_i$ with $N_i$ vertices such that $\vol(\PP_i)/N_{i}$ approaches
$V_8/2$ as $i$ goes to infinity.
\end{theorem}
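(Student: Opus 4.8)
The plan is to first reduce everything to a combinatorial count and then decompose $\PP$ into ideal tetrahedra adapted to the right-angle structure. By the stated version of Andreev's theorem, every vertex of an ideal $\pi/2$-equiangular polyhedron has degree $4$, so the handshake identity gives $E = 2N$ and Euler's formula gives $F = N+2$. I would record one geometric fact at the outset: an ideal hyperbolic tetrahedron with dihedral angles $\alpha, \beta, \gamma$ (where $\alpha + \beta + \gamma = \pi$) has volume $\Lambda(\alpha) + \Lambda(\beta) + \Lambda(\gamma)$, with $\Lambda$ the Lobachevsky function; consequently a tetrahedron carrying a dihedral angle $\pi/2$ has volume $\Lambda(\beta) + \Lambda(\pi/2 - \beta) \le 2\Lambda(\pi/4) = V_8/4$, with equality exactly at angles $(\pi/2, \pi/4, \pi/4)$ by concavity of $\Lambda$ on $(0,\pi/2)$. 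Since the regular ideal octahedron is the union of four such extremal tetrahedra, the case $N = 6$ is the simultaneous equality case for both inequalities, giving the base case.

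For the upper bound I would decompose $\PP$ into ideal tetrahedra that each inherit a dihedral angle $\pi/2$ from the faces of $\PP$ — for instance by an orthoscheme-type subdivision using the totally geodesic planes perpendicular to the faces, so that the right angles of $\PP$ propagate into the pieces. Each such tetrahedron then has volume at most $V_8/4$ by the fact above, so the problem becomes a combinatorial count: I expect such a decomposition to use exactly $2(N-4)$ tetrahedra, whence $\vol(\PP) \le 2(N-4)\cdot V_8/4 = (N-4)V_8/2$. The count matches the octahedron, where $N=6$ gives $4$ pieces of total volume $V_8$, and every inequality is sharp there.

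For the lower bound I would argue via maximal cusps rather than a decomposition. Expanding a horoball at each ideal vertex until it becomes maximal, the right-angled square vertex link forces a definite lower bound on each cusp volume; Böröczky's theorem bounds the density of any horoball packing of $\HH^3$ by the density realized in the regular ideal octahedron, and converting total cusp volume into ambient volume through this density should yield $\vol(\PP) \ge (N-2)V_8/4$, the shift $-2$ reflecting the bookkeeping of the cusp count against $F=N+2$. Alternatively, a convexity comparison showing that $\PP$ contains $N-2$ disjoint copies of the extremal quarter-octahedron would give the same estimate, and I would pursue whichever accounting proves cleaner.

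Finally, for the asymptotically sharp family I would exhibit explicit ideal right-angled polyhedra $\PP_i$: a ``drum'' or antiprism-type family with two large top and bottom faces and a ``belt'' of quadrilateral side faces, or equivalently long chains built by gluing octahedra along square faces, with $N_i \to \infty$. Computing $\vol(\PP_i)$ through the tetrahedral decomposition above, the boundary contributions become negligible and $\vol(\PP_i)/N_i \to V_8/2$. The main obstacle I anticipate is the upper-bound step: producing a decomposition whose pieces genuinely each carry a right dihedral angle and whose cardinality is exactly $2(N-4)$, since a careless triangulation yields tetrahedra with no controlled angle and only the weaker per-piece bound coming from the maximal regular ideal tetrahedron, whose volume $3\Lambda(\pi/3)$ exceeds $V_8/4$.
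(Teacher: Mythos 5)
Your lower-bound strategy cannot produce the constant $V_8/4$. B\"or\"oczky's theorem bounds the density of a horoball packing of $\HH^3$ by $\sqrt{3}/(2V_3)$, the density realized by the \emph{regular ideal tetrahedron} configuration, not by the octahedron; an estimate of the form (number of cusps)$\,\times\,$(cusp volume bound)$\,\div\,$(packing density) therefore naturally yields a multiple of $V_3$, and you have not supplied the per-cusp volume bound either. (That horoball route is exactly what the paper uses for the $\pi/3$-equiangular case, where the answer is indeed a multiple of $V_3$.) For the right-angled case the paper's lower bound rests on a different idea that your sketch is missing: since the $1$--skeleton is $4$--valent, the faces can be $2$--colored black and white; un-mirroring one color class $\WW$ in the reflection orbifold $\HH^3/\Gamma(\PP)$ produces an orbifold with totally geodesic boundary of area $2\pi(N-|\WW|)$ by Gauss--Bonnet, Miyamoto's theorem gives $\vol \geq \area(\partial)\cdot V_8/(4\pi)$ --- this is the only place $V_8$ enters --- and averaging the two estimates from the two color classes, using $|\BB|+|\WW|=N+2$, yields $(N-2)V_8/4$. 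Without Miyamoto's theorem or a substitute, the constant $V_8$ has no source in your argument. Your fallback (``$\PP$ contains $N-2$ disjoint quarter-octahedra'') is asserted, not constructed.

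For the upper bound you correctly flag the obstruction yourself: no decomposition into exactly $2(N-4)$ \emph{ideal} tetrahedra each carrying a dihedral angle $\pi/2$ is constructed, and coning from a vertex generically yields ideal tetrahedra with uncontrolled angles, for which only the weaker bound $3\Lambda(\pi/3)$ per piece holds. The paper's decomposition is genuinely different: coning from an ideal vertex $v_0$ through the feet of perpendiculars $u_i$ and $w_{i,j}$ produces orthoschemes with \emph{two finite} vertices and a single free angle; the eight orthoschemes around a vertex $v$ not on a face through $v_0$ have angles summing to $2\pi$ along the edge $vv_0$ and total volume $\tfrac12\sum_l\Lambda(\pi/2-\alpha_l)\leq 4\Lambda(\pi/4)=V_8/2$ by concavity of $\Lambda$, with a deficit of $3V_8/8$ at each of the four vertices adjacent to $v_0$, giving $(N-4)V_8/2$. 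Finally, your proposed sharp families fail numerically: the ideal right-angled $n$--antiprism has volume $2n\left(\Lambda(\pi/4+\pi/(2n))+\Lambda(\pi/4-\pi/(2n))\right)$ and $2n$ vertices, so its volume-to-vertex ratio tends to $2\Lambda(\pi/4)=V_8/4$; and a chain of $k$ octahedra glued along faces (necessarily triangular --- the ideal octahedron has no square faces) has $3k+3$ vertices and volume $kV_8$, with ratio tending to $V_8/3$. Neither reaches $V_8/2$. To approach $V_8/2$ almost every vertex must locally see the infinite square-grid circle pattern, which is why the paper builds the grid-like polyhedra $\PP_{2k}$ and controls the convergence with circle-pattern rigidity (the generalization of the Hexagonal Packing Lemma).
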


The constant $V_8$ is the volume of a regular ideal hyperbolic octahedron.  In
terms of the Lobachevsky function, 
$$\Lambda(\theta)=-\int_0^{\theta} \log |2\sin\,t|\, dt,$$
 $V_8=8\Lambda(\pi/4).$  This
volume can also be expressed in terms of Catalan's constant, $K$, as $V_8 = 4K$,
where
$$K=\sum_{n=0}^{\infty} \frac{(-1)^n}{(2n+1)^2}.$$
The value of $V_8$ to five decimal places is $3.66386.$

The proof is spread throughout the paper.  The lower bound will be shown
in Section ~\ref{S:lower} to be a consequence of the stronger
Theorem~\ref{pi2lower}, which depends also on information about the number of
faces.   The upper bound in Theorem~\ref{theorempi2} will be proved in Section
~\ref{S:upper} and will be shown to be asymptotically sharp in Section
~\ref{S:seq}.

Using similar techniques, the following theorem giving volume estimates for
compact $\pi/2$--equiangular polyhedra will be proved:

\begin{theorem}
\label{pi2finite} 
If $\PP$ is a compact $\pi/2$--equiangular hyperbolic polyhedron with $N$
vertices,
then 
$$ (N-8) \cdot \frac{V_8}{32} \leq \vol(\PP) < (N-10)\cdot \frac{5V_3}{8},$$
where $V_3$ is the volume of a regular ideal hyperbolic tetrahedron.  
There is a sequence of compact polyhedra, $\PP_i$, with $N_i$ vertices such that
$\vol(\PP_i)/N_i$ approaches $5V_3/8$ as $i$ goes to infinity.
\end{theorem}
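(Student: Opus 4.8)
The plan is to establish the two inequalities and the asymptotic statement by three essentially independent arguments, following the pattern of the ideal case (Theorem~\ref{sharp2}). By Andreev's theorem every vertex of a compact $\pi/2$--equiangular polyhedron has degree $3$, so $\PP$ is simple; writing $N$, $E$, $F$ for its numbers of vertices, edges, and faces, the relations $3N=2E$ and $N-E+F=2$ give $E=\frac{3N}{2}$ and $F=\frac{N}{2}+2$, which are the only combinatorial inputs needed below.

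For the lower bound I would view the reflection orbifold $\mathcal{O}=\HH^3/\Gamma(\PP)$ through its totally geodesic boundary pattern: the silvered faces, meeting in order--$2$ edges and corner reflectors at the trivalent vertices. Passing to a torsion-free subgroup of finite index $d$ (Selberg's lemma) produces a finite-volume hyperbolic manifold containing the lifted faces as an incompressible totally geodesic surface, and cutting along that surface recovers $d$ copies of $\PP$. An Agol--Storm--Thurston/Miyamoto estimate of the form $\vol\ge c\,|\chi|$, with $c$ the octahedral constant and $\chi$ the orbifold Euler characteristic of the boundary pattern, then descends to $\PP$ with the degree $d$ cancelling. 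The combinatorial heart is the computation of that Euler characteristic: each face with $k_{f}$ sides, carrying $k_{f}$ order--$2$ corner reflectors, is a disk-with-mirror-boundary $2$--orbifold of orbifold Euler characteristic $1-\frac{k_{f}}{4}$, so summing over the faces gives
$$\chi=\sum_{f}\Bigl(1-\frac{k_{f}}{4}\Bigr)=F-\frac{1}{4}\sum_{f}k_{f}=F-\frac{E}{2}=2-\frac{N}{4}.$$
Since this is the face-sensitive quantity $F-\frac{E}{2}$, it is exactly the ``stronger, number-of-faces'' form of the estimate used in the ideal case; substituting the combinatorial relations and tracking the octahedral constant yields $\vol(\PP)\ge (N-8)\frac{V_8}{32}$. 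The one delicate point is bookkeeping the constant so that it emerges as $\frac{V_8}{8}$ per unit of $|\chi|$ in the presence of the corner reflectors.

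For the upper bound the obstacle is genuinely different, and I expect it to be the hardest part. A crude decomposition of $\PP$ into hyperbolic simplices, each bounded by the maximal simplex volume $V_3$, already uses on the order of $N$ cells and so can never reach the coefficient $\frac{5}{8}<1$; the compactness of $\PP$ must be exploited to see that these cells are uniformly bounded away from the regular ideal simplex. My plan is to subdivide $\PP$ barycentrically into orthoschemes (one per flag), bound the volume of each orthoscheme by the supremum of orthoscheme volumes compatible with the right-angled condition, and then show that the global requirement that $\PP$ close up as a compact polyhedron forces a deficit accounting for the $-10$ and for strictness. Extracting the sharp coefficient $\frac{5V_3}{8}$, rather than a lossy one, from this cell count is the main difficulty; I would check the arithmetic against the ideal-case upper bound of Theorem~\ref{sharp2}, which is proved by the analogous subdivision.

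Finally, to see that $\frac{5V_3}{8}$ cannot be improved I would produce an explicit extremal family, namely the L\"{o}bell polyhedra $L(n)$ built from two $n$--gonal faces and a belt of $2n$ pentagons, which are compact and right-angled with $N_n=4n$ vertices and of which $L(5)$ is the right-angled dodecahedron. As $n\to\infty$ the two $n$--gonal faces recede toward the sphere at infinity while the pentagonal belt approaches an ideal configuration, so the volume per vertex increases to its supremum. Computing $\vol(L(n))$ from its decomposition into Coxeter orthoschemes (or via the Schl\"{a}fli variation formula) and passing to the limit, I would verify that $\vol(L(n))/N_n\to \frac{5V_3}{8}$, simultaneously confirming asymptotic sharpness and the strictness of the upper inequality for each finite $n$.
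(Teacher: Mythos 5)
Your lower-bound argument contains the one step that cannot be made to work as stated: you cut along \emph{all} the silvered faces at once. The faces of $\PP$ meet one another along the order--$2$ edges, so their union is not an embedded totally geodesic $2$--suborbifold of $\HH^3/\Gamma(\PP)$ (equivalently, in your manifold cover the lifted face-planes intersect transversally along the lifts of the edges), and the orbifold obtained by un-mirroring all faces has boundary with corners, to which Miyamoto's theorem does not apply. One may only un-mirror a collection of faces no two of which share an edge; since the face-adjacency graph of a compact right-angled polyhedron need not be bipartite, the paper invokes the four color theorem to extract a color class of total area at least $\tfrac14\area(\partial\PP)$ and applies Miyamoto ($\vol\geq\area\cdot\tfrac{V_8}{4\pi}$) to that class alone. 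Your computation $\chi=F-\tfrac{E}{2}=2-\tfrac{N}{4}$ is correct and is exactly Gauss--Bonnet for $\area(\partial\PP)=\pi(N-8)/2$, but Miyamoto's constant is $\tfrac{V_8}{2}$ per unit of $|\chi|$, not the $\tfrac{V_8}{8}$ you need; the missing factor of $4$ is precisely the four-coloring step, not bookkeeping of corner reflectors.

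For the upper bound you identify the difficulty but do not supply the idea that resolves it: the paper cones $\PP$ from a vertex $v_0$ and, for each vertex $v$ not on a face through $v_0$, cones the star of $v$ to the ideal point $\hat v$ on the ray from $v_0$ through $v$; the six tetrahedra at $v$ then lie in an octant of a combinatorial ideal cube, whose volume is at most $5V_3$ because any ideal polyhedron of cube type splits into five ideal tetrahedra. That is where $\tfrac{5V_3}{8}$ per vertex comes from, and the $-10$ comes from each face having degree at least $5$; a flag-by-flag orthoscheme bound does not produce this coefficient. Finally, your extremal family is wrong: by Vesnin's formula, $\vol(L(n))=\tfrac{n}{2}\bigl(2\Lambda(\theta)+\Lambda(\theta+\tfrac{\pi}{n})+\Lambda(\theta-\tfrac{\pi}{n})-\Lambda(2\theta-\tfrac{\pi}{2})\bigr)$ with $\theta\to\pi/6$, so $\vol(L(n))\sim\tfrac{5n}{2}\Lambda(\pi/6)=\tfrac{5nV_3}{4}$ while $N_n=4n$, giving $\vol(L(n))/N_n\to\tfrac{5V_3}{16}$ --- only half the required value. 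Geometrically $L(n)$ converges locally to a singly periodic band, not to the doubly periodic right-angled pattern whose coned faces are genuinely ideal-cube octants; the paper instead uses polyhedra built on a periodic hexagonal grid (with tripods inserted to eliminate degree--$4$ faces) and proves convergence of the volume per vertex via circle-pattern rigidity and a continuity lemma for cone volumes of orthoschemes.
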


In terms of the Lobachevsky function, $V_3=2 \Lambda(\pi/6).$  To five decimal
places, $V_3$ is $1.01494$.  

Combining the methods of Theorems~\ref{theorempi2} and \ref{pi2finite},
estimates will be given for $\pi/2$--equiangular polyhedra with both finite and
ideal vertices:
  
\begin{theorem} 
\label{pi2general} If $\PP$ is a $\pi/2$--equiangular
hyperbolic polyhedron with $N_{\infty}$ ideal
vertices and $N_F$ finite vertices, then 
$$\frac{4N_{\infty}+N_F-8}{32} \cdot V_8 \leq \vol(\PP) < (\N_{\infty}-1)\cdot \frac{V_8}{2} +
 N_F\cdot \frac{5 V_3}{8} .$$ 
\end{theorem}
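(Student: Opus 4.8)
The plan is to prove the two inequalities separately, in each case running the argument of the corresponding pure case (Theorem~\ref{theorempi2} for the ideal vertices, Theorem~\ref{pi2finite} for the finite vertices) and reading off how the two vertex types contribute to the combinatorial count. Throughout I use Andreev's theorem for $\pi/2$--equiangular polyhedra: the finite vertices of $\PP$ are exactly its degree--$3$ vertices and the ideal vertices are exactly its degree--$4$ vertices. Writing $V=N_\infty+N_F$ for the number of vertices, summing degrees gives $2E=4N_\infty+3N_F$, and Euler's formula then yields $F=N_\infty+\tfrac12 N_F+2$; these three relations are the only combinatorial input.

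For the upper bound I would argue by decomposition. If $N_\infty=0$ the polyhedron is compact and the stronger estimate of Theorem~\ref{pi2finite} already implies the asserted bound, since $25V_3>2V_8$. So assume $N_\infty\geq 1$ and, in the upper half--space model, place a chosen ideal vertex $v_0$ at $\infty$. The four faces meeting at the degree--$4$ vertex $v_0$ become vertical planes bounding a rectangular chimney, truncated from below by the remaining faces of $\PP$, and $v_0$ itself no longer contributes a bounded region. I would then cut the truncated chimney into pieces indexed by the remaining vertices and bound each piece by the extremal local volume: a piece at an ideal (degree--$4$) vertex by $V_8/2$ and a piece at a finite (degree--$3$) vertex by $5V_3/8$, exactly the per--vertex rates appearing in Theorems~\ref{theorempi2} and~\ref{pi2finite}. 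Summing over the $N_\infty-1$ remaining ideal vertices and the $N_F$ finite vertices produces the bound $(N_\infty-1)\cdot\frac{V_8}{2}+N_F\cdot\frac{5V_3}{8}$, and strictness comes from the fact that the extremal (regular) configuration cannot be realized simultaneously in all pieces of a genuine finite--volume polyhedron.

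For the lower bound I would use the guts estimate of Agol, Storm, and Thurston \cite{agolhaken}, following the method by which the lower bounds of Theorems~\ref{theorempi2} and~\ref{pi2finite} are obtained. Passing to a torsion--free finite--index subgroup gives a finite cover $M\to\HH^3/\Gamma(\PP)$ of degree $d$ by a cusped hyperbolic manifold with $\vol(M)=d\cdot\vol(\PP)$, and the mirror faces of $\PP$ lift to a closed embedded totally geodesic surface $S\subset M$. Because every dihedral angle equals $\pi/2$ and, by Andreev's conditions~(3) and~(4), $\PP$ has no prismatic $3$-- or $4$--circuits, the complement $M\setminus S$ is acylindrical; hence its characteristic submanifold is empty and its guts is all of $M\setminus S$. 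The Agol--Storm--Thurston inequality then gives $\vol(M)\geq\frac{V_8}{2}\,|\chi(\mathrm{guts})|$, and dividing by $d$ reduces everything to a computation of the Euler characteristic per copy of $\PP$.

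The heart of the lower bound, and the step I expect to be the main obstacle, is this Euler characteristic computation. By additivity of the (orbifold) Euler characteristic over the combinatorial structure of $\PP$, the per--copy complexity should split into a global term together with a local contribution at each vertex depending only on whether the vertex has degree $3$ or $4$; carrying this out with the relations $2E=4N_\infty+3N_F$ and $F=N_\infty+\tfrac12 N_F+2$ should yield the value $\frac{4N_\infty+N_F-8}{16}$. Multiplying by $\frac{V_8}{2}$ gives the stated lower bound $\frac{4N_\infty+N_F-8}{32}\,V_8$, which specializes correctly to $(N_F-8)\frac{V_8}{32}$ when $N_\infty=0$ and to the weaker ideal estimate $(N_\infty-2)\frac{V_8}{8}$ when $N_F=0$ --- the loss of a factor of two against Theorem~\ref{theorempi2} being exactly the price of using the general guts bound in place of the sharper argument behind Theorem~\ref{pi2lower}. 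The delicate points are verifying acylindricity of $M\setminus S$ directly from the no--prismatic--circuit hypotheses and tracking the cusp (ideal--vertex) contributions correctly, since these are noncompact and must be shown to contribute to $\chi(\mathrm{guts})$ precisely the local weight claimed.
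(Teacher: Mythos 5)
Your upper bound argument is essentially the paper's own: Proposition~\ref{upper2general} places an ideal vertex $v_0$ at infinity, applies the tetrahedral decomposition of Section~\ref{S:upper}, and charges at most $V_8/2$ to each remaining ideal vertex (as in Proposition~\ref{upper2}) and at most $5V_3/8$ to each finite vertex (via the cone-on-an-ideal-cube argument and Lemma~\ref{cube}); your reduction of the case $N_{\infty}=0$ to Theorem~\ref{pi2finite} using $25V_3>2V_8$ is also fine. That half of the proposal is sound.

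The lower bound is where the proposal breaks down, in two ways. First, the surface you build the argument on does not exist: since adjacent faces of $\PP$ meet along edges at angle $\pi/2$, the preimage of $\partial\PP$ in any manifold cover is a union of totally geodesic planes crossing one another orthogonally --- a $2$--complex, not a ``closed embedded totally geodesic surface $S$.'' To get an embedded geodesic surface one must first select a subfamily of pairwise non-adjacent faces, and choosing such a subfamily that still carries a definite fraction of $\area(\partial\PP)$ is the actual content of the lower bound. The paper does this with the four color theorem: some color class $\BB$ has $\area(\BB)\geq\area(\partial\PP)/4$, un-mirroring $\BB$ yields an orbifold with totally geodesic boundary, and Miyamoto's theorem (Theorem~\ref{miyamoto}) --- not the Agol--Storm--Thurston guts inequality --- gives $\vol(\PP)\geq\area(\BB)\cdot V_8/(4\pi)$. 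Combined with the Gauss--Bonnet count $\area(\partial\PP)=\pi(8N_{\infty}+3N_F-4|\FF|)/2$ and Euler's formula, the factor $1/4$ from the coloring is precisely where the $32$ in the denominator comes from; your proposal contains no substitute for this step. Second, even granting your setup, the computation you yourself flag as ``the main obstacle'' --- that the Euler characteristic of the guts ``should yield'' $(4N_{\infty}+N_F-8)/16$ --- is never carried out, and acylindricity of the complement is asserted rather than proved. As written, the lower bound is therefore not established.
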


The proofs of the lower bounds in Theorems~\ref{pi2finite} and \ref{pi2general} appear in
Section \ref{S:lower}.  The upper bounds will be proved in Section
\ref{S:upper}.

Two-sided volume estimates for $\pi/3$--equiangular polyhedra are also given.
First, the special case of Andreev's Theorem for $\pi/3$--equiangular polyhedra
is stated.

\begin{theorem}[Andreev's theorem for $\pi/3$--equiangular polyhedra]\label{and3}
A $\pi/3$--equiangular abstract polyhedron, $(P, \Theta)$, is realizable as a
hyperbolic polyhedron, $\PP$ if each vertex of $P$ has degree $3$ and $P^*$ has
no prismatic $3$--circuits.  Furthermore, each vertex of $\PP$ is ideal and $\PP$
is unique up to isometry.  
\end{theorem}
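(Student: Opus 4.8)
The plan is to derive Theorem~\ref{and3} as a specialization of the general finite-volume version of Andreev's theorem, the same result whose $\pi/2$-equiangular case is quoted above. That theorem gives necessary and sufficient combinatorial conditions for a labeled abstract polyhedron $(P,\Theta)$ with $\Theta \co \mbox{Edges}(P) \to (0,\pi/2]$ to be realizable as a finite-volume hyperbolic polyhedron, together with a dictionary relating vertex degrees and angle sums to whether the realized vertex is finite or ideal. I would verify that each hypothesis of that general theorem is forced by, or compatible with, the two hypotheses here (all vertices of degree $3$, no prismatic $3$-circuits in $P^*$) once every label is set equal to $\pi/3$.

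First I would examine the vertices. At a degree-$3$ vertex the three incident labels sum to $3 \cdot \pi/3 = \pi$. Since the link of a finite vertex of a hyperbolic polyhedron is a spherical triangle (angle sum $> \pi$) while the link of an ideal vertex is a Euclidean triangle (angle sum $= \pi$), the value $\pi$ is exactly the borderline case that produces an ideal vertex; this simultaneously meets Andreev's vertex condition and proves the asserted conclusion that every vertex of $\PP$ is ideal. A degree-$4$ vertex, by contrast, would require its four labels to sum to $2\pi$ to be admissible, whereas $4\cdot \pi/3 = 4\pi/3 \neq 2\pi$; this is why the hypothesis restricts to degree-$3$ vertices, and I would record that no other vertex degree can occur under $\pi/3$-equiangularity.

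Next I would dispose of the circuit conditions. A prismatic $3$-circuit crosses three edges whose labels sum to $3\cdot\pi/3 = \pi$, so Andreev's strict inequality (sum $< \pi$) can never be met; the only way to satisfy it is to have no such circuit, which is precisely the standing hypothesis. A prismatic $4$-circuit crosses labels summing to $4\pi/3 < 2\pi$, so Andreev's prismatic $4$-circuit condition is automatic and imposes nothing. The face count is also automatic: a trivalent abstract polyhedron satisfies $E = 3F-6$ and $V = 2F-4$, so it has at least the four faces of a tetrahedron, and the tetrahedron itself has no prismatic $3$-circuit (any three of its faces meet at a common vertex), so the regular ideal tetrahedron is the minimal example.

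The main obstacle will be the careful bookkeeping of the remaining auxiliary conditions in Andreev's theorem—those governing triangular faces and certain quadrilateral configurations—and confirming that, under $\pi/3$-equiangularity, each of these either reduces to the no-prismatic-$3$-circuit hypothesis or holds vacuously because the uniform label $\pi/3$ is too small to violate the relevant angle inequalities. Once all conditions are matched, realizability, the ideal-vertex conclusion, and uniqueness up to isometry follow directly from the corresponding conclusions of the general theorem, completing the proof.
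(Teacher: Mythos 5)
Your proposal is correct and matches the paper's approach: the paper states this result without proof, presenting it (like the $\pi/2$ case) as a direct specialization of the general Andreev theorem, and your verification of the vertex, prismatic-circuit, and face-count conditions under the uniform label $\pi/3$ is exactly the intended justification. The only point worth flagging is that the classical statement of Andreev's theorem excludes polyhedra with very few faces (the paper's $\pi/2$ version requires at least $6$), so the tetrahedron --- realized by the regular ideal tetrahedron with all dihedral angles $\pi/3$ --- should be checked as a separate base case rather than folded into the general theorem.
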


The next theorem gives analogous results to that of
Theorem~\ref{theorempi2} for ideal $\pi/3$--equiangular polyhedra.

\begin{theorem} \label{sharp3} \label{theorempi3} If $\PP$ is an ideal
$\pi/3$--equiangular polyhedron
with $N>4$ vertices, then 
$$N\cdot\frac{V_3}{3}  
\leq \vol(\PP) \leq (3N-14) \cdot \frac{V_3}{2},$$
 where $V_3$ is the volume of a
regular ideal hyperbolic tetrahedron.  The upper bound is sharp for the regular
ideal hyperbolic cube.  
There is a sequence of ideal $\pi/3$--equiangular
polyhedra $\PP_i$ with $N_i$ vertices such that $\vol(\PP_i)/N_{i}$ approaches
$3V_3 / 2$ as $N_i$ increases to infinity.  
\end{theorem}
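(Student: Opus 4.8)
The plan is to read all three assertions off the combinatorial type of $\PP$ together with the geometry of ideal tetrahedra. By Theorem~\ref{and3} every vertex of $\PP$ is ideal of degree $3$, so Euler's formula forces $E=3N/2$ edges and $F=N/2+2$ faces, and the link of each vertex is a Euclidean equilateral triangle. The one geometric input needed for both bounds is the classical fact that a hyperbolic ideal tetrahedron has volume at most $V_3$, with equality exactly for the regular tetrahedron (all dihedral angles $\pi/3$).

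\textbf{Upper bound.} I would decompose $\PP$ into ideal tetrahedra all of whose vertices lie among the $N$ ideal vertices and bound each summand by $V_3$; it then suffices to produce a decomposition with exactly $(3N-14)/2$ tetrahedra. I would do this by induction on $N$, the base case being the cube ($N=8$, giving $5$ tetrahedra). Since $\sum_f (6-\deg f) = 6F-2E = 12$ there is always a face of degree at most $5$, and near such a face one can perform a local reduction (the inverse of the operation that adds two vertices and three tetrahedra in the asymptotic construction below) that removes two vertices and three tetrahedra, matching the jump $(3N-14)/2 \to (3(N-2)-14)/2$; the combinatorial care is in checking the reduced polyhedron still satisfies the hypotheses of Theorem~\ref{and3}. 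For sharpness, take the regular ideal cube and split it into a central tetrahedron on four alternating vertices plus four corner tetrahedra. The central one is the hull of a regular ideal configuration, hence regular; each corner tetrahedron carries an order-three rotation of the cube fixing its apex, which permutes its three pairs of opposite edges cyclically and so forces all three dihedral angles to equal $\pi/3$. Thus every piece is regular and $\vol(\PP)=5V_3=(3\cdot 8-14)V_3/2$.

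\textbf{Lower bound.} This is the step I expect to be the main obstacle. As in the $\pi/2$ case, I would derive it from a sharper statement, analogous to Theorem~\ref{pi2lower}, that also records the number of faces. Because the dihedral angles are $\pi/3$, the faces of $\PP$ span totally geodesic planes, so the reflection orbifold $\HH^3/\Gamma(\PP)$, whose volume is $\vol(\PP)$, is cut by these planes into controlled pieces. Applying an Agol--Storm--Thurston guts estimate to a manifold cover and computing the resulting Euler characteristic $\chi$ combinatorially in terms of $N$ (and $F$) should yield a bound of the form $\vol(\PP)\ge N\cdot V_3/3$. The delicate point is extracting the \emph{sharp} constant $V_3/3$ from this argument, rather than a weaker multiple, and matching it against the combinatorial count; this is where the real work lies.

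\textbf{Asymptotic sharpness.} The upper bound already gives $\limsup_i \vol(\PP_i)/N_i \le 3V_3/2$, so it remains to exhibit a family nearly meeting this cap. I would build $\PP_i$ from larger and larger convex regions modeled on the tessellation of $\HH^3$ by regular ideal tetrahedra (six around each edge, all dihedral angles $\pi/3$), arranging the bounding faces to meet at $\pi/3$ and verifying realizability through Theorem~\ref{and3}. In the tetrahedral decomposition of such a $\PP_i$ the fraction of pieces that are regular, hence of volume exactly $V_3$, tends to $1$, while the number of pieces is $(3N_i-14)/2 \sim 3N_i/2$ and the boundary contributions are of lower order; hence $\vol(\PP_i)/N_i \to 3V_3/2$, as required.
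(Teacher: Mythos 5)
Your lower bound is where the proposal breaks down most seriously. The route you suggest---un-mirroring the faces to obtain totally geodesic boundary and applying a Miyamoto or Agol--Storm--Thurston estimate---is exactly the $\pi/2$ argument, and it fails here for a reason the paper itself flags in Section~\ref{S:lower}: when the dihedral angles are $\pi/3$ rather than $\pi/2$, a face of $\PP$ is \emph{not} a suborbifold of $\HH^3/\Gamma(\PP)$, because the preimage of that face in $\HH^3$ is not a union of disjoint geodesic planes (supporting planes of adjacent faces meet at angle $\pi/3$). Deleting the corresponding generators therefore does not produce an orbifold with totally geodesic boundary, and there is no analogue of Theorem~\ref{pi2lower} to exploit. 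The paper's actual proof is entirely different: it is a horoball-packing argument in the style of Adams. One expands equal-volume cusp neighborhoods at the $N$ ideal vertices until two touch, shows (Lemma~\ref{bump}, using Corollary~\ref{3lem} on the Euclidean geometry of the vertex links) that each then has volume at least $\sqrt{3}/6$, and applies the B\"or\"oczky--Florian bound $\sqrt{3}/(2V_3)$ on the density of horoball packings to conclude $\vol(\PP) > N\cdot V_3/3$. Nothing in your sketch points toward this, and the ``real work'' you defer is the entire proof.

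The upper bound also has a genuine gap. Bounding $\vol(\PP)$ by $V_3$ times the size of an ideal triangulation on the vertices of $\PP$ is reasonable, but the key combinatorial claim---that every such $\PP$ admits a vertex triangulation with at most $(3N-14)/2$ tetrahedra---is unproven and far from obvious. An Euler characteristic count shows a vertex triangulation of a $3$--ball with $N$ boundary vertices and $E_i$ interior edges has $N-3+E_i$ tetrahedra, so you need $E_i\le N/2-4$; the obvious pulling/coning triangulation already has about $2N-10$ tetrahedra, which exceeds your target for every $N>6$ (even for the cube it gives $6$, not $5$), and minimizing the number of tetrahedra in a triangulation is a notoriously hard problem. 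Your inductive ``local reduction'' is never specified, and you do not verify that the reduced polyhedron remains realizable. The paper sidesteps all of this: it cones from a single vertex $v_0$ through nearest-point projections onto the opposite faces, producing tetrahedra with two finite vertices, groups them around the vertices of $\PP$, and uses concavity of the Lobachevsky function (Lemma~\ref{maxlem}) to bound each vertex's contribution by $3V_3/2$ with explicit deficits at the vertices on faces through $v_0$, which yields $(3N-14)V_3/2$ directly. For asymptotic sharpness your heuristic (large chunks of the regular ideal tetrahedral tessellation) is the right one, but you supply no mechanism forcing the pieces to become regular; the paper constructs explicit combinatorial types $\QQ_{2k}$ from a hexagonal tiling, realizes them by Andreev's theorem, and uses disk-pattern rigidity (Proposition~\ref{hrs} and Lemma~\ref{vc}) to prove that the cone volumes of all but $O(k)$ of the $O(k^2)$ faces converge to $3V_3$.
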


The lower bound in this theorem will be proved in Section \ref{S:lower3} by packing
horoballs around the vertices.  The upper bound will be proved in Section
\ref{S:upper} and will be shown to be asymptotically sharp in Section
\ref{S:seq}, mirroring the proofs in the $\pi/2$ case.  In a personal
communication, Rivin has indicated how to improve the lower bound to $N \cdot
\frac{3 V_3}{8}$.  His argument will be briefly described at the end of Section
\ref{S:lower3}.

\section{Lower volume bound for ideal $\pi/2$--equiangular polyhedra}\label{S:lower} 
The key result used in proving
the lower volume bound in Theorems~\ref{theorempi2} and \ref{pi2lower} is a
theorem of Miyamoto which says that the volume of a complete hyperbolic
$3$--manifold with totally geodesic boundary is greater than or equal to a
constant multiple of the area of the boundary \cite{miyamoto}:

\begin{theorem}[Miyamoto] \label{miyamoto}  If $\mathcal{O}$ is a complete
hyperbolic $3$--orbifold with non-empty totally geodesic boundary, then
 $$\vol(\mathcal{O})\geq \area(\partial
\mathcal{O}) \cdot \frac{V_8}{4 \pi}$$ 
with equality only if $M$ can be decomposed into
regular ideal hyperbolic octahedra.
\end{theorem}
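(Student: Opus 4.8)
The plan is to reduce the global inequality to a sharp \emph{local density bound} in $\HH^3$, in the spirit of simplicial packing-density arguments. First I would pass to the universal cover: since $\partial\mathcal{O}$ is totally geodesic and embedded, it lifts to a family $\{\Pi_i\}$ of pairwise disjoint geodesic planes in $\HH^3$ bounding a convex region $\widetilde{\mathcal{O}}$ invariant under $\pi_1(\mathcal{O})$. It then suffices to prove the estimate locally: decompose $\widetilde{\mathcal{O}}$ into the Dirichlet regions $R_i$ of points strictly closer to $\Pi_i$ than to any other plane, and show $\vol(R_i)\geq \area(\Pi_i\cap\partial R_i)\cdot V_8/(4\pi)$ for each $i$. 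Summing over a fundamental set and projecting, the faces $\Pi_i\cap\partial R_i$ tile $\partial\mathcal{O}$ and reproduce the global inequality.

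Next I would set up Fermi coordinates based at $\Pi_i$, writing a point of $R_i$ as a pair $(y,t)$ with $y\in\Pi_i$ its nearest-point projection and $t=d(\cdot,\Pi_i)$; the hyperbolic volume element is then $\cosh^2 t\, dA(y)\, dt$. Writing $h(y)$ for the distance from $y$ to $\partial R_i$ along the normal geodesic, the desired bound becomes $\int_{\Pi_i\cap\partial R_i}\big(\int_0^{h(y)}\cosh^2 t\, dt\big)\,dA(y)\geq \area(\Pi_i\cap\partial R_i)\cdot V_8/(4\pi)$. A naive pointwise bound on $\int_0^{h(y)}\cosh^2 t\,dt$ fails, because $h(y)$ is small exactly where a competing plane comes close; the volume apparently ``lost'' there must be recovered from the volume that the competing plane simultaneously charges to its own face.

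To make this precise I would subdivide each $R_i$ according to which neighboring plane realizes the nearest competing wall over a given direction, cutting $\widetilde{\mathcal{O}}$ into cells each governed by a finite configuration of geodesic planes. The heart of the argument is then a finite-dimensional optimization: among all admissible configurations, the \emph{local density} — the ratio of the volume of a cell to the boundary area it charges — is minimized by the cell occurring in the tiling of $\HH^3$ by regular ideal octahedra, where eight planes meet symmetrically around each ideal vertex. Integrating this extremal density over the boundary yields the constant $V_8/(4\pi)$.

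Equality would then force every cell to be isometric to the extremal octahedral cell, so that $\widetilde{\mathcal{O}}$, and hence $\mathcal{O}$, decomposes into regular ideal hyperbolic octahedra, as claimed. I expect the main obstacle to be precisely the sharp local optimization: proving that the regular ideal octahedral cell minimizes volume-to-area among all configurations of competing geodesic planes. This is the step where dimension three and the specific value $V_8$ enter, and it is technically the most delicate part, analogous to the extremal-simplex estimate at the core of hyperbolic packing-density theorems.
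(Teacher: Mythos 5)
This statement is quoted, not proved, in the paper: Miyamoto's inequality is cited from the literature, and the only argument the paper actually supplies is the reduction of the orbifold case to the manifold case --- by Selberg's lemma a complete hyperbolic $3$--orbifold $\mathcal{O}$ with totally geodesic boundary has a finite manifold cover $M$ of some degree $m$, and since $\vol(M)=m\cdot\vol(\mathcal{O})$ and $\area(\partial M)=m\cdot\area(\partial\mathcal{O})$ the manifold inequality descends. Your proposal does not address this reduction at all, and it is in fact the one place where care is needed for orbifolds: your ``sum over a fundamental set and project'' step is exactly where torsion in $\pi_1(\mathcal{O})$ would interfere with matching cells to boundary area, which is why the paper sidesteps it by passing to a manifold cover.

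More seriously, your sketch of a from-scratch proof of the manifold case defers its entire mathematical content to the final step, which you explicitly leave open: the claim that among all admissible configurations of competing geodesic planes the volume-to-charged-area ratio of a cut-locus cell is minimized by the cell of the regular ideal octahedral tiling. That is not a routine finite-dimensional optimization; it is the theorem. Moreover, the cell-by-cell form in which you state it is not how the known proof goes: Miyamoto's argument (in the B\"or\"oczky/Kojima--Miyamoto tradition) does not bound the density of each boundary Dirichlet region directly, but introduces a simplicial density function $\rho_3(\ell)$ defined via truncated regular simplices, decomposes the manifold into such pieces, and proves the extremality of the regular configuration there; the naive Dirichlet cells can be badly shaped and there is no direct argument that each one separately satisfies the octahedral bound. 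As written, your proposal is a plausible plan whose critical lemma is both unproved and stated in a form that is not known to be attackable directly, so it does not constitute a proof.
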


Miyamoto actually only stated this theorem for manifolds.  The orbifold version
follows immediately, however.  Given a complete hyperbolic $3$--orbifold, $\mathcal{O}$, with non-empty totally
geodesic boundary, Selberg's lemma implies the existence of an integer $m$ such
that an $m$--fold cover of $\mathcal{O}$ is a manifold $M$
\cite{selberg}.  Then since $M$ is a
finite cover, $\vol (M)=m \cdot \vol (\mathcal{O})$ and $\area
(\partial M) = m \cdot \area (\partial \mathcal{O})$.

Theorem~\ref{theorempi2} is a consequence of the following
stronger theorem which also takes into account information about the faces of
the polyhedron.  Andreev's theorem for $\pi/2$--equiangular polyhedra implies
that the $1$--skeleton of an ideal $\pi/2$--equiangular polyhedron is a
four-valent graph on $S^2$.  The faces, therefore, can be partitioned into a
collection of black faces, $\BB,$ and a collection of white faces, $\WW$, so
that no two faces of the same color share an edge.  Denote by $|\BB|$ and
$|\WW|$ the number of black and white faces, respectively.

\begin{theorem}\label{pi2lower}   
Suppose $\PP$ is an ideal $\pi/2$--equiangular polyhedron with $N$ vertices.
If $\PP$ is $2$--colored with $|\BB|\geq|\WW|$, then 
$$ (N-|\WW|) \cdot \frac{V_8}{2} \leq
\vol{(\PP)}.$$ 
This inequality is sharp for an infinite family of polyhedra
obtained by gluing together regular ideal hyperbolic octahedra.  
\end{theorem}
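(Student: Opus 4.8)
The plan is to apply Miyamoto's theorem (Theorem~\ref{miyamoto}) to an auxiliary orbifold built from $\PP$ whose totally geodesic boundary is assembled from the white faces. Starting from $\PP$, I would form the orbifold $\MN$ by silvering only the black faces (declaring them mirror boundary) while leaving the white faces as genuine boundary. Since this alters only the local orbifold groups along the black faces and not the underlying hyperbolic metric, $\vol(\MN)=\vol(\PP)$, and $\partial\MN$ consists precisely of the white faces. The crucial geometric observation is that, because every dihedral angle equals $\pi/2$, along each edge a white face meets the black mirror it abuts orthogonally; reflecting in that black face therefore carries the white face to its own geodesic continuation, so the white faces piece together across the black--white edges into a smooth totally geodesic boundary $\partial\MN$.

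To see that $\MN$ is a \emph{complete} hyperbolic orbifold I would analyze the links. By Andreev's theorem for $\pi/2$--equiangular polyhedra every vertex of $\PP$ has degree $4$ and is ideal, so $\PP^{(1)}$ is $4$--valent and $E=2N$, where $E$ denotes the number of edges. The link of each ideal vertex is a Euclidean rectangle whose four sides are colored black, white, black, white in cyclic order; silvering the two opposite black sides yields a compact Euclidean $2$--orbifold with geodesic boundary, so each ideal vertex is an honest cusp and $\MN$ is complete with nonempty totally geodesic boundary. For the area count, each white face is an ideal polygon, so a face $F\in\WW$ with $k_F$ edges has area $(k_F-2)\pi$. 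Because each edge of $\PP$ separates a black face from a white face, $\sum_{F\in\WW}k_F=E=2N$, whence
\[ \area(\partial\MN)=\sum_{F\in\WW}(k_F-2)\pi=(E-2|\WW|)\pi=2\pi(N-|\WW|). \]
Miyamoto's theorem then yields
\[ \vol(\PP)=\vol(\MN)\ge \area(\partial\MN)\cdot\frac{V_8}{4\pi}=(N-|\WW|)\cdot\frac{V_8}{2}. \]
(The hypothesis $|\BB|\ge|\WW|$ is what makes silvering the black faces the better choice: keeping the smaller class $\WW$ as boundary maximizes $\area(\partial\MN)$ and hence the lower bound.)

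For sharpness I would take $\PP$ to be the regular ideal octahedron, which is itself $\pi/2$--equiangular with $N=6$ and admits a $2$--coloring with $|\BB|=|\WW|=4$; the bound then reads $V_8\le\vol(\PP)$, an equality, and the equality case of Miyamoto's theorem is attained since $\MN$ decomposes into regular ideal octahedra. Gluing copies of the octahedron face-to-face in a manner compatible with the coloring produces an infinite family of ideal $\pi/2$--equiangular polyhedra that still decompose into regular ideal octahedra, so equality persists throughout the family. I expect the main obstacle to be the verification that $\MN$ is genuinely a complete hyperbolic orbifold with totally geodesic boundary --- in particular the local analysis at the black--white edges and at the ideal vertices confirming that silvering the black faces leaves the white faces as a smooth geodesic boundary meeting the cusps correctly --- since the edge count, the area computation, and the appeal to Miyamoto are then routine.
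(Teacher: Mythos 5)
Your argument is correct and follows essentially the same route as the paper: keep the white faces as totally geodesic boundary of the reflection orbifold (the paper phrases this as removing the white generators from $\Gamma(\PP)$ and taking the convex core), verify the boundary is geodesic using the right dihedral angles and the $2$--coloring, and apply Miyamoto's theorem. The only cosmetic difference is in computing $\area(\WW)=2\pi(N-|\WW|)$: you sum ideal polygon areas directly via the edge count $E=2N$, whereas the paper doubles $\WW$ and uses the orbifold Euler characteristic with Gauss--Bonnet --- the two computations are equivalent.
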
 

\begin{proof}
Given an ideal $\pi/2$--equiangular
polyhedron $\PP$, consider the orbifold $\HH^3 / \Gamma(\PP)$, where
$\Gamma(\PP)$ is the reflection group generated by $\PP$, as described in the
introduction.  Denote the generators of $\Gamma(\PP)$ by the same symbol
denoting the face of
$\PP$ through which it is a reflection.  If $A$ is a face of $\PP$, let
$\Gamma_{A}(\PP)$ be the group obtained from $\Gamma(\PP)$ by removing the
generator $A$ and all relations involving $A$.  This may be thought of as
``un-mirroring" the face $A$.  Then $CC(\HH^3/\Gamma_{A}(\PP)),$ the convex core
of $\HH^3/\Gamma_{A}(\PP)$, is an orbifold with totally geodesic boundary and
the same volume as $\PP$.  Note that the face $A$ is a suborbifold of
$\HH^3 / \Gamma(\PP)$ because all dihedral angles of $\PP$ are $\pi/2$.  In
general, a face of a polyhedron meeting a dihedral angle not equal to $\pi/2$ will not be
a suborbifold, and removing the generators and relations corresponding to that
face from the reflection group will not give a totally geodesic boundary because
the preimage of that face in $\HH^3$ will not be a collection of disjoint
geodesic planes.  

To get the best lower bound on volume from Theorem~\ref{miyamoto}, the boundary
should be chosen to have the greatest possible area.  Given a collection of
faces $\AA =\bigcup_{i=1} ^M A_i$, denote the group obtained by removing all
generators and relations involving the $A_i$ by $\Gamma_{\AA} (\PP)$.  If no two
faces in such a collection $\AA$ share an edge, the orbifold $CC(\HH^3 /
\Gamma_{\AA} (\PP))$ has totally geodesic boundary.

As described before the statement of the theorem, $2$--color the faces of $\PP$
black and white so that no two faces of the same color share an edge.
Suppose that coloring is chosen so that the number of black faces, $|\BB|$, is at
least the number of white faces, $|\WW|$.  This choice will ensure that the sum
of the areas of the faces in 
$\WW$ is at least the sum of the areas of the black faces, as will be seen in
Lemma~\ref{areaw}.  Then,
$CC(\HH^3 /  \Gamma_{\WW}(\PP))$ is an orbifold with totally geodesic boundary
consisting of the $W_i$.  Theorem~\ref{miyamoto} applied to this orbifold gives
$$\vol(\PP) \geq \area(\WW) \cdot \rho_3 (0).$$
The following lemma gives the area of $\WW$ and completes the proof of the lower
bound.

\begin{lemma}\label{areaw}
With $\WW$ as above, 
$$\area(\WW)=2\pi(N-|\WW|).$$
\end{lemma}
\begin{proof}
Consider $D\WW$, the double of $\WW$ along its
boundary. Recall that the orbifold Euler characteristic of a $2$--orbifold $Q$ is
$$\chi(Q)=\chi(X_Q) - \sum_{i} (1-1/m_i),$$ where $X_Q$ is the underlying
topological space and $Q$ has cone points of orders $m_i$ \cite{CHK}.   The
faces in $\WW$ meet each of the $N$ vertices and each of the $E$ edges of $\PP$.
The orbifold $D\WW$ is a union over all faces of $\WW$ of doubled ideal
polygons.  Each of these doubled ideal polygons is a $2$--sphere with a cone
point of order $\infty$ for each vertex of the polygon.
 Each vertex of $\PP$ contributes
a cone point to two of these doubled polygons.  Therefore
$$\chi(D\WW)= |\WW| \cdot \chi(S^2) - 2N=2(|\WW|-N).$$ 
The punctured surface $D\WW$ is hyperbolic, being a
union of hyperbolic $2$--orbifolds.
Therefore, the Gauss--Bonnet theorem for orbifolds implies that
$$\area(D\WW)=-2\pi \chi(D\WW)=4\pi (N-|\WW|),$$ 
so that
$\area(\WW)=2\pi(N-|\WW|)$.  
\end{proof}

The polyhedra which realize the lower bound as claimed in Theorem~\ref{pi2lower}
are constructed by gluing together octahedra.  Consider a regular
ideal hyperbolic octahedron with faces colored white and black, so that no two faces
of the same color share an edge.  For a single octahedron, $N=6$ and
$|\WW|=4$, so the lower bound is equal to the volume.   To obtain an infinite
number of polyhedra which satisfy the claim, glue a finite collection of
$2$--colored regular ideal hyperbolic
octahedra together, only gluing black faces to black faces.  Each successive
gluing results in a polyhedron with $3$ more vertices and $1$ more white face.  Therefore by induction, for each example constructed in this fashion,
the lower inequality in Theorem~\ref{pi2lower} will be equality.  See
figure~\ref{figure:gluing}.  Note that
gluing octahedra in a different pattern than described yields examples which do
not satisfy the claim.   This completes the proof of Theorem~\ref{pi2lower}.
\end{proof}

\begin{figure} 
\begin{center}
\scalebox{0.475}{\includegraphics{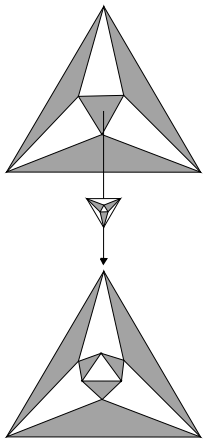}}
\end{center}
\caption{An example of the gluing.} 
\label{figure:gluing}
\end{figure}

The argument giving the lower bound in Theorem~\ref{theorempi2} as a consequence
of Theorem~\ref{pi2lower} is similar to the proof of Theorem $5$ in
\cite{lackenby}.  The idea is to average the estimate coming from the black faces
with the estimate coming from the white faces.  Consider a sort of dual polyhedron, $\GG$, to $\PP$.  The
vertices of $\GG$ are the white faces in the specified coloring of $\PP$.  For any two faces of $\PP$ which share a vertex, the corresponding
two vertices in $\GG$ are connected by an edge.  The $2$--skeleton is homeomorphic
to $S^2$, so the Euler characteristic of $\GG^{(2)}$ is $2$.  The number of
vertices, edges and faces of $\GG^{(2)}$ respectively are $|\WW|$, $N$, and
$|\BB|$, where $N$ is the number of vertices of $\PP$.  Hence $|\BB|= 2-|\WW|
+N$.  An application of Lemma~\ref{areaw} yields $\area(\BB)=2\pi
(N-|\BB|)=2\pi(|\WW|-2)$.  Hence 
$$\vol(\PP) \geq (|\WW|-2) \cdot \frac{V_8}{2}.$$  
Therefore, combining this
inequality with the inequality from Theorem~\ref{pi2lower}, $$\vol(\PP)
\geq (N-2) \cdot \frac{V_8}{4},$$ proving the lower bound of Theorem~\ref{theorempi2}.   

The lower bound in Theorem~\ref{pi2general}, where $\PP$ is a
$\pi/2$--equiangular polyhedron with vertices which are either finite or
ideal, is proved similarly.
\begin{proposition}
Suppose $\PP$ is a $\pi/2$--equiangular hyperbolic polyhedron with $N_{\infty}$ ideal vertices, $N_F$
finite vertices, and $|\FF|$ faces.  Then
$$\frac{8N_{\infty}+3N_F-4|\FF|}{32} \cdot V_8 \leq \vol(\PP).$$
\end{proposition}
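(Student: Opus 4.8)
The plan is to follow the strategy of Theorem~\ref{pi2lower}: choose a collection $\AA$ of pairwise non-edge-adjacent faces, un-mirror them so that $CC(\HH^3/\Gamma_{\AA}(\PP))$ is an orbifold with totally geodesic boundary of area $\area(\AA)=\sum_{A\in\AA}\area(A)$, and apply Miyamoto's inequality (Theorem~\ref{miyamoto}) to get $\vol(\PP)\geq\area(\AA)\cdot\frac{V_8}{4\pi}$. The new difficulty relative to the ideal case is that a polyhedron with both degree-$3$ and degree-$4$ vertices has a $1$--skeleton that is not regular, so its faces admit no $2$--coloring in which same-colored faces are never edge-adjacent. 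I would replace the $2$--coloring by an appeal to the four color theorem: the dual graph $\PP^*$ is planar, so its vertices---the faces of $\PP$---can be properly $4$--colored, which is precisely a partition of the faces into four classes, no two members of any one class sharing an edge.

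First I would compute the total area of the faces. By Andreev's theorem each ideal vertex has degree $4$ and each finite vertex has degree $3$; at a finite vertex the link is a spherical triangle all of whose angles equal the dihedral angle $\pi/2$, hence is a spherical octant, so each of its sides---which is the interior angle of the corresponding face at that vertex---has length $\pi/2$. Thus a face $F$ with $a_F$ ideal and $b_F$ finite vertices is a hyperbolic polygon with finite-vertex angles $\pi/2$ and ideal-vertex angles $0$, so Gauss--Bonnet gives
$$\area(F)=(a_F+b_F-2)\pi-b_F\cdot\frac{\pi}{2}=\left(a_F+\frac{b_F}{2}-2\right)\pi.$$
Counting vertex--face incidences by vertex degree yields $\sum_F a_F=4N_{\infty}$ and $\sum_F b_F=3N_F$, so
$$\sum_{F}\area(F)=\left(4N_{\infty}+\frac{3}{2}N_F-2|\FF|\right)\pi.$$

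By the pigeonhole principle one of the four color classes, say $\AA$, satisfies $\area(\AA)\geq\frac14\sum_F\area(F)$. Since no two faces of $\AA$ share an edge---two same-colored faces meeting only at an ideal vertex is harmless, exactly as the opposite white faces around a vertex in Theorem~\ref{pi2lower}---and since $\PP$ is $\pi/2$--equiangular, un-mirroring $\AA$ gives totally geodesic boundary as in the discussion preceding Theorem~\ref{pi2lower}. Theorem~\ref{miyamoto} then gives
$$\vol(\PP)\geq\area(\AA)\cdot\frac{V_8}{4\pi}\geq\frac14\left(4N_{\infty}+\frac{3}{2}N_F-2|\FF|\right)\pi\cdot\frac{V_8}{4\pi}=\frac{8N_{\infty}+3N_F-4|\FF|}{32}\,V_8,$$
which is the claimed bound. (Substituting Euler's formula $|\FF|=2+N_{\infty}+\frac12 N_F$ recovers the lower bound of Theorem~\ref{pi2general}.)

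The step needing the most care is the identification of the two-dimensional face angle at a finite vertex as exactly $\pi/2$, on which the Gauss--Bonnet computation---and hence the final constant---depends; the spherical-octant argument must be arranged so that the side lengths of the link triangle are genuinely the face angles. A secondary point is to confirm that edge-disjointness of $\AA$ alone suffices for the preimage planes to be pairwise disjoint in $\HH^3$, even when two same-colored faces are asymptotic at a shared ideal vertex rather than disjoint; this is already implicit in Theorem~\ref{pi2lower}, where opposite faces around each ideal vertex carry the same color.
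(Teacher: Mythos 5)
Your proposal is correct and follows essentially the same route as the paper: a four-coloring of the faces via the four color theorem, the Gauss--Bonnet computation of the total face area as $\pi\bigl(4N_{\infty}+\tfrac{3}{2}N_F-2|\FF|\bigr)$, a pigeonhole choice of a color class with at least a quarter of that area, and Miyamoto's inequality. Your additional care about the link at a finite vertex being a spherical octant and about same-colored faces meeting at ideal vertices is sound but not a departure from the paper's argument.
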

\begin{proof}
By the four color theorem, a $4$--coloring of the faces of $\PP$ may be found \cite{appelhaken,gonthier}.
One of the collections of faces of the same color, say $\BB$, has area at least
$\area(\partial\PP)/4$, where $\area(\partial \PP)$ should be interpreted as the
sum of the areas of all the faces of $\PP$.  The area of a hyperbolic $k$--gon
with interior angles summing to $S$ is $(k-2)\pi - S$.  The sum of the interior
angles of a face of $\PP$ is $n_F \cdot \pi/2,$ where $n_F$ is the number of
finite vertices of the face.  Hence the area of a single
face is $$\pi \left(n_{\infty}+\frac{n_F}{2}-2\right),$$
where $n_{\infty}$ is
the number of ideal vertices of the face.  Summing over all faces and using the
fact that each finite vertex is a vertex of three faces and each ideal vertex is
a vertex of four faces gives
$$\area(\partial \PP) = \pi \cdot \frac{8N_{\infty}+3N_F -4 |\FF|}{2}.$$
Then since $\area(\BB) \geq \area(\partial \PP)/4$,
 applying Miyamoto's theorem to $\BB$ finishes the proof of the proposition.
\end{proof}

The lower bound in Theorem~\ref{pi2finite} follows by setting $N_{\infty}=0$.


\section{Lower volume bound for ideal $\pi/3$--equiangular polyhedra}\label{S:lower3}

In this section the lower bound given in Theorem~\ref{theorempi3} will be
proved:

\begin{proposition}\label{pi3prop} If $\PP$ is an ideal $\pi/3$--equiangular
polyhedron with $N>4$ vertices, then 
$$\vol(\PP) > N \cdot \frac{V_3}{3} .$$
\end{proposition}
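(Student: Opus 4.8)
The plan is to bound $\vol(\PP)$ from below by the total volume of a family of pairwise disjoint horoball neighborhoods, one centered at each ideal vertex. By Theorem~\ref{and3}, every vertex of $\PP$ is ideal and trivalent and every dihedral angle is $\pi/3$, so the link of each vertex---its intersection with a small horosphere---is a Euclidean triangle with all three angles $\pi/3$, that is, an equilateral triangle. Working in the upper half-space model with a chosen vertex placed at $\infty$, the three faces meeting it become vertical planes cutting out an infinite equilateral triangular prism, and a horoball $\{z\ge t\}$ meets $\PP$ in a region whose volume is exactly half the area of its horospherical cross-section. Writing $H_i$ for the horoball at the $i$-th vertex and $T_i$ for its cross-section, one has $\vol(H_i\cap\PP)=\frac{1}{2}\area(T_i)$. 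Since the regions $H_i\cap\PP$ are disjoint and contained in $\PP$, it suffices to produce disjoint horoballs with $\sum_i \area(T_i) > \frac{2}{3}\,N\,V_3$.

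First I would fix the sizes of the horoballs by taking a maximal disjoint packing, enlarging all of them equivariantly until the first tangencies occur. Because each vertex is trivalent with an equilateral link, the nearby obstructions to enlarging $H_i$ are the horoballs at its three neighbors, and along the geodesic edge joining two ideal vertices the signed distance between the two horospheres equals $\log(t/d)$ in the normalization above (horoball $\{z\ge t\}$ at $\infty$ and a horoball of Euclidean diameter $d$ at the other end), so tangency along an edge is the equality $t=d$. I would expand the family until each $H_i$ is tangent to a neighbor along an edge of $\PP$, and then verify that the resulting horoballs are embedded and pairwise disjoint, the closest approaches occurring along the edges.

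The main step, and the main obstacle, is to bound the total cross-sectional area of this packing below by $\frac{2}{3}N\,V_3$, equivalently to show that the average cusp has volume at least $V_3/3=\Lambda(\pi/3)$. The model case is the regular ideal tetrahedron, where the maximal packing is the symmetric one with all four horoballs mutually tangent; there each cross-section is the equilateral triangle of area $\sqrt{3}/4$ and each cusp has volume $\sqrt{3}/8$, which is strictly less than $V_3/3$. This is exactly why the tetrahedron must be excluded, and the content of the proposition is that it is the unique minimizer: once $N>4$ the neighbors of each vertex are forced far enough apart---equivalently, the horoballs competing with $H_i$ are small enough---that every cross-sectional triangle is strictly, and quantitatively, larger. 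I expect this estimate to require a careful analysis of the maximal packing: for each vertex placed at $\infty$ one must control the largest Euclidean diameter among the competing horoballs based at the images of the other vertices under $\Gamma(\PP)$, and compare the resulting cross-section with the tetrahedral one. The trivalence and the absence of prismatic $3$-circuits should be what rule out a second locally tetrahedral configuration and force the strict surplus.

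Summing over the $N$ vertices then gives $\vol(\PP)\ge \frac{1}{2}\sum_i\area(T_i) > N\cdot \frac{V_3}{3}$, and the inequality is strict both because the tetrahedral minimum is not attained when $N>4$ and because disjoint horoballs can never exhaust $\PP$. Finally, I would note that replacing the crude cross-section estimate by the sharp maximal-cusp computation is what yields Rivin's improved constant $3V_3/8$.
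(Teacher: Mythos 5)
Your overall strategy---pack horoball neighborhoods at the ideal vertices, expand them until a first tangency, and convert cusp volume into a lower bound on $\vol(\PP)$---is the same as the paper's, but the proposal has two genuine gaps, one of which is fatal to the plan as stated. First, the entire quantitative content is deferred: you write that you ``expect this estimate to require a careful analysis of the maximal packing'' without carrying it out. In the paper this is exactly where the hypotheses enter: the no-prismatic-$3$-circuit condition forces every face to have at least $4$ edges (Corollary~\ref{cor4edges}), which via Lemma~\ref{orthgeod} yields the bound $0<r<\tfrac{3a}{4}$ on the position of a corner vertex (Corollary~\ref{3lem}), and this feeds a three-case analysis (Lemma~\ref{bump}) according to whether the competing vertex projects to the interior, an edge, or a vertex of a triangle of the link tiling. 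Note also that your assumption that the first tangencies occur ``along the edges'' of $\PP$ is not justified and not true in general: the binding case in the paper is a tangency between horoballs at \emph{non-adjacent} vertices, where the second vertex projects to the midpoint of an edge of the tiling triangle.

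The deeper problem is your final step. You reduce the proposition to showing that every cusp of the maximal packing has volume strictly greater than $V_3/3=\Lambda(\pi/3)\approx 0.338$, and then sum. But the sharp worst-case bound that the first-tangency analysis delivers is only $\sqrt{3}/6\approx 0.289$ per cusp (Lemma~\ref{bump}), which is strictly smaller than $V_3/3$; direct summation therefore gives only $\vol(\PP)\geq N\sqrt{3}/6$, which is weaker than the claimed inequality, and there is no reason to believe the per-cusp bound can be pushed up to $V_3/3$. The missing ingredient is the theorem of B\"or\"oczky and Florian that the maximal density of a horoball packing of $\HH^3$ is $\sqrt{3}/(2V_3)$: combining total cusp volume at least $N\sqrt{3}/6$ with density at most $\sqrt{3}/(2V_3)$ yields
$$\vol(\PP)\;\geq\; N\cdot\frac{\sqrt{3}}{6}\cdot\frac{2V_3}{\sqrt{3}}\;=\;N\cdot\frac{V_3}{3}.$$
Without this density theorem (or some substitute that accounts for the volume of $\PP$ outside the horoballs), your plan cannot reach the stated constant.
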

Before proving this proposition, two lemmas about ideal $\pi/3$--equiangular
polyhedra with more than $4$ vertices are needed.
 Consider a $\pi/3$--equiangular polyhedron, $\PP,$ in the upper half-space
model for $\HH^3$ with one vertex placed at the point at infinity.  The link of each vertex
is Euclidean, so must be an equilateral Euclidean triangle since all dihedral
angles are $\pi/3$.  Thus the image of $\PP$ under the orthogonal projection to
the bounding plane of $\HH^3$ is an equilateral triangle.  This
triangle will be referred to as the {\it base triangle}.  The three vertices
adjacent to the vertex at infinity will be called {\it corner vertices}.  

The following is a corollary of Andreev's theorem for $\pi/3$--equiangular
polyhedra.

\begin{corollary}\label{cor4edges}
If $\PP$ is a $\pi/3$--equiangular polyhedron which has more than $4$ vertices,
then each face of $\PP$ has at least $4$ edges.
\end{corollary}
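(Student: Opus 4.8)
The plan is to argue by contradiction: assuming that some face of $\PP$ is a triangle, I would show that $\PP$ must be the tetrahedron, which has only $4$ vertices, contradicting $N>4$. Two ingredients are available. First, by Theorem~\ref{and3} the $1$--skeleton of $\PP$ is $3$--valent and its dual has no prismatic $3$--circuit. Second, since $\PP$ is an abstract polyhedron, Steinitz's theorem guarantees that its $1$--skeleton is $3$--connected, so removing any two vertices leaves the graph connected. So suppose $F$ is a triangular face with vertices $v_1,v_2,v_3$ and edges $e_1=v_2v_3$, $e_2=v_1v_3$, $e_3=v_1v_2$. Because every vertex has degree $3$, at each $v_i$ there is exactly one further edge $f_i$, whose far endpoint I call $w_i$; a short check shows $w_i\notin\{v_1,v_2,v_3\}$, since otherwise $f_i$ would coincide with an edge of $F$.

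Next I would examine the three faces $A_1,A_2,A_3$ adjacent to $F$ across $e_1,e_2,e_3$. The degree-$3$ condition at each $v_i$ forces these three faces to be distinct and pairwise adjacent, meeting respectively along $f_1,f_2,f_3$, so $A_1,A_2,A_3$ determine a genuine $3$--circuit $\gamma$ in the dual graph that crosses precisely the edges $f_1,f_2,f_3$. Since $\PP$ has no prismatic $3$--circuit, $\gamma$ cannot be prismatic, so two of $f_1,f_2,f_3$ share a vertex. Because the near endpoints $v_1,v_2,v_3$ are distinct and each $w_i\notin\{v_1,v_2,v_3\}$, the only way two of these edges can meet is that their far endpoints coincide; say $w_1=w_2=:w$. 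Then $v_1$ is adjacent exactly to $\{v_2,v_3,w\}$ and $v_2$ exactly to $\{v_1,v_3,w\}$, and $v_1v_2w$ is in fact a second triangular face (namely $A_3$).

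Finally I would split into cases according to how many far endpoints coincide. If $w_1=w_2=w_3$, then $\{v_1,v_2,v_3,w_1\}$ spans a $K_4$ in which every vertex already has degree $3$, so there are no further edges or vertices and $\PP$ is the tetrahedron with $N=4$. If exactly two coincide, say $w_1=w_2=w\neq w_3$, then $v_1$ and $v_2$ have all their neighbors inside $\{v_1,v_2,v_3,w\}$, while $w_3$ is a genuine fifth vertex. Removing the two vertices $v_3$ and $w$ then isolates the edge $v_1v_2$ from the remaining (nonempty) part of the graph, contradicting $3$--connectivity. In either case a triangular face forces $N\le 4$, so when $N>4$ no face is a triangle; since every face of a polyhedron has at least $3$ edges, every face has at least $4$.

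The main obstacle will be the bookkeeping in the middle step: one must verify carefully that $A_1,A_2,A_3$ are genuinely distinct and pairwise adjacent, so that $\gamma$ is an honest $3$--circuit crossing exactly $f_1,f_2,f_3$, and that ``two of the $f_i$ share a vertex'' can only mean that their far endpoints coincide, rather than some degenerate coincidence with a vertex of $F$. Once these local configurations around the degree-$3$ vertices are pinned down, both the tetrahedron case and the $3$--connectivity violation follow immediately.
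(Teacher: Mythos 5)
Your proof is correct. The first half follows the paper's own argument exactly: a triangular face together with the absence of prismatic $3$--circuits and the degree-$3$ condition forces a second, adjacent triangular face (your $A_3=v_1v_2w$ is the paper's $\Delta_2$). Where you diverge is in the concluding step. The paper looks at the two remaining edges emanating from the two ``outer'' vertices ($v_3$ and $w$ in your notation), observes that both edges lie on the same pair of faces, and invokes convexity to conclude that two faces meet in at most one edge, so the two edges coincide and $\PP$ is a tetrahedron. You instead split on whether $w_3=w$: if so you get $K_4$ directly, and if not you remove $\{v_3,w\}$ and violate the $3$--connectivity guaranteed by Steinitz's theorem. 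Your finish is purely combinatorial and avoids any appeal to the geometry of the realization, at the cost of a case split and somewhat more bookkeeping; the paper's finish is shorter but leans on the geometric fact that distinct faces of a convex polyhedron share at most one edge. Both are sound, and your verification that $A_1,A_2,A_3$ are distinct (no bridges in a $3$--connected graph) and that the $f_i$ can only meet at their far endpoints is exactly the bookkeeping that needs to be done.
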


\begin{proof}
Suppose for contradiction that $\PP$ has a triangular face, $\Delta_1$.  Andreev's
theorem for $\pi/3$--equiangular polyhedra (Theorem~\ref{and3}) says that the dual graph of $\PP^{(1)}$ has no
prismatic $3$--circuits, so at least two of the edges emanating from $\Delta_1$
share a vertex.  Hence $\PP$ contains two adjacent triangular faces, $\Delta_1$
and $\Delta_2$.  Let $v_1$ be the vertex of $\Delta_1$ which is not contained in
$\Delta_2$ and $v_2$ the vertex of $\Delta_2$ which is not contained in
$\Delta_1$.
Let $e_1$ and $e_2$ be the edges emanating from $v_1$ and $v_2$ respectively which are
not contained in $\Delta_1$ or $\Delta_2$.   The edges $e_1$ and $e_2$ are both
contained in two common faces.  See figure~\ref{figure:andedge}.  Therefore by
convexity, $e_1$ and $e_2$ must actually be the same edge, which contradicts the
fact that $\PP$ has more than $4$ vertices.
\end{proof}

\begin{figure} 
\labellist
\small\hair 2pt
\pinlabel $e_1$ [t] at 249 448 
\pinlabel $e_2$ [t] at 305 448 
\pinlabel $v_1$ [tr] at 236 450
\pinlabel $v_2$ [tl] at 319 450
\pinlabel $\Delta_1$ [tr] at 258 492
\pinlabel $\Delta_2$ [tl] at 300 492
\endlabellist
\begin{center}
\scalebox{1}{\includegraphics{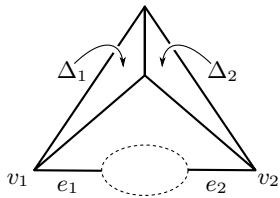}} 
\end{center}
\caption{Note that $e_1$ and $e_2$ are both part of the ``front" face and the ``back"
face.} 
\label{figure:andedge} 
\end{figure}

\begin{lemma}\label{cornersymmetry}
Suppose that $\PP$ is an ideal $\pi/3$--equiangular polyhedron with $N>4$
vertices.  If coordinates for the upper half-space model of $\HH^3$ are chosen
so that a vertex of $\PP$ is at the point at infinity, then the Euclidean
distance from a corner vertex to each of the adjacent vertices in the base
triangle are equal.
\end{lemma}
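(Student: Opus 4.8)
The plan is to produce a single hyperbolic reflection that fixes the corner vertex $v$ and the vertex at infinity while interchanging the two neighbours of $v$ lying on the base triangle, and then to observe that in the chosen coordinates this reflection restricts to a Euclidean reflection of the bounding plane and therefore preserves the two Euclidean distances in question. The crucial feature is that I do \emph{not} need any global symmetry of $\PP$: it suffices to find a symmetry of the rigid local configuration formed by the three faces meeting at $v$, and that configuration is rigid precisely because all of its dihedral angles equal $\pi/3$.

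First I would record the combinatorics of the star of $v$. Denote by $p$ the vertex of $\PP$ placed at the point at infinity. By Theorem~\ref{and3} every vertex of $\PP$ is ideal of degree $3$, so exactly three faces $F_1,F_2,F_3$ and three edges meet at $v$. One of these edges joins $v$ to $p$, and I label so that this edge is $F_1\cap F_2$; the other two edges $F_1\cap F_3$ and $F_2\cap F_3$ join $v$ to its remaining neighbours $u_1$ and $u_2$, which are exactly the two adjacent vertices lying on the base triangle. The goal is then the Euclidean identity $|v-u_1|=|v-u_2|$.

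Next I would apply the isometry of $\HH^3$ carrying $v$ to the point at infinity. Since $F_1,F_2,F_3$ all contain $v$, they become vertical half-planes, each projecting to a line in the bounding plane, and the dihedral angle along each edge at $v$ becomes the Euclidean angle between the two corresponding lines. As all three of these angles equal $\pi/3$, the three distinct lines bound an equilateral triangle, whose vertices are the feet of the three now-vertical edges at $v$, namely the images of $p$, $u_1$, and $u_2$. In these coordinates the equilateral triangle admits the reflection $\sigma$ across the perpendicular bisector of the side $u_1u_2$; because the triangle is equilateral this line also passes through the opposite vertex, so $\sigma$ fixes $p$, interchanges $u_1$ and $u_2$, and fixes $v$. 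Realized as the reflection of $\HH^3$ in the corresponding vertical geodesic plane, $\sigma$ is an isometry fixing the two ideal points $v$ and $p$ and swapping $u_1\leftrightarrow u_2$; these three facts depend only on the mutual position of the four ideal points, equivalently on the three faces $F_1,F_2,F_3$, so no assertion about $\PP$ as a whole is invoked.

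Finally I would return to the original coordinates, where $p$ is again at infinity. Since $\sigma$ fixes both $p$ and $v$, its mirror is a vertical plane whose ideal boundary is a line through $v$, so $\sigma$ acts on the bounding plane as an honest Euclidean reflection; being an isometry of the plane it preserves Euclidean distance, and since it sends $v\mapsto v$ and $u_1\mapsto u_2$ it yields $|v-u_1|=|v-u_2|$, as required. The one point demanding care — and the only real obstacle — is the temptation to argue via a global symmetry of $\PP$, which need not exist; the argument must instead be localized to the star of $v$, and one must check that $\sigma$ descends to a Euclidean rather than a merely conformal map of the bounding plane, which is guaranteed exactly because $\sigma$ fixes the \emph{second} ideal point $v$ in addition to the point at infinity.
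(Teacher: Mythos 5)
Your proof is correct and is essentially the paper's argument: both rest on the observation that, since all dihedral angles at the corner vertex equal $\pi/3$, the arrangement of the three faces there is invariant under reflection in the vertical geodesic plane bisecting the angle between the two vertical faces, and this reflection restricts to a Euclidean isometry of the bounding plane fixing the corner vertex and swapping its two neighbours in the base triangle. Your extra step of sending the corner vertex to infinity to exhibit the equilateral link triangle is just a more explicit justification of the invariance the paper asserts directly.
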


\begin{proof}
The fact that all dihedral angles are equal to $\pi/3$ implies that the
arrangement of defining planes for the corner vertex is left invariant under reflection
through a geodesic plane through infinity bisecting the angle between the two
vertical planes defining the vertex.  This proves the lemma.
\end{proof}

The following lemma shows that if $\PP$ is a non-obtuse polyhedron, then
intersections of faces of $\PP$ correspond to intersections of the defining
planes of $\PP$.  

\begin{lemma}\label{orthgeod}
If $\PP$ is a non-obtuse hyperbolic polyhedron, then the closures of two faces
$F_1$ and $F_2$ of $\PP$ intersect if and only if $\overline{\Pi}_1$ and 
$\overline{\Pi}_2$ intersect in $\overline{\HH}^3$ where  $\Pi_i$ is the
defining plane for $F_i$.
\end{lemma}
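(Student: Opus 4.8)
The two directions are of very different difficulty. Since $F_i\subseteq\Pi_i$, the closure $\overline{F}_i$ is contained in $\overline{\Pi}_i$, so $\overline{F}_1\cap\overline{F}_2\neq\emptyset$ immediately forces $\overline{\Pi}_1\cap\overline{\Pi}_2\neq\emptyset$; this is the easy direction. The plan is therefore to prove the contrapositive of the converse: assuming $\overline{F}_1\cap\overline{F}_2=\emptyset$ in $\overline{\HH}^3$, I will show that $\Pi_1$ and $\Pi_2$ admit a common perpendicular geodesic segment, which for two distinct planes happens precisely when they are ultraparallel, i.e.\ precisely when $\overline{\Pi}_1\cap\overline{\Pi}_2=\emptyset$. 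To produce this common perpendicular I would first realize the distance $d(F_1,F_2)=\delta$ by a pair of nearest points $a_1\in F_1$, $a_2\in F_2$ (positive and attained at finite points because the two closures are disjoint compact subsets of $\overline{\HH}^3$), and then argue that the segment $[a_1,a_2]$ meets each $\Pi_i$ orthogonally.

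The heart of the argument is this orthogonality, and it is exactly where non-obtuseness is used. Working in the hyperboloid model, write each defining plane as $\Pi_i=\{x:\langle x,e_i\rangle=0\}$ for a unit spacelike outward normal $e_i$, so that $\PP=\bigcap_i\{x:\langle x,e_i\rangle\le 0\}$ and the dihedral angle along an edge $F_i\cap F_j$ satisfies $\langle e_i,e_j\rangle=-\cos\theta_{ij}$. The nearest-point projection onto $\Pi_1$ is $\pi_1(q)=(q-\langle q,e_1\rangle e_1)/\sqrt{1+\langle q,e_1\rangle^2}$, and for any face $F_j$ sharing an edge with $F_1$ one computes $\langle\pi_1(q),e_j\rangle=(\langle q,e_j\rangle-\langle q,e_1\rangle\langle e_1,e_j\rangle)/\sqrt{1+\langle q,e_1\rangle^2}$. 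For $q\in\PP$ both $\langle q,e_1\rangle$ and $\langle q,e_j\rangle$ are nonpositive, and non-obtuseness gives $\langle e_1,e_j\rangle=-\cos\theta_{1j}\le 0$; hence the numerator is nonpositive and $\pi_1(q)$ lies on the $\PP$-side of every edge geodesic of $F_1$. Thus $\pi_1$ carries $\PP$ into $F_1$. Applying this to $q=a_2$, the foot $\pi_1(a_2)$ already lies in $F_1$, so it is the nearest point of $F_1$ to $a_2$; that is, $a_1=\pi_1(a_2)$ and $[a_1,a_2]\perp\Pi_1$. The symmetric computation gives $[a_1,a_2]\perp\Pi_2$, so $[a_1,a_2]$ is the desired positive-length common perpendicular and the planes are forced to be disjoint in $\overline{\HH}^3$.

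The step I expect to be the main obstacle is making the claim $\pi_1(\PP)\subseteq F_1$ fully rigorous, since the sign computation above only sees the faces $F_j$ adjacent to $F_1$, which are the ones carrying the dihedral-angle hypothesis. One must still check that no other face of $\PP$ pushes $\pi_1(q)$ outside $F_1$. A face $F_k$ whose plane is disjoint from $\Pi_1$ in $\overline{\HH}^3$ is harmless, since then $\Pi_1$ lies entirely in the half-space bounded by $\Pi_k$ that contains $\PP$, forcing $\langle\pi_1(q),e_k\rangle\le 0$ automatically; the only delicate configuration is a \emph{non-adjacent} face whose plane crosses $\Pi_1$, which is exactly what the lemma forbids. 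I would rule this out by a first-exit argument along the perpendicular geodesic from $q$ to $\Pi_1$: were that geodesic to leave $\PP$ through some $\Pi_k$ before reaching $\Pi_1$, the inner-product data at the exit face would violate $\langle e_1,e_k\rangle\le 0$ and hence non-obtuseness, so in fact the geodesic stays in $\PP$ and its endpoint lands in $F_1$. A final routine point is the treatment of ideal vertices, where $F_1$ and $F_2$ are noncompact: since their closures are disjoint in the compact space $\overline{\HH}^3$, the infimal distance is still positive and realized at finite points, and the argument above applies verbatim.
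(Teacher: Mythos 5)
Your overall strategy is the same as the paper's: both arguments reduce the hard direction to producing a common perpendicular between $\Pi_1$ and $\Pi_2$ from a distance-minimizing pair of points on the two faces, and the crux in both is the assertion that, for a non-obtuse polyhedron, the foot of the perpendicular from a point of $\PP$ to a defining plane lands in the corresponding face. The paper invokes this as a known property of non-obtuse polyhedra; you attempt to prove it, and the attempt has a genuine gap exactly at the spot you flag. Your Gram-matrix computation gives $\langle\pi_1(q),e_k\rangle\le 0$ whenever $\langle e_1,e_k\rangle\le 0$, and that inequality follows from the hypotheses only for faces $F_k$ adjacent to $F_1$ (where $\langle e_1,e_k\rangle=-\cos\theta_{1k}$) and for faces whose planes miss $\overline{\Pi}_1$ (where $\langle e_1,e_k\rangle\le -1$). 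For a non-adjacent face whose plane crosses $\Pi_1$ there is no dihedral-angle hypothesis at all, and the first-exit argument does not close the loop: what it actually proves is that if the perpendicular from $q$ to $\Pi_1$ leaves $\PP$ through $F_k$ before reaching $\Pi_1$, then $\langle e_1,e_k\rangle>0$. That is a violation of non-obtuseness only if $F_k$ shares an edge with $F_1$, which nothing forces; a non-adjacent pair with crossing planes and $\langle e_1,e_k\rangle>0$ is precisely the configuration the lemma is meant to exclude, so at this point the argument assumes what it is trying to prove.

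The standard, non-circular repair is to minimize over the face rather than over the plane: let $a_1$ be the nearest point of the closed convex set $\overline{F}_1\subset\Pi_1$ to $a_2$. If $a_1$ is interior to $F_1$, first variation gives $[a_1,a_2]\perp\Pi_1$ directly. If $a_1$ lies on an edge $F_1\cap F_j$, first variation forces $[a_1,a_2]$ to make an angle at least $\pi/2$ with the inward direction of $F_1$ at $a_1$, while convexity of $\PP$ together with the dihedral angle $\theta_{1j}\le\pi/2$ along that edge forces the angle to be at most $\pi/2$; hence it equals $\pi/2$ and the segment is again orthogonal to $\Pi_1$. This localizes the use of non-obtuseness to the edges of $F_1$, where the hypothesis actually applies, and it is in effect what the paper does by minimizing $d$ over $F_1\times F_2$ and then quoting the projection property. (A smaller point: attainment of the minimum requires the paper's sublevel-set argument --- the hyperbolic distance does not extend continuously to $\overline{\HH}^3$, so ``disjoint compact subsets of $\overline{\HH}^3$'' is not by itself a reason the infimum is achieved at finite points; one uses that points of $F_1$ and $F_2$ near their ideal vertices are far apart.)
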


\begin{proof}
Sufficiency is clear.

For necessity, the contrapositive will be proved.  Suppose that $F_1$ and $F_2$
are two faces of $\PP$ such that their closures do not intersect.  A geodesic
orthogonal to both $F_1$ and $F_2$ will be constructed.  This geodesic is also
orthogonal to both $\Pi_1$ and $\Pi_2$, and such a geodesic exists only if the
closures of the $\Pi_i$ are disjoint.
 
Choose any $x_0 \in F_1$ and $y_0 \in F_2$ and let $\gamma_0$ be the geodesic
between them.  The set, $$K(\gamma_0) = \{(x,y) \in F_1 \times F_2 \, \mid \,
d(x,y) \leq l(\gamma_0) \},$$ is a closed subset of $F_1 \times F_2$.  
There exists open subsets $N_i$ of $F_i$ containing all of the ideal vertices
of $F_i$ such that for any $z_1 \in N_1$ and $z_2 \in N_2$, $d(z_1,z_2)
> l(\gamma_0).$  Hence $K(\gamma_0)$ is also a bounded subset of $F_1 \times
F_2$, therefore compact.

It follows from compactness of $K(\gamma_0)$ that $$d_{min} = \min \{ d(x,y)\,
\mid \, (x,y) \in K(\gamma_0) \}$$ is achieved for some $(x,y) \in K(\gamma_0).$
The geodesic segment, $\gamma$, between $x$ and $y$ must be orthogonal to both
$F_1$ and $F_2$.  If not, suppose $\gamma$
is not orthogonal to $F_1$.  Since $\PP$ is non-obtuse, the orthogonal
projection of $y$ to $\Pi_1$ is contained in $F_1$.  By the hyperbolic
Pythagorean theorem, the geodesic between $y$ and its projection has
length less than that of $\gamma$.  This contradicts the construction of
$\gamma$.  The argument is identical if $\gamma$ is not orthogonal to $F_2$. 
\end{proof}

There is actually a simpler argument for the previous lemma in the case that
$\PP$ is a Coxeter polyhedron.  The development of $\PP$ into $\HH^3$ gives a
tessellation of $\HH^3$ by copies of $\PP$, so any intersection of defining planes must
correspond to an edge of $\PP$.

\begin{corollary}\label{3lem} 
Suppose $\PP$ is an ideal $\pi/3$--equiangular polyhedron with $N>4$
vertices.   Choose coordinates for the upper half-space model of $\HH^3$ so that
a vertex, $v_0,$ of $\PP$ is at the point at infinity.  Then if the distance in
Lemma \ref{cornersymmetry} from the corner vertex, $u$, to the two adjacent
vertices in the base triangle is $r$ and the edge length of the base triangle is
$a$, then $0<r<\frac{3a}{4}$.  
\end{corollary}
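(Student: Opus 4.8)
The plan is to work in the upper half-space model with the vertex $v_0$ at infinity, so that the base triangle is an equilateral triangle of side length $a$ in the bounding plane, and to convert the desired bound into a disjointness statement about two faces of $\PP$. Place the corner vertex $u$ at the origin and let $u_2, u_3$ be the other two corner vertices, with $u_2=(a,0)$ and $u_3=(a/2,\,a\sqrt{3}/2)$. The three faces of $\PP$ meeting $v_0$ are vertical half-planes lying over the three edges of the base triangle; write $P_{12}, P_{13}, P_{23}$ for the faces lying over $[u,u_2]$, $[u,u_3]$, $[u_2,u_3]$ respectively. The remaining face at $u$ is a hemisphere, and I denote its defining plane by $\Pi$. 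The lower bound $r>0$ is immediate, since the adjacent vertex is distinct from $u$.

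First I would locate the two neighbors of $u$ lying in the base triangle. The edges $\Pi\cap P_{12}$ and $\Pi\cap P_{13}$ each lie in a vertical face, so their finite ideal endpoints lie on the corresponding edges of the base triangle, at distance $r$ from $u$ by Lemma~\ref{cornersymmetry}; thus these endpoints are $(r,0)$ and $(r/2,\,r\sqrt{3}/2)$, and both lie on the boundary circle of $\Pi$. Together with $u=(0,0)$ this determines $\Pi$: a short computation shows its boundary circle has center $(r/2,\,r/(2\sqrt{3}))$ and radius $r/\sqrt{3}$. Since $(r,0)$ lies on the segment $[u,u_2]$, we have $0\le r\le a$, and by Corollary~\ref{cor4edges} the polyhedron has no triangular face, so $P_{12}$ is not the triangle $v_0\,u\,u_2$; hence $(r,0)\neq u_2$ and $0<r<a$.

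The key step is to show that the faces with defining planes $\Pi$ and $P_{23}$ are non-adjacent. If they shared an edge, then $\Pi, P_{12}, P_{23}$ would be pairwise adjacent, and I would verify that the three edges $\Pi\cap P_{12}=[u,(r,0)]$, $P_{12}\cap P_{23}=[v_0,u_2]$, and $\Pi\cap P_{23}$ have pairwise distinct endpoints, using $0<r<a$ to rule out any coincidence (in particular $(r,0)\neq u_2$ and $u_2\notin\Pi$, as $u_2$ does not lie on the circle computed above). This would exhibit a prismatic $3$--circuit in the dual graph of $\PP^{(1)}$, contradicting Andreev's theorem for $\pi/3$--equiangular polyhedra (Theorem~\ref{and3}). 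Since every vertex of $\PP$ has degree $3$, the three faces at any vertex are pairwise adjacent, so non-adjacent faces share no vertex either; hence the closures of the faces $\Pi$ and $P_{23}$ are disjoint in $\overline{\HH}^3$. I expect this combinatorial step to be the main obstacle.

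Finally I would invoke Lemma~\ref{orthgeod}: because $\PP$ is non-obtuse, disjointness of the faces forces the closures of the defining planes $\Pi$ and $P_{23}$ to be disjoint in $\overline{\HH}^3$, so in particular the boundary circle of $\Pi$ does not meet the line through $u_2$ and $u_3$, whose equation is $\sqrt{3}\,x+y=\sqrt{3}\,a$. The distance from the center $(r/2,\,r/(2\sqrt{3}))$ to this line equals $(3a-2r)/(2\sqrt{3})$, and requiring it to strictly exceed the radius $r/\sqrt{3}$ gives $3a-2r>2r$, that is $r<3a/4$. Combined with $r>0$ this yields $0<r<3a/4$, completing the argument.
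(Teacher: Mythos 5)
Your proof is correct, and it rests on the same scaffolding as the paper's: the same normalization with $v_0$ at infinity, the same two defining planes (the hemisphere $\Pi$ at the corner vertex $u$ --- your circumcircle computation, center $(r/2,\,r/(2\sqrt{3}))$ and radius $r/\sqrt{3}$, checks out --- and the vertical plane over the opposite side of the base triangle), and the same appeal to Lemma~\ref{orthgeod} to pass between faces and defining planes, ending with the same distance-versus-radius computation. Where you genuinely diverge is in what supplies the contradiction. The paper argues metrically: for $3a/4\leq r<a$ the two defining planes meet at interior dihedral angle strictly less than $\pi/3$, so by Lemma~\ref{orthgeod} the corresponding faces meet at that angle, violating $\pi/3$--equiangularity, with $r=a$ forcing a tetrahedron and $r>a$ forcing finite vertices. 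You instead argue combinatorially: adjacency of $\Pi$ and $P_{23}$ would create a prismatic $3$--circuit (the three crossed edges having pairwise distinct endpoints precisely because $0<r<a$), which Theorem~\ref{and3} forbids; trivalence then upgrades non-adjacency to disjointness of closures, the reverse direction of Lemma~\ref{orthgeod} gives disjointness of the planes, and the tangency threshold yields $r<3a/4$. Your route invokes Andreev's theorem twice (via Corollary~\ref{cor4edges} to get $r<a$, and via the prismatic circuit) but never has to identify which of the two angles between the circle and the line is the \emph{interior} dihedral angle --- a point the paper's one-line angle claim quietly relies on. Both arguments are sound; yours is longer but more self-contained on the metric side, trading an angle estimate for a combinatorial obstruction.
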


\begin{proof} 
Suppose that the three defining planes which contain $v_0$ are $\Pi_1$, $\Pi_2$
and $\Pi_3$ and that the three defining planes containing $u$ are $\Pi_1$,
$\Pi_2$ and $\Pi_4$.   If $\frac{3a}{4} \leq r < a,$ then $\Pi_3$ intersects
$\Pi_4$ with interior dihedral angle less than $\pi/3$.   By
Lemma~\ref{orthgeod}, the corresponding faces, $F_3$ and $F_4$, also have
intersecting closures, and the interior dihedral angle between $F_3$ and $F_4$
will be less than $\pi/3$.  If $r=a$, then $\PP$ would be a tetrahedron and for
$r > a$, $\PP$ would have finite vertices at the points $\Pi_1 \cap \Pi_3 \cap
\Pi_4$ and $\Pi_2 \cap \Pi_3 \cap \Pi_4$.
\end{proof}

In what follows, the intersection with $\PP$ of a closed horoball centered at a
vertex $u$ of $\PP$ which intersects only faces and edges containing $u$ will be
called a {\it vertex neighborhood}.  The next lemma is the main
observation which leads to the lower volume bound.  This lemma follows the
approach of Adams in \cite{adams}.
 
\begin{lemma}\label{bump}
Let $\PP$ be an ideal $\pi/3$--equiangular polyhedron with more than $4$
vertices.  Suppose two vertex neighborhoods of equal volume intersect with
disjoint
interiors.  Then the volume of each of
the vertex neighborhoods is at least $\frac{\sqrt{3}}{6}$.
\end{lemma}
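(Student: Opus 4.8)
The plan is to work in the upper half-space model of $\HH^3$ with one of the two vertices, say $u_1$, at the point at infinity, and to use the standard fact that the volume of a vertex neighborhood equals one half the hyperbolic area of its bounding horospherical cross-section. Since $\PP$ is $\pi/3$--equiangular, each such cross-section is the link of the vertex, a Euclidean equilateral triangle, so a neighborhood whose cross-section has (horospherical) side $s$ has volume $\frac{\sqrt{3}}{8}s^{2}$. Normalizing the horoball at $u_1$ to be $\{z\geq h\}$, the neighborhood $N_1$ is the region over the base triangle $T$; if $T$ has Euclidean side length $a$, then its cross-section at height $h$ has side $a/h$, so $\vol(N_1)=\frac{\sqrt{3}}{8}\cdot\frac{a^{2}}{h^{2}}$. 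Because $N_1$ and $N_2$ have disjoint interiors but meet, the second vertex $u_2$ must be joined to $u_1$ by an edge, hence is a corner vertex, and the two horoballs are tangent along that edge; placing $u_2$ at the origin, tangency forces the horoball at $u_2$ to have Euclidean diameter $h$.

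To compute $\vol(N_2)$ I would apply the inversion $\sigma$ in the Euclidean sphere of radius $h$ centered at $u_2$. This isometry sends $u_2$ to infinity and carries the horoball of diameter $h$ to the half-space $\{z\geq h\}$, so that $\sigma(\PP)$ again presents $u_2$ as an ideal vertex over an equilateral base triangle. By Lemma~\ref{cornersymmetry} the two vertices of $T$ adjacent to $u_2$ lie at Euclidean distance $r$ from $u_2$; since $\sigma$ sends a point at distance $r$ from its center to distance $h^{2}/r$, the base triangle of $\sigma(\PP)$ has side $h^{2}/r$, and its cross-section at height $h$ has side $h/r$. The half-area formula then gives $\vol(N_2)=\frac{\sqrt{3}}{8}\cdot\frac{h^{2}}{r^{2}}$.

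Equating the two volumes yields $a^{2}/h^{2}=h^{2}/r^{2}$, that is $h^{2}=ar$, so the common volume is
$$\vol(N_1)=\vol(N_2)=\frac{\sqrt{3}}{8}\cdot\frac{a}{r}.$$
Corollary~\ref{3lem} supplies the bound $r<\frac{3a}{4}$, whence $a/r>4/3$ and $\vol(N_i)>\frac{\sqrt{3}}{8}\cdot\frac{4}{3}=\frac{\sqrt{3}}{6}$, as required.

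The algebra at the end is routine, and the inequality $r<3a/4$ of Corollary~\ref{3lem} is exactly calibrated to produce the constant $\frac{\sqrt{3}}{6}$. I expect the main obstacle to be the geometric bookkeeping for $N_2$: checking that inversion in the sphere of radius $h$ really sends the horoball to $\{z\geq h\}$ and the adjacent vertices to distance $h^{2}/r$, and that the image cusp is again a $\pi/3$--equiangular triangular end so that the half-area formula applies verbatim. A secondary point that must be nailed down is the assertion that two tangent vertex neighborhoods sit at adjacent vertices with the tangency occurring along the connecting edge; this is precisely where one uses that a vertex neighborhood meets only the faces and edges through its vertex.
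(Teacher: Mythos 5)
There is a genuine gap at the very first step: you assert that because the two vertex neighborhoods meet with disjoint interiors, $u_2$ must share an edge with $u_1$ and the tangency must occur along that edge. This does not follow from the definition of a vertex neighborhood, and it is false. A horoball centered at $u_2$ is only required to avoid the faces and edges of $\PP$ not containing $u_2$; since $u_2$ lies in the closed base triangle and outside the shadow of every hemispherical face not through it, the entire vertical ray above $u_2$ lies in $\overline{\PP}$, so the diameter--one horoball at $u_2$ meets the horoball $\{z\geq 1\}$ at the point $(u_2,1)\in\overline{\PP}$ whether or not $u_2$ is adjacent to $u_1$. The paper therefore runs a three--case analysis according to whether the center of (a $\Gamma(\PP)$--translate of) the second horoball is a vertex of, lies in the interior of an edge of, or lies in the interior of a triangle $\DD$ in the tiling of the plane by $\GG_{\infty}$--translates of the base triangle. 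Your argument covers only the vertex (adjacent) case. Worse, the omitted edge--interior case is exactly where the minimum $\frac{\sqrt{3}}{6}$ is attained (with $\DD$ of edge length $\frac{2\sqrt{3}}{3}$ and $u_2$ at the midpoint of an edge), so your proof misses the extremal configuration entirely; the interior case gives the larger value $\frac{3\sqrt{3}}{8}$.

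For the one case you do treat, the computation is correct and agrees with the paper's: normalizing the collision to height $h$ (the paper takes $h=1$), one gets $\vol(B_1)=\frac{\sqrt{3}}{8}\,a^2/h^2$ and $\vol(B_2)=\frac{\sqrt{3}}{8}\,h^2/r^2$, and equality of volumes together with $0<r<\frac{3a}{4}$ from Corollary~\ref{3lem} yields $\vol(B_i)=\frac{\sqrt{3}}{8}\cdot\frac{a}{r}>\frac{\sqrt{3}}{6}$. The inversion bookkeeping you flagged as the main obstacle is fine; the real obstacle is the adjacency assumption, which cannot be ``nailed down'' because it is not true.
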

\begin{proof}  Let $B_1$ and $B_2$ about vertices $u_1$
and $u_2$, respectively, be vertex neighborhoods which intersect with disjoint
interiors.  Choose coordinates for the upper half-space model of $\HH^3$ so that
$u_1$ is the point at infinity and $B_1$ intersects $B_2$ at Euclidean height
$1$ above the bounding plane.  Let $\Gamma(\PP)$ be the reflection group
generated by $\PP$ and $\GG_{\infty}$ the subgroup fixing the point at
infinity: 
$$\GG_{\infty} = \{ \gamma \in \Gamma(\PP)\, \mid \,
\gamma(\{\infty\})=\{\infty\}\}.$$
  Let $H_1=\GG_{\infty} \cdot B_1$ be the horoball centered
at infinity covering $B_1$ and let $H_2$ be one of the height $1$ horoballs
contained in $\Gamma(\PP) \cdot B_2$.  The projection of $\PP$ to the bounding
plane of $\HH^3$ is an equilateral triangle and the orbit of this triangle under
the action of $\GG_{\infty}$ tiles the plane.  Let $\DD$ be a triangle in
this tiling containing the point of $\overline{\HH^3}$ about which $H_2$ is
centered. 

The collection of height $1$ horoballs covering $B_2$ is equal to $\GG_{\infty}
\cdot H_2$ and, for each pair $g\neq h \in \GG_{\infty},$ either $g H_2 \cap h H_2$ is empty, a
single point, or $g H_2 = h H_2$.  The proof breaks up into three cases.  Either
$u_2 \in \text{int}\,{\DD}$, $u_2$ is contained in the interior of an
edge of $\DD$, or $u_2$ is a vertex of $\DD$.

If $u_2 \in \text{int}\,{\DD},$ then the projection of $H_2$ to the bounding
plane must be a closed disk contained in $\DD$.  The minimum possible value of
$\vol(B_1)=\vol(B_2)$ occurs when the projection of $H_2$ to the bounding plane
is inscribed in $\DD$ and $\DD$ has edge length $\sqrt{3}$, as shown in
figure~\ref{baryball}.  Hence the area of $\DD$ is $\frac{3 \sqrt{3}}{4}$ and 
$$\vol(B_i)=\frac{3 \sqrt{3}}{4}\int_{1}^{\infty} \frac{dz}{z^3} =
\frac{3\sqrt{3}}{8}.$$  

\begin{figure}
\begin{center}
\scalebox{.65}{\includegraphics{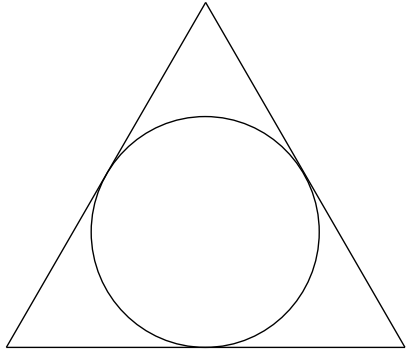}}
\end{center}
\caption{Projection of $H_2$ inscribed in $\DD$ \label{baryball}}
\end{figure}

If $u_2$ is contained in the interior of an edge of $\DD$, then the minimum
possible value of the vertex neighborhood volume occurs when $\DD$ has
edge length $\frac{2\sqrt{3}}{3}$ and $u_2$ is at the midpoint of an edge of
$\DD$, as in figure~\ref{edgeball}.  Calculating as above, 
$$\vol(B_i)=\frac{\sqrt{3}}{6}.$$

\begin{figure}
\begin{center}
\scalebox{.55}{\includegraphics{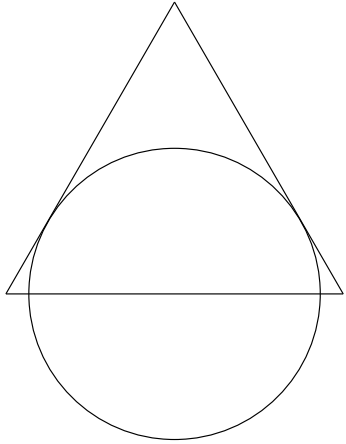}}
\end{center}
\caption{Projection of $H_2$ for case $u_2$ contained in the interior of an
edge of $\DD$ \label{edgeball}}
\end{figure}

Now suppose that $u_2$ is a vertex of $\DD$.  If the edge length of $\DD$ is
$a$, then 
$$\vol(B_1)=\frac{a^2 \sqrt{3}}{4}\int_{1}^{\infty} \frac{dz}{z^3}=\frac{a^2
\sqrt{3}}{8}.$$
If $r$ is as in Corollary~\ref{3lem}, then 
$$\vol(B_2)=\frac{\sqrt{3}}{8 r^2}.$$
By Corollary~\ref{3lem}, $0<r<\frac{3a}{4}$, so 
$$\vol(B_2)> \frac{2 \sqrt{3}}{9a^2}.$$
Equating this lower bound with $\vol(B_1)$ yields $a= \frac{2\sqrt{3}}{3}$.
Therefore it may concluded that
$$\vol(B_i)>\frac{\sqrt{3}}{6}.$$
\end{proof}

To complete the proof of Theorem~\ref{pi3prop}, start with disjoint, equal
volume vertex neighborhoods at each vertex.  Expand the vertex
neighborhoods so that the volumes remain equal at all time until two of the
vertex neighborhoods intersect with disjoint interior intersection.
Lemma~\ref{bump} then says that there is a vertex neighborhood at each vertex of volume
at least $\sqrt{3}/6$.  B\"or\"oczky and Florian in \cite{boro} show that the
maximal density of a horoball packing in $\HH^3$ is ${\sqrt{3}}/(2V_3)$.  Applying
this result gives 
$$\vol(\PP) > N\cdot \frac{V_3}{3}.$$  

In a personal communication, Rivin has indicated how to improve the lower bound
to $N\cdot \frac{3V_3}{8}.$  The idea of the argument is that for any given
vertex $v$ in a $\pi/3$-equiangular polyhedron $\PP$, $v$ along with the three
vertices of $\PP$ with which $v$ shares an edge are the vertices of a regular
ideal hyperbolic tetrahedron contained in $\PP$.  A collection of such
tetrahedra with disjoint interiors may be constructed by taking any independent
set of vertices of $\PP$.  By a result of Heckman and Thomas, a trivalent graph
with $N$ vertices contains an independent set of cardinality at least
$\frac{3N}{8}$ \cite{indset}.

\section{The Upper Volume Bounds}\label{S:upper}

In this section, the upper volume bounds in Theorems~\ref{theorempi2},
\ref{pi2finite}, \ref{pi2general}, and \ref{theorempi3} will be proved using 
 arguments inspired by an argument of Agol and D.  Thurston for an upper bound
on the volume of an alternating link complement \cite{lackenby}.  First, a
decomposition of an arbitrary non-obtuse hyperbolic polyhedron into tetrahedra
will be described.  In each case, the volume contributed by the tetrahedra
meeting at each vertex will be analyzed to obtain the volume bounds. 

Let $\PP$ be a non-obtuse hyperbolic polyhedron and $v_0$ a vertex of $\PP$.
For each face, $A_i$, not containing $v_0$, let $\gamma_i$ be the unique geodesic
orthogonal to $A_i$ which passes through or limits to $v_0$, where $v_0$ is a
finite or ideal vertex respectively.  Define the nearest point projection,
$u_i$, of $v_0$ to $A_i$ to be the intersection of $\gamma_i$ with $A_i$.  The
projection $u_i$ will lie on the interior of $A_i$ unless $A_i$ meets one of the
faces containing $v_0$ orthogonally, in which case, $u_i$ will lie in the
interior of an edge of $A_i$ or will coincide with a vertex of $A_i$ if $A_i$
meets two faces containing $v_0$ orthogonally.
Cyclically label the vertices of $A_i$ by $v_{i,j}$ where  $j\in \{ 1,\, 2, \dots
\deg(A_i)\}$ is taken modulo $\deg(A_i)$.    Let $w_{i,j}$ be the nearest point
projection of $u_i$ onto the edge of $A_i$ with endpoints $v_{i,j}$ and
$v_{i,j+1}$, where the nearest point projection is defined as above.  Each face
of a non-obtuse polyhedron is a non-obtuse polygon, so the nearest point
projection of any point in $A_i$ to an edge $A_i$ actually lies in $A_i$.  See
figure~\ref{figure:tri}.

\begin{figure} 
\labellist
\small\hair 2pt
\pinlabel $u_i$ at 103 502
\pinlabel $v_{i,1}$ [b] at 106 563 
\pinlabel $v_{i,2}$ [bl] at 175 543 
\pinlabel $v_{i,3}$ [l] at 193 496
\pinlabel $v_{i,4}$ [t] at 154 432
\pinlabel $v_{i,5}$ [tr] at 70 450
\pinlabel $v_{i,6}$ [r] at 53 497
\pinlabel $w_{i,1}$ [b] at 143 554
\pinlabel $w_{i,2}$ [bl] at 184 518
\pinlabel $w_{i,3}$ [tl] at 174 464
\pinlabel $w_{i,4}$ [t] at 111 440
\pinlabel $w_{i,5}$ [r] at 62 474
\pinlabel $w_{i,6}$ [br] at 79 529
\pinlabel $u_i$ [r] at 266 499
\pinlabel $v_{i,1}$ [br] at 268 547 
\pinlabel $v_{i,2}$ [b] at 329 570
\pinlabel $v_{i,3}$ [l] at 390 510
\pinlabel $v_{i,4}$ [t] at 336 433
\pinlabel $v_{i,5}$ [tr] at 266 458
\pinlabel $w_{i,1}$ [b] at 285 555
\pinlabel $w_{i,2}$ [bl] at 345 556
\pinlabel $w_{i,3}$ [tl] at 343 444
\pinlabel $w_{i,4}$ [tr] at 288 449
\endlabellist
\begin{center}
\scalebox{1}{\includegraphics{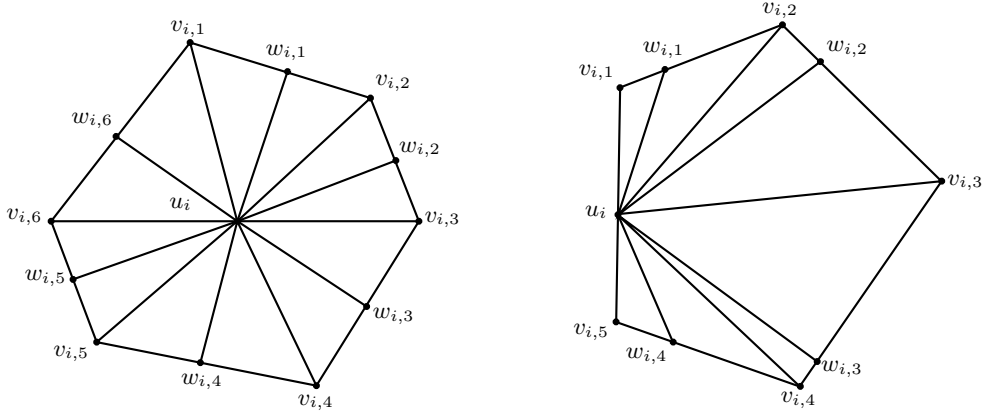}} 
\end{center}
\caption{The figure on the left shows the case where $u_i$ is in the interior of
a face. The figure on the right is the case where $u_i$ is in the interior of
an edge.} 
\label{figure:tri} 
\end{figure}

Define $\Delta(i,j)$ to be the tetrahedron with vertices $v_0$, $u_i$,
$w_{i,j}$, and $v_{i,j}$ and $\Delta'(i,j)$ to be the tetrahedron with vertices $v_0$, $u_i$,
$w_{i,j}$, and $v_{i,j+1}$.  In the case where $u_i$ coincides with $w_{i,j}$,
both $\Delta(i,j)$ and $\Delta'(i,j)$ will be degenerate tetrahedra.   Let
$\mathcal{I}$ be the set of $(i,j)$ such that $\Delta(i,j)$ and $\Delta'(i,j)$
are nondegenerate.   For each $(i,j)\neq (i',j') \in \mathcal{I}$,
$\text{Int}(\Delta(i,j))\cap \text{Int}(\Delta(i',j'))=\emptyset$ and $\text{Int}(\Delta'(i,j))\cap
\text{Int}(\Delta'(i',j'))=\emptyset.$  Also, the interior of each $\Delta$ is
disjoint from the interior of each $\Delta'$.   Then
$$\PP= \bigcup_{(i,j) \in \mathcal{I}} \left( \Delta(i,j) \cup \Delta'(i,j)
\right).$$

This decomposition of $\PP$ into tetrahedra will be analyzed to prove each of the upper bounds in Theorems~\ref{theorempi2},
\ref{pi2finite}, \ref{pi2general}, and \ref{theorempi3}.  The following
technical lemma is needed.  It follows directly from the fact that the
Lobachevsky function is concave down on the interval $[0,\pi/2]$. 

\begin{lemma}\label{maxlem}
Suppose $\overrightarrow{\alpha}=(\alpha_1, \dots, \alpha_M)$ where
$\alpha_i\in[0,\pi/2]$.  Let  
$$f(\overrightarrow{\alpha})=\frac{1}{2}\sum_{i=1}^{M} \Lambda(\pi/2-\alpha_i)$$ 
and $g(\overrightarrow{\alpha})=\alpha_1+ \cdots + \alpha_M$.  Then the maximum
value of $f(\overrightarrow{\alpha})$ subject to the constraint
$g(\overrightarrow{\alpha})=C$ for some constant $C \in [0, M\pi/2]$ occurs for
$\overrightarrow{\alpha}=(C/M, \dots, C/M)$.
\end{lemma}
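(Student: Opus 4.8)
The plan is to prove this as a constrained optimization problem using the method of Lagrange multipliers, exploiting the concavity of $\Lambda$ on $[0,\pi/2]$. First I would observe that the feasible region, the simplex $\{\overrightarrow{\alpha} : \alpha_i \in [0,\pi/2],\ \sum_i \alpha_i = C\}$, is compact and convex, and that $f$ is continuous, so a maximum is attained. The key structural fact is that $\Lambda$ is concave down (that is, strictly concave) on $[0,\pi/2]$; since $\alpha_i \in [0,\pi/2]$ implies $\pi/2 - \alpha_i \in [0,\pi/2]$, each summand $\Lambda(\pi/2 - \alpha_i)$ is a concave function of $\alpha_i$, and a sum of concave functions is concave. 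Hence $f$ is a concave function on the convex domain, which guarantees that any critical point of the Lagrangian is automatically a global maximum and that the maximizer is unique.

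Next I would set up the Lagrangian $L(\overrightarrow{\alpha},\lambda) = f(\overrightarrow{\alpha}) - \lambda\,(g(\overrightarrow{\alpha}) - C)$ and compute the stationarity conditions. Using $\Lambda'(\theta) = -\log|2\sin\theta|$, differentiating with respect to $\alpha_i$ gives
$$\frac{\partial L}{\partial \alpha_i} = -\frac{1}{2}\,\Lambda'(\pi/2 - \alpha_i) - \lambda = \frac{1}{2}\log\bigl|2\sin(\pi/2 - \alpha_i)\bigr| - \lambda = 0$$
for each $i$. This forces $\sin(\pi/2 - \alpha_i)$ to take the same value for all $i$, i.e.\ $\cos\alpha_i$ is constant across $i$; since $\alpha_i \in [0,\pi/2]$ where cosine is injective, all the $\alpha_i$ are equal. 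Combined with the constraint $\sum_i \alpha_i = C$, this yields $\alpha_i = C/M$ for every $i$, which is the claimed maximizer.

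The main subtlety, and the step I would treat most carefully, is the boundary of the simplex: Lagrange multipliers locate interior critical points, but the constrained maximum could a priori sit on the boundary where some $\alpha_i \in \{0, \pi/2\}$. Here concavity does the heavy lifting. Because $f$ is concave and the constraint set is convex, the global maximum over the whole feasible region coincides with the unique stationary point whenever that stationary point lies in the feasible set; and $(C/M,\dots,C/M)$ is feasible precisely because $C \in [0, M\pi/2]$ guarantees $C/M \in [0,\pi/2]$. Thus no separate boundary analysis is required once concavity is invoked. I would also note for cleanliness that a strictly concave function on a convex set has at most one maximizer, pinning down $(C/M,\dots,C/M)$ as the unique maximum. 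The whole argument is short, so rather than belabor the Lagrange computation I would simply state that symmetry together with strict concavity forces all coordinates equal, and that feasibility of the symmetric point completes the proof.
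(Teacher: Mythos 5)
Your proof is correct and rests on exactly the same key fact the paper invokes, namely that $\Lambda$ is concave on $[0,\pi/2]$ (the paper offers no further argument, simply asserting the lemma ``follows directly'' from this concavity). Your Lagrange-multiplier elaboration is a valid way to flesh that out, though concavity alone already gives the result in one line via Jensen's inequality: $\frac{1}{M}\sum_i \Lambda(\pi/2-\alpha_i) \leq \Lambda\bigl(\pi/2 - \tfrac{1}{M}\sum_i \alpha_i\bigr) = \Lambda(\pi/2 - C/M)$.
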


The next proposition gives the upper bound in Theorem~\ref{theorempi2}.
\begin{proposition}\label{upper2}
If $\PP$ is an ideal $\pi/2$--equiangular polyhedron with $N$ vertices, then
$$\vol(\PP) \leq (N-4) \cdot \frac{V_8}{2},$$
where $V_8$ is the volume of the regular ideal hyperbolic octahedron.
Equality is achieved when $\PP$ is the regular ideal hyperbolic octahedron.
\end{proposition}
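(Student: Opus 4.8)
The plan is to fix an ideal vertex $v_0$ of $\PP$ and to analyze the tetrahedral decomposition of $\PP$ from $v_0$ just constructed. Since $\PP$ is ideal and $\pi/2$--equiangular, Andreev's theorem makes every vertex $4$--valent, so $v_0$ lies on exactly four faces $F_1,\dots,F_4$, and Euler's formula gives $\PP$ a total of $N+2$ faces; the remaining $N-2$ faces are those not containing $v_0$. Passing to the upper half--space model with $v_0$ at $\infty$, each face $A_i$ not containing $v_0$ is a hemisphere, its foot $u_i$ is the top of that hemisphere, and the tetrahedra $\Delta(i,j),\Delta'(i,j)$ group naturally according to the edges of $A_i$. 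The tetrahedra associated to an edge $e$ of $A_i$ degenerate precisely when $u_i$ lies on $e$, which (because the vertical plane through $e$ is then forced to be a face of $\PP$ through $v_0$) happens exactly when $e$ is shared with one of the $F_k$. Hence the only nondegenerate tetrahedra come from edges both of whose faces avoid $v_0$.

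For each such edge I would compute the volume contribution explicitly. A direct integration in the model, together with the duplication identity $\Lambda(2\theta)=2\Lambda(\theta)-2\Lambda(\pi/2-\theta)$, shows that a nondegenerate tetrahedron has volume $\tfrac12\Lambda(\phi)$, where $\phi\in(0,\pi/2)$ is the dihedral angle along $e$ between the face $A_i$ and the vertical plane through $e$ and $v_0$; moreover $\Delta(i,j)$ and $\Delta'(i,j)$ share this same $\phi$, so the pair contributes $\Lambda(\phi)$. The crucial point, and where $\pi/2$--equiangularity enters, is that the \emph{single} vertical plane through $e$ and $v_0$ serves both faces meeting along $e$: if those faces are $A_i$ and $A_{i'}$ with angles $\phi$ and $\phi'$ to this plane, then $\phi+\phi'$ equals the dihedral angle of $\PP$ along $e$, namely $\pi/2$. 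Thus the total contribution of $e$ is $\Lambda(\phi)+\Lambda(\phi')$ with $\phi+\phi'=\pi/2$, and Lemma~\ref{maxlem} (equivalently, concavity of $\Lambda$ on $[0,\pi/2]$) bounds this by $2\Lambda(\pi/4)=V_8/4$, with equality iff $\phi=\phi'=\pi/4$.

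Summing over edges gives $\vol(\PP)\le 2\Lambda(\pi/4)\cdot C$, where $C$ is the number of edges both of whose faces avoid $v_0$, so it remains to show $C\le 2(N-4)$. I would count combinatorially: with $E=2N$ edges, the four edges through $v_0$ together with the edges lying on some $F_k$ but not through $v_0$ account for all but the $C$ edges. Because the star of $v_0$ is a topological disk, no two of the faces $F_k$ can meet along a second edge, so the non--$v_0$ edges on the $F_k$ are precisely the boundary cycle of this disk; that cycle runs through the four neighbors of $v_0$ and hence has at least four edges, with equality exactly when each $F_k$ is a triangle. Subtracting yields $C=2N-4-(\#\text{boundary edges})\le 2N-8=2(N-4)$, and therefore $\vol(\PP)\le 4(N-4)\Lambda(\pi/4)=(N-4)\cdot V_8/2$. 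Both inequalities become equalities for the regular ideal octahedron, where all $\phi=\pi/4$ and all four faces at $v_0$ are triangles.

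The main obstacle is the volume identity $\tfrac12\Lambda(\phi)$ together with the additivity $\phi+\phi'=\pi/2$: this is the step that genuinely uses the right--angle hypothesis, and it must be arranged so that the same vertical plane is the cutting plane from both sides of every edge. Once that is established the concavity estimate is immediate from Lemma~\ref{maxlem}, and the Euler/star--disk count is routine; the only point requiring care there is checking that the count is sharp (all four faces at $v_0$ triangular), so that the regular ideal octahedron realizes equality.
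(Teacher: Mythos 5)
Your argument is correct, and it rests on exactly the same foundation as the paper's proof: the same cone decomposition from $v_0$ into the tetrahedra $\Delta(i,j),\Delta'(i,j)$, the same volume formula $\tfrac12\Lambda(\phi)$ for each nondegenerate tetrahedron, and the same concavity estimate (Lemma~\ref{maxlem}). The difference is in the bookkeeping, and it is a genuine one. The paper groups the tetrahedra around each vertex $v\neq v_0$: a vertex off the star of $v_0$ lies in eight tetrahedra whose angles $\alpha_l$ along the geodesic from $v$ to $v_0$ sum to $2\pi$, giving at most $V_8/2$ per such vertex, with a separate computation showing each of the four neighbors of $v_0$ contributes at most $V_8/8$; the total is $(N-1)\tfrac{V_8}{2}-4\cdot\tfrac{3V_8}{8}=(N-4)\tfrac{V_8}{2}$. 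You instead group the four tetrahedra along each edge $e$ disjoint from the star of $v_0$ and use the constraint $\phi+\phi'=\pi/2$, which is where the right-angle hypothesis enters your argument at every edge; note that the paper's $2\pi$ constraint at a far vertex holds for any ideal polyhedron with four-valent vertices and is in fact the sum of your four edge constraints around that vertex, so the two estimates are consistent and yield the same numerical bound once your edge count $C=2N-4-B\le 2(N-4)$ (with $B\ge 4$ boundary edges of the star) is combined with $2\Lambda(\pi/4)=V_8/4$ per edge. Your localization is arguably cleaner here and makes the equality case transparent; the paper's vertex-based version has the advantage that it transfers with little change to Propositions~\ref{upper3}, \ref{upper2finite}, and \ref{upper2general}, where the per-vertex cone is compared to a fraction of a regular ideal solid. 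Two small points to tighten: the claim that no two faces at $v_0$ share a second edge should be justified by convexity (two faces of a convex polyhedron meet in at most one edge or vertex) rather than by the star being a disk, which is what that fact establishes; and the identification of the degenerate tetrahedra should note that $u_i$ can also land on a vertex of $A_i$ when $A_i$ meets two faces containing $v_0$, which degenerates the tetrahedra of both incident edges but does not affect your count since those edges lie on the boundary of the star in any case.
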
 

\begin{proof}
Decompose $\PP$ as above.  Suppose that $v=v_{i,j}$ is a vertex of $\PP$ which
is not contained in a face containing $v_0$.  Then $v$ is contained in exactly
eight tetrahedra of the decomposition, say
$T_1, \dots, T_8$. Suppose that $T_l$ coincides with $\Delta(m,n)$ in the
decomposition.  Then $T_l$ is a tetrahedron with $2$ ideal vertices, $v_0$ and
$v$, and two finite vertices, $u_m$ and $w_{m,n}$.  The dihedral angles along
the edges between $v$ and $u_m$, between $u_m$ and $w_{m,n}$, and between
$w_{m,n}$ and $v_0$ are all $\pi/2$.  Suppose that the dihedral
angle along the edge between $v$ and $v_0$ is $\alpha_l$.  Then the dihedral
angles along the remaining two edges are $\pi/2-\alpha_l$.  See
figure~\ref{figure:tet}.   

\begin{figure} 
\labellist
\small\hair 2pt
\pinlabel $u_m$ [l] at 192 170
\pinlabel $w_{m,n}$ [tl] at 169 126
\pinlabel $\alpha_l$ [l] at 84 177
\pinlabel $\frac{\pi}{2}$ [l] at 168 177
\pinlabel $\frac{\pi}{2}-\alpha_l$ [l] at 192 190
\pinlabel $v$ [tr] at 84 46
\endlabellist
\begin{center}
\scalebox{1}{\includegraphics{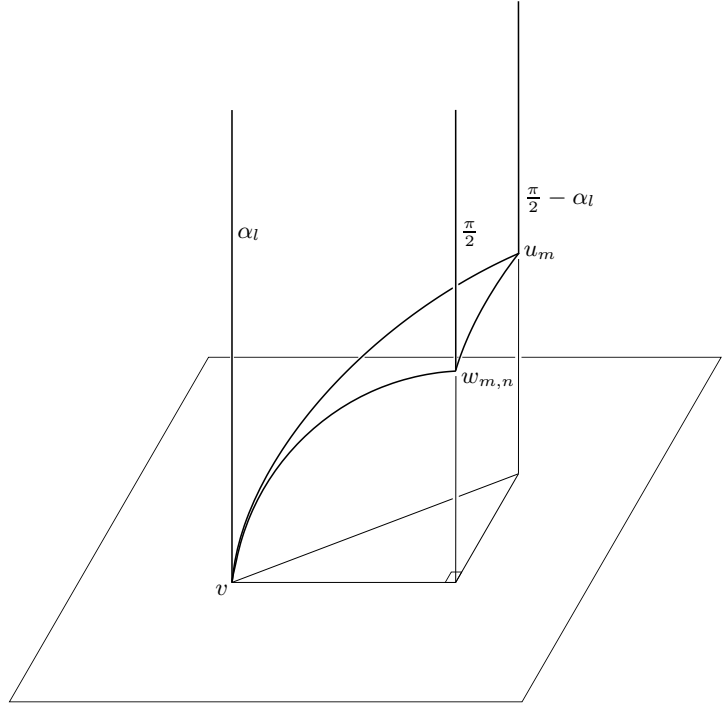}} 
\end{center}
\caption{One of the $T_l$} 
\label{figure:tet} 
\end{figure}

The volume of $T_l$ is given by $\Lambda(\pi/2-\alpha_l)/2$, where the
Lobachevsky function, $\Lambda,$ is defined as 
$$\Lambda(\theta)=-\int_0
^{\theta} \log |2 \sin(t) | \, dt.$$ 
Therefore, the
volume contributed by the tetrahedra adjacent to the vertex $v$ is a function of
$\overrightarrow{\alpha}=(\alpha_1, \alpha_2, \dots, \alpha_8)$:
$$f(\overrightarrow{\alpha})= \frac{1}{2} \sum_{l=1}^{8}
\Lambda(\pi/2-\alpha_l).$$

 The $\alpha_l$ must sum to $2\pi$.  The maximum of
$f(\overrightarrow{\alpha})$, subject to this constraint, occurs when
$\overrightarrow{\alpha}=(\pi/4, \pi/4, \dots, \pi/4)$, by Lemma~\ref{maxlem}.  Gluing $16$ copies of
$T_l$ with $\alpha_l=\pi/4$ together appropriately yields  a regular ideal
hyperbolic octahedron.  Hence $f(\pi/4, \pi/4, \dots, \pi/4) = V_{8}/2.$

Only two tetrahedra in the decomposition meet each of the four vertices which
share an edge with $v_0$.  A similar analysis as above shows that the
volume contributed by the tetrahedra at each of these four vertices is no more
than $V_{8}/8$.  

Therefore, accounting for the vertex, $v_0$, at infinity and the fact that only
$V_8 /8$ is contributed by each of the tetrahedra at the vertices adjacent to
$v_0$,
$$\vol(\PP) \leq (N-1)\cdot \frac{V_8}{2} - 4 \cdot
3\frac{V_8}{8}=(N-4) \cdot
\frac{V_8}{2}.$$
Equality is clearly achieved when $\PP$ is the regular ideal hyperbolic
octahedron.
\end{proof}

The proof of the upper bound in Theorem~\ref{theorempi3} is similar to the
previous argument.  

\begin{proposition}\label{upper3}
If $\PP$ is an ideal $\pi/3$--equiangular polyhedron with $N$ vertices, then
$$\vol(\PP) \leq (3N-14) \cdot \frac{V_3}{2},$$
where $V_3$ is the volume of the regular ideal hyperbolic tetrahedron.  Equality
is achieved when $\PP$ is the regular ideal hyperbolic cube.
\end{proposition}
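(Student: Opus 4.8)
The plan is to adapt the orthoscheme decomposition of Proposition~\ref{upper2} to the $\pi/3$ setting. First I would fix an (ideal) vertex $v_0$ of $\PP$ and decompose $\PP$ into the tetrahedra $\Delta(i,j),\Delta'(i,j)$ coming from the nearest-point projections of $v_0$ to the faces not containing $v_0$. Each such tetrahedron has two ideal vertices, $v_0$ and one vertex $v_{i,j}$ of $\PP$, and two finite vertices; its three ``projection'' dihedral angles are right angles, so the Euclidean link conditions at the two ideal vertices force its remaining three dihedral angles to be $\alpha,\ \pi/2-\alpha,\ \pi/2-\alpha$, where $\alpha$ is the angle along $v_0v_{i,j}$. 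Hence every tetrahedron has volume $\tfrac12\Lambda(\pi/2-\alpha)$, exactly as in the $\pi/2$ case. Note that because $\PP$ is $\pi/3$-equiangular no two faces meet orthogonally, so unlike the $\pi/2$ case no tetrahedron degenerates and each non-$v_0$ face contributes $2\deg$ tetrahedra.

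Next I would group the tetrahedra by their non-$v_0$ ideal vertex $v$ and record the constraint on the angles $\alpha_l$ along the shared edge $v_0v$. Since every vertex of $\PP$ is trivalent (Theorem~\ref{and3}), there are three cases according to whether $v_0v$ is an honest edge of $\PP$ (so $v$ is one of the three \emph{corners} adjacent to $v_0$), a chord lying inside one of the three faces through $v_0$ (so $v$ is a non-corner vertex of such a face), or an interior diagonal (so $v$ lies on no face through $v_0$). Counting incident faces that actually get coned, the corners meet $2$ tetrahedra with $\sum\alpha_l=\pi/3$, the face vertices meet $4$ with $\sum\alpha_l=\pi$, and the interior vertices meet $6$ with $\sum\alpha_l=2\pi$. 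Applying Lemma~\ref{maxlem} in each case bounds the respective contributions by $\Lambda(\pi/3)=V_3/3$, by $2\Lambda(\pi/4)$, and by $3\Lambda(\pi/6)=\tfrac{3V_3}{2}$, using $V_3=2\Lambda(\pi/6)$ and the distribution identity $\Lambda(\pi/3)=\tfrac13V_3$.

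I would then assemble these estimates with Euler's formula for the trivalent planar graph $\PP^{(1)}$, namely $|E|=\tfrac32N$ and $|F|=\tfrac12N+2$, together with Corollary~\ref{cor4edges}, which guarantees that each of the three faces through $v_0$ has at least four edges and therefore carries at least one non-corner vertex. Bounding every non-corner vertex by the interior value $\tfrac{3V_3}{2}$ and correcting only the three corners already gives the clean but weaker estimate $\vol(\PP)\le(3N-10)\cdot\tfrac{V_3}{2}$, so the real content is in sharpening the constant from $-10$ to $-14$, which amounts to extracting an additional fixed deficit of $2V_3$ concentrated on the vertices lying on the three faces through $v_0$.

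The hard part will be precisely this last improvement, and it is exactly what is needed for sharpness in Theorem~\ref{theorempi3}. A naive per-vertex maximization is \emph{not} sharp: in the extremal polyhedron the angles at the face vertices are pinned by the global hyperbolic geometry below the values that would locally maximize their contribution. Indeed, for the regular ideal cube ($N=8$) one checks directly that an interior vertex realizes $\alpha_l=\pi/3$ and a corner realizes $\alpha_l=\pi/6$, but each face vertex is forced to the split $\{\pi/6,\pi/3,\pi/6,\pi/3\}$ rather than the locally optimal $\alpha_l=\pi/4$, so its contribution is $\Lambda(\pi/6)+\Lambda(\pi/3)=\tfrac{5V_3}{6}$ instead of $2\Lambda(\pi/4)$; summing gives $\tfrac{3V_3}{2}+3\cdot\tfrac{V_3}{3}+3\cdot\tfrac{5V_3}{6}=5V_3=(3N-14)\cdot\tfrac{V_3}{2}$. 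To make this rigorous for all $\PP$ I would use, in addition to the per-edge constraints above, the fact that the angles $\pi/2-\alpha_l$ at each projection point sum to $2\pi$ around the interior edge $v_0u_i$; organizing the maximization over these coupled face-and-vertex constraints, rather than vertex by vertex, is the delicate step that should yield the coefficient $-14$ and certify equality for the regular ideal hyperbolic cube.
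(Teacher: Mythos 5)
Your outline reproduces the paper's strategy almost exactly: the same cone-to-$v_0$ decomposition, the same three vertex types with $6$, $4$, and $2$ incident tetrahedra, the same angle sums $2\pi$, $\pi$, $\pi/3$, the correct corner bound $\Lambda(\pi/3)=V_3/3$ via Lemma~\ref{maxlem}, and the correct bookkeeping showing that the per-vertex-optimal bound only yields $(3N-10)\cdot V_3/2$. The genuine gap is precisely the step you defer as ``delicate'': you never prove that a vertex $v_2$ lying on a face through $v_0$ but not adjacent to $v_0$ contributes at most $5V_3/6$. The mechanism you propose for closing it --- a coupled maximization using the constraint that the angles $\pi/2-\alpha_l$ sum to $2\pi$ around each edge $v_0u_i$ --- is not carried out, and it is not the route the paper takes; as it stands your argument proves only the weaker $-10$ constant, which does not suffice (note $2\Lambda(\pi/4)>5V_3/6$, so the naive face-vertex bound really does lose the needed $2V_3$).

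The paper closes the gap with a purely local observation at $v_2$, with no global coupling at all. The link of $v_2$ in $\PP$ is a Euclidean equilateral triangle, and two of its three corners correspond to the two edges of $\PP$ at $v_2$ lying in the face $F$ that contains $v_0$. Since $F$ is not coned, each of those two corners is entirely contained in a single tetrahedron $T_3$ (resp.\ $T_4$) of the decomposition, whose link triangle at $v_2$ therefore has angles $\pi/2$, $\pi/3$, and $\alpha_i$; as these must sum to $\pi$, one gets $\alpha_3=\alpha_4=\pi/6$ exactly --- not as an extremal choice but as a forced value. Only $\alpha_5+\alpha_6=2\pi/3$ remains free, and Lemma~\ref{maxlem} applied to those two angles gives a total contribution of $\Lambda(\pi/3)+\Lambda(\pi/6)=\tfrac{5V_3}{6}$, which is the split $\{\pi/6,\pi/6,\pi/3,\pi/3\}$ you observed for the cube. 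Corollary~\ref{cor4edges} guarantees at least one such $v_2$ on each of the three faces through $v_0$, supplying the additional deficit of $3\cdot\tfrac{2V_3}{3}=2V_3$ and hence the coefficient $-14$. With this pinning argument inserted, the rest of your proposal goes through as written.
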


\begin{proof}
Decompose $\PP$ as described at the beginning of this section.
Each vertex of $\PP$ which is not contained in a face containing $v_0$ is a
vertex of exactly six tetrahedra of the decomposition.  Lemma~\ref{maxlem}
implies that the sum of the volumes of the six tetrahedra around such a vertex
is no more than $3V_3/2$.

If $v_1$ is one of the three vertices adjacent to $v_0$, then $v$ is a vertex of
two tetrahedra, $T_1$ and $T_2$, say.  The sum of the volumes of $T_1$ and $T_2$
is at most $V_3/3$ when $\alpha_1=\alpha_2=\pi/6$, again by Lemma~\ref{maxlem}.

By Corollary~\ref{cor4edges} each face containing $v_0$ has degree at least $4$.
If $v_2$ is a vertex of such a face which does not share an edge with $v_0$,
then $v_2$ is a vertex of four tetrahedra of the decomposition of $\PP$. See
figure~\ref{figure:edgevert}.

\begin{figure} 
\labellist
\small\hair 2pt
\pinlabel $v_2$ [br] at 331 289
\pinlabel $T_3$ at 358 303
\pinlabel $T_4$ at 330 270
\pinlabel $T_5$ [t] at 360 246
\pinlabel $T_6$ at 363 277
\endlabellist
\begin{center}
\scalebox{.7}{\includegraphics{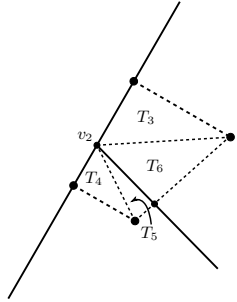}} 
\end{center}
\caption{A view of $v_2$ as seen from $v_0$.  The solid lines are edges of $\PP$, and the dashed lines are edges of
tetrahedra which are not also edges of $\PP$.} 
\label{figure:edgevert} 
\end{figure}

The link of $v_2$ intersected with each of $T_i$, $i = 3,4$  is a Euclidean
triangle with angles $\pi/2,$ $\pi/3$ and $\alpha_i$.  Hence $\alpha_3 =
\alpha_4 = \pi/6$.  Using
Lemma~\ref{maxlem} and the fact that
$\alpha_5+\alpha_6=2\pi/3$, the sum of the volumes of these four tetrahedra is
seen to have a maximum value of $5V_3/6$.

The upper bound is computed by assuming that the volume contributed by the
tetrahedra containing each vertex other than $v_0$ is $3V_3/2$ and subtracting
the excess for each of the three vertices which share an edge with $v_0$ and for
the three vertices described in the previous paragraph:
$$\Vol(\PP) \leq \left( (N-1)\frac{3}{2} - 3\left(\frac{7}{6}
\right) -
3\left(\frac{2}{3}\right)\right) \cdot
V_3=\left(\frac{3N-14}{3}\right)\cdot \frac{3V_3}{2}.$$

The regular ideal hyperbolic cube has $N=8$ and volume $5V_3$.
\end{proof}

The proofs of Theorems~\ref{pi2finite} and \ref{pi2general} require different
methods than the previous two theorems because Lemma~\ref{maxlem} does not
apply.  The volume of the tetrahedra into which $\PP$ is decomposed is given
by the sum of three Lobachevsky functions, so the simple Lagrange multiplier
analysis fails.  The next lemma will play the role of Lemma~\ref{maxlem} in what
follows.

\begin{lemma}\label{cube}
The regular ideal hyperbolic cube has largest volume among
all ideal polyhedra with the same combinatorial type.
\end{lemma}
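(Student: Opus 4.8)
The plan is to prove the lemma by combining Rivin's parametrization of ideal polyhedra by their dihedral angles with the concavity of hyperbolic volume in these variables, and then to pin down the maximizer by exploiting the symmetry of the cube. Fix once and for all the combinatorial type of the cube, call the abstract polyhedron $P$, and let $E$ be its edge set. By a theorem of Rivin \cite{rivin}, an ideal hyperbolic polyhedron combinatorially equivalent to $P$ is determined up to isometry by its dihedral-angle assignment $\theta\co E\to(0,\pi)$, and the assignments that arise are exactly those in the set $\mathcal{A}$ cut out by the linear equalities $\sum_{e\ni v}(\pi-\theta_e)=2\pi$ at every vertex $v$, together with the strict linear inequalities $\sum(\pi-\theta_e)>2\pi$ over the prismatic circuits of $P^*$. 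Since every vertex of the cube is trivalent, the vertex equalities read simply $\sum_{e\ni v}\theta_e=\pi$. Thus $\mathcal{A}$ is a bounded convex polytope, on which $\vol$ is a strictly concave function and to whose compact closure $\overline{\mathcal{A}}$ the volume extends continuously.

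Next I would bring in symmetry. The group $G$ of combinatorial automorphisms of the cube acts linearly on edge-labelings; it preserves the equalities and inequalities defining $\mathcal{A}$, and it preserves $\vol$, since two ideal cubes differing by a combinatorial symmetry are isometric. The key fact is that $G$ acts transitively on the edges of the cube, so the only $G$-invariant labeling is the constant one; the vertex equality then forces that constant to equal $\pi/3$. Denote this labeling by $\theta_0$. It is realized by the regular ideal cube, so $\theta_0\in\mathcal{A}$, and it is the unique $G$-fixed point of $\overline{\mathcal{A}}$.

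To conclude, I would argue that $\theta_0$ is the maximizer. Because $\vol$ is continuous on the compact set $\overline{\mathcal{A}}$, its maximum is attained, and by concavity the set $S$ of maximizers is convex; it is $G$-invariant since both $\vol$ and $\overline{\mathcal{A}}$ are. As $G$ acts by affine maps, the barycenter of $S$ is a $G$-fixed point of $\overline{\mathcal{A}}$, hence equals $\theta_0$, and it lies in $S$ by convexity; therefore $\theta_0$ maximizes $\vol$. Strict concavity on $\mathcal{A}$ then upgrades this to a strict maximum: for any other ideal cube $\theta\in\mathcal{A}$, comparing $\vol(\theta_0)$ with the value at the midpoint of $\theta_0$ and $\theta$ yields $\vol(\theta_0)>\vol(\theta)$. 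Hence the regular ideal cube is the unique ideal polyhedron of cube type of largest volume.

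The substantive input here is Rivin's strict concavity of the volume as a function of the dihedral angles; that is the hard part, and I would invoke it rather than reprove it. Once it is granted, the remaining work is elementary: the edge-transitivity of the symmetry group of the cube identifies the only candidate critical point, and the short convexity/barycenter argument shows this symmetric point is in fact the global maximum. A minor point to verify is that $\theta_0$ satisfies the prismatic inequalities strictly (so that it genuinely lies in $\mathcal{A}$ and corresponds to an honest, nondegenerate ideal cube), which follows from the existence of the regular ideal cube itself.
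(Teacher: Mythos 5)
Your argument is sound in outline but takes a genuinely different, and much heavier, route than the paper. The paper's proof is a two-line elementary argument: any ideal polyhedron $\QQ$ of cube type is convex, so choosing four pairwise non-adjacent vertices decomposes $\QQ$ into five ideal tetrahedra (one ``central'' one plus four corner ones); each has volume at most $V_3$ by the classical maximality of the regular ideal tetrahedron, so $\vol(\QQ)\leq 5V_3$, which the regular ideal cube attains. Your proof instead invokes Rivin's parametrization of ideal polyhedra by dihedral angles together with the strict concavity of volume on the resulting angle polytope, and then uses edge-transitivity of the cube's symmetry group to identify the constant labeling $\theta_e\equiv\pi/3$ as the maximizer. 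What your approach buys is generality and uniqueness: it would prove the analogous statement for any edge-transitive combinatorial type (e.g.\ the octahedron) and shows the maximizer is unique, neither of which the paper's decomposition gives directly. What it costs is that the concavity of volume in the dihedral angles is a substantial theorem in its own right --- considerably deeper than the fact about the regular ideal tetrahedron --- and you should pin down a precise citation for it before relying on it (also, Rivin's circuit conditions run over all non-facial circuits of $P^*$, not only prismatic ones, though this does not affect your argument since you only need convexity of $\mathcal{A}$ and membership of $\theta_0$). Finally, your passage through $\overline{\mathcal{A}}$ and the barycenter of the maximizer set quietly assumes the volume extends upper semi-continuously to the boundary so that the maximum is attained there; you can sidestep this entirely by averaging over the group directly: for any $\theta\in\mathcal{A}$, concavity gives $\vol(\theta_0)=\vol\bigl(\tfrac{1}{|G|}\sum_{g\in G}g\theta\bigr)\geq\tfrac{1}{|G|}\sum_{g\in G}\vol(g\theta)=\vol(\theta)$, with strictness unless $\theta=\theta_0$.
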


\begin{proof}
Any ideal polyhedron, $\QQ$, with the combinatorial type of the cube can be
decomposed into five ideal tetrahedra as follows:  Let $v_1, v_2, v_3, v_4$ be a
collection of vertices of $\QQ$ so that no two share an edge.  The five ideal
tetrahedra consist of the tetrahedron with vertices $v_1, v_2, v_3,$ and $v_4$,
and the four tetrahedra with vertices consisting of $v_i$ along with the three
adjacent vertices, for $i=1,2,3,4$.

Then since the regular ideal tetrahedron is the ideal tetrahedron of maximal
volume, $\vol(\QQ)\leq 5V_3$.  The regular ideal hyperbolic cube is decomposed
into five copies of the regular ideal tetrahedron when the above decomposition
is applied, so has volume $5V_3$.
\end{proof}

The next proposition proves the upper bound in Theorem~\ref{pi2finite}.
\begin{proposition}\label{upper2finite}
If $\PP$ is a $\pi/2$--equiangular compact hyperbolic polyhedron with $N$
vertices, then 
$$\vol(\PP) < (N-10)\cdot \frac{5V_3}{8}.$$
\end{proposition}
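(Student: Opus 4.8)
The plan is to reuse the tetrahedral decomposition of $\PP$ from a vertex $v_0$ set up at the start of this section, but to bound the volume contributed at each vertex using Lemma~\ref{cube} in place of Lemma~\ref{maxlem}. Since $\PP$ is compact, Andreev's theorem forces every vertex to have degree $3$, so each tetrahedron $\Delta(i,j)$ or $\Delta'(i,j)$ has $v_0$ together with exactly one further vertex $v_{i,j}$ of $\PP$ among its four vertices. Assigning each tetrahedron to that vertex $v_{i,j}$ partitions the entire decomposition among the $N-1$ vertices of $\PP$ other than $v_0$. A generic vertex $v$, one lying on no face through $v_0$, then receives exactly six tetrahedra, all sharing the edge $v_0v$; I denote their union by $S_v$. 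The essential difference from Propositions~\ref{upper2} and \ref{upper3} is that all four vertices of each of these tetrahedra are now finite, so $\vol(S_v)$ is a sum of general orthoscheme volumes rather than of single Lobachevsky terms, and the concavity argument behind Lemma~\ref{maxlem} is unavailable.

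To bound $\vol(S_v)$, I would idealize: pushing each auxiliary vertex $u_i$ and $w_{i,j}$, together with $v_0$ and $v$, out to an ideal point along the relevant geodesic strictly enlarges each tetrahedron and hence strictly enlarges $\vol(S_v)$. The mutual orthogonality of the three faces of $\PP$ meeting at the right-angled vertex $v$ constrains the resulting ideal configuration, and the content of the argument is that these six idealized tetrahedra fit together as the star of one vertex in a decomposition of an ideal polyhedron combinatorially equivalent to a cube. Lemma~\ref{cube} bounds the volume of any such ideal polyhedron by $5V_3$, and the eightfold vertex-transitive symmetry of the extremal regular ideal cube distributes this equally among its eight vertices, giving a per-vertex bound of $5V_3/8$. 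Consequently
$$\vol(S_v) < \frac{5V_3}{8},$$
with strict inequality because $\PP$, being compact, has no ideal vertex, so the idealization is a genuine enlargement.

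It then remains to account for $v_0$ and for the vertices lying on the three faces through $v_0$. The vertex $v_0$ receives no tetrahedra, and each vertex on a face meeting $v_0$ receives fewer than six, so each such special vertex contributes strictly less than $5V_3/8$. A direct count of the incidences at these vertices, using that $v_0$ and each special vertex have degree $3$, is arranged to show that the combined deficit of $v_0$ and the special vertices, measured against the generic bound $5V_3/8$, equals $9\cdot\frac{5V_3}{8}$. Summing the bound on $\vol(S_v)$ over the $N-1$ vertices other than $v_0$ and subtracting this deficit yields
$$\vol(\PP) < (N-1)\cdot\frac{5V_3}{8} - 9\cdot\frac{5V_3}{8} = (N-10)\cdot\frac{5V_3}{8}.$$

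The main obstacle is the middle step. Unlike the ideal cases, a free idealization of the six finite tetrahedra is far too lossy (the unconstrained idealized star is a hexagonal bipyramid of much larger volume), so the crux is to verify that the orthogonality forced at the right-angled vertex $v$ really does pin the idealized star to one-eighth of an ideal cube, or at least embeds it in an ideal polyhedron of cube combinatorial type to which Lemma~\ref{cube} applies with the factor $5V_3/8$. By comparison, the bookkeeping for the special vertices near $v_0$ is routine, but it must be organized carefully enough to produce exactly the constant $10$.
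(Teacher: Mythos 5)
You have the right overall strategy --- the same decomposition from $v_0$, a bound of $5V_3/8$ on the volume carried by each vertex far from $v_0$ obtained ultimately from Lemma~\ref{cube}, and a count of the exceptional vertices --- but the step you yourself flag as the crux is genuinely missing, and the route you sketch toward it would fail. Idealizing the six tetrahedra at $v$ separately, by pushing all of their vertices out to ideal points, destroys the way adjacent tetrahedra fit together (a shared face gets sent to two different ideal triangles), so there is no single ideal polyhedron whose vertex star the enlarged pieces form; and even granting such a polyhedron of cube type, Lemma~\ref{cube} only bounds its \emph{total} volume by $5V_3$ --- it says nothing about how that volume is apportioned among vertex stars, and the ``eightfold vertex-transitive symmetry'' you invoke is a feature of the regular ideal cube alone, not of the comparison polyhedron you would actually be handed. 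The paper's mechanism is what makes the factor $1/8$ legitimate: leave the six boundary triangles at $v$ exactly where they are, inside the three mutually orthogonal faces of $\PP$ through $v$, and cone them to the single ideal point $\hat v$ obtained by extending the ray from $v$ through $v_0$ to infinity. That cone $\TT$ contains the six tetrahedra, and reflecting $\TT$ in the three orthogonal face planes at $v$ yields eight \emph{isometric} copies tiling an ideal polyhedron $\QQ$ of cube combinatorial type, so $\vol(\TT)=\vol(\QQ)/8\leq 5V_3/8$ by congruence, with no appeal to regularity.

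Your bookkeeping at the end also cannot produce the constant $10$ as written. In the paper that constant comes from Andreev's theorem: every face of a compact right-angled polyhedron has degree at least $5$, so the three faces through $v_0$ carry at least $5\cdot 3-2\cdot 3+1=10$ distinct vertices, leaving at most $N-10$ vertices off those faces. Your plan --- an exact deficit of $9\cdot\frac{5V_3}{8}$ extracted from $v_0$ and the special vertices using only that they are trivalent --- fails on, say, the right-angled dodecahedron, where there are exactly nine special vertices besides $v_0$: your accounting would then force their tetrahedra to contribute zero total volume, which is false. You need the degree-at-least-$5$ input, and you need to handle the near-vertex tetrahedra by absorption or discarding rather than by balancing them against an exact deficit.
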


\begin{proof}
Decompose $\PP$ into tetrahedra as described
at the beginning of this section for some choice of $v_0$.  The volume
of $\PP$ will be bounded above by considering tetrahedra with one ideal vertex.
The reason for using tetrahedra with an ideal vertex to estimate the volume of a
compact polyhedron is that 
$$\max_{v\in \text{Vert}(\PP)} d(v_0,v)$$
can be made arbitrarily large by choosing polyhedra $\PP$ with a large enough
number of vertices.

Suppose that $v=v_{i,j}$ is a vertex of $\PP$ which is not contained in a face
containing $v_0$.  The vertex $v$ is contained in six tetrahedra of the
decomposition.  Consider $S$, the union of the six triangular faces of these
tetrahedra which are contained in the faces of the polyhedron which contain
 $v$.  Let $\hat{v}$ be the point at infinity determined by the geodesic ray
emanating from $v$ and passing through $v_0$.  Define $\TT$ to be the cone of
$S$ to $\hat{v}$.  

The cone, $\TT,$ is an octant of an ideal cube, $\QQ$.  By Lemma~\ref{cube},
$\vol(\QQ) \leq 5V_3.$ Then since $\vol(\QQ) = 8 \vol(\TT),$  $\vol(\TT) \leq
\frac{5V_3}{8}.$

By Andreev's theorem, each face of $\PP$ must be of degree at least $5$.  Hence,
 the three faces of $\PP$ containing $v_0$ contain at least $10$ distinct vertices of
$\PP$, so there are at most $N-10$ vertices that do not share a face with $v_0$.
Therefore the volume of $\PP$ satisfies
$$\vol(\PP) < (N-10)\cdot \frac{5V_3}{8}.$$
\end{proof}

Proposition~\ref{upper2general} combines the techniques of
Proposition~\ref{upper2} and Proposition~\ref{upper2finite} and gives the upper
bound in Theorem~\ref{pi2general}.

\begin{proposition}\label{upper2general}
If $\PP$ is a $\pi/2$--equiangular hyperbolic polyhedron with $N_{\infty}\geq 1$ ideal
vertices and $N_F$ finite vertices, then 
$$\vol(\PP) < (\N_{\infty}-1)\cdot \frac{V_8}{2} +
 N_F\cdot \frac{5 V_3}{8} .$$
\end{proposition}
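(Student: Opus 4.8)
The plan is to combine the vertex-by-vertex volume accounting of Proposition~\ref{upper2} with the ideal-cube estimate of Proposition~\ref{upper2finite}, organizing everything around a single \emph{ideal} apex. Since $N_{\infty}\geq 1$, I would choose $v_0$ to be an ideal vertex of $\PP$ and decompose $\PP$ into the tetrahedra $\Delta(i,j)$ and $\Delta'(i,j)$ exactly as in Section~\ref{S:upper}. Each nondegenerate tetrahedron has $v_0$ as one vertex and exactly one other vertex of $\PP$ (namely $v_{i,j}$ or $v_{i,j+1}$, since $u_i$ and $w_{i,j}$ are projection points, not vertices of $\PP$). Assigning each tetrahedron to this second $\PP$--vertex partitions the decomposition, so that
$$\vol(\PP)=\sum_{v\neq v_0}\vol\bigl(\text{tetrahedra assigned to }v\bigr),$$
and it suffices to bound the contribution of each $v\neq v_0$ according to whether $v$ is ideal or finite.

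For an ideal vertex $v\neq v_0$, Andreev's theorem forces $v$ to have degree $4$, so the tetrahedra assigned to $v$ share the common edge $v_0 v$ and their dihedral angles along that edge sum to $2\pi$. The angle computation of Proposition~\ref{upper2} together with Lemma~\ref{maxlem} then bounds this contribution by $V_8/2$. Summing over the $N_{\infty}-1$ ideal vertices other than $v_0$ produces the term $(N_{\infty}-1)\cdot V_8/2$.

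For a finite vertex $v$, Andreev's theorem forces degree $3$ and makes the three faces of $\PP$ meeting $v$ mutually orthogonal. Here I would run the coning argument of Proposition~\ref{upper2finite}: let $S$ be the union of the triangular tetrahedron faces lying on $\partial\PP$ near $v$, and cone $S$ to the ideal endpoint $\hat v$ of the ray from $v$ through $v_0$. Because $v_0$ is now ideal this ray limits to $v_0$ itself, so $\hat v=v_0$ and the cone $\TT$ is exactly the union of the tetrahedra assigned to $v$. Reflecting $\TT$ in the three orthogonal faces at $v$ yields eight isometric copies that assemble into an ideal polyhedron $\QQ$ of cubical combinatorial type with finite center $v$; Lemma~\ref{cube} gives $\vol(\QQ)\leq 5V_3$, hence the contribution of $v$ is at most $\vol(\TT)=\vol(\QQ)/8\leq 5V_3/8$. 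Summing over all $N_F$ finite vertices gives the term $N_F\cdot 5V_3/8$, and adding the two sums yields the stated bound.

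To upgrade this to a strict inequality I would note that a vertex lying on a face containing $v_0$ contributes strictly less than the maximal $V_8/2$ or $5V_3/8$: the faces through $v_0$ are not subdivided, so such a vertex lies in strictly fewer tetrahedra than the generic count used above. As $v_0$ has degree $4$ there is always at least one such vertex, so at least one term in the sum is strict. The step I expect to be the main obstacle is precisely this finite-vertex estimate in the mixed setting: one must check that with $v_0$ ideal the coned region $\TT$ genuinely realizes an octant of an \emph{ideal} cube, so that Lemma~\ref{cube} applies, and that the orthogonality of the three faces at $v$ forces the eight reflected copies to fit together without overlap, exactly as in the compact case of Proposition~\ref{upper2finite}.
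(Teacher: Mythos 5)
Your proof is correct and follows essentially the same route as the paper's: choose an ideal $v_0$, apply the tetrahedral decomposition of Section~\ref{S:upper}, bound each remaining ideal vertex by $V_8/2$ via Proposition~\ref{upper2} and each finite vertex by $5V_3/8$ via Lemma~\ref{cube} as in Proposition~\ref{upper2finite}. Your additional observations --- that the cone apex $\hat v$ coincides with $v_0$ when $v_0$ is ideal, and that vertices on faces through $v_0$ lie in strictly fewer tetrahedra, which accounts for the strict inequality --- are correct refinements of details the paper leaves implicit.
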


\begin{proof}
Assign to one of the ideal vertices the role of $v_0$ in the decomposition
described at the beginning of this section.  Then each ideal vertex of $\PP$
which is not contained in a face of $\PP$ containing $v_0$ will be a vertex of
eight tetrahedra in the decomposition.  These tetrahedra contribute no more than $V_8/2$ to
the volume of $\PP$, by Proposition~\ref{upper2}.  Each finite vertex which is not contained in a face
containing $v_0$ is a vertex of six tetrahedra.  The volume contributed by
these is no more than $5V_3/8$ by Lemma~\ref{cube} and the proof of
Proposition~\ref{upper2finite}.  Putting this all
together completes the proof. 
\end{proof}


\section{Sequences of polyhedra realizing the upper bound estimates}
\label{S:seq}

In this section, it is proved that the upper bounds in
Theorems~\ref{theorempi2}, in \ref{pi2finite}, and
in \ref{theorempi3} are asymptotically sharp.  Results will first be established
about the convergence of sequences of circle patterns in the plane and about the
convergence of volumes of polyhedra which correspond to these circle patterns. 

Define a {\it disk pattern} to be a collection of closed round disks in the
plane such that no disk is the Hausdorff limit of a sequence of distinct disks
and so that the boundary
of any disk is not contained in the union of two other disks.  Define the angle between two disks to be the
angle between a clockwise tangent vector to the boundary of one disk at an
intersection point of their boundaries and a counterclockwise tangent vector to
the boundary of the other disk at the same point.  Suppose that $D$ is a disk
pattern such that for any two intersecting disks, the angle between them is in the
interval $[0,\pi/2]$.  Define $G(D)$ to be the graph with a vertex for each disk
and an edge between any two vertices whose corresponding disks have non-empty
interior intersection.  The graph $G(D)$ inherits an embedding in the plane from
the disk pattern.  Identify $G(D)$ with its embedding.  A face of $G(D)$ is a
component of the complement of $G(D)$.  Label
the edges of $G(D)$ with the angles between the intersecting disks.  The graph
$G(D)$ along with its edge labels will be referred to as the {\it labeled
$1$--skeleton for the disk pattern $D$}.  A disk pattern $D$ is said to be {\it
rigid} if $G(D)$ has only triangular and quadrilateral faces and each
quadrilateral face has the property that the four corresponding disks of the
disk pattern intersect in exactly one point.  See \cite{he} for more details on
disk patterns.

Consider the path metric on $G(D)$ obtained by giving each edge of $G(D)$ length
$1$.  Given a disk $d$ in a disk pattern $D$, the set of disks corresponding to
the ball of radius $n$ in $G(D)$ centered at the vertex corresponding to $d$ will
be referred to as {\it $n$ generations of the pattern about $d$}.  Given disk
patterns $D$ and $D'$ and disks $d\in D$ and $d' \in D'$, then $(D,d)$ and
$(D',d')$ {\it agree to generation $n$} if there is a label preserving graph
isomorphism between the balls of radius $n$ centered at the vertices
corresponding to $d$ and $d'$.  The following proposition is  a slight
generalization of the Hexagonal Packing Lemma in \cite{rodinsullivan}.

\begin{proposition}\label{hrs}
Let $c_{\infty}$ be a disk in an infinite rigid disk pattern
$D_{\infty}$.  For each positive integer $n$, let $D_n$ be a rigid finite disk
pattern containing a disk $c_n$ so that $(D_{\infty},c_{\infty})$ and
$(D_n,c_n)$ agree to generation $n$.  Then there exists a sequence $s_n$
decreasing to $0$ such that the ratios of the radii of any two disks adjacent to
$c_n$ differ from $1$ by less than $s_n$.  
\end{proposition}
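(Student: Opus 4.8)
The plan is to follow the contradiction-and-compactness scheme of Rodin and Sullivan, replacing the role of the regular hexagonal packing by the infinite rigid pattern $D_\infty$ and replacing their uniqueness theorem for the hexagonal packing by the rigidity theorem for infinite disk patterns (see \cite{he}). Concretely, for each $n$ set
$$ s_n = \sup \left\{ \left| \frac{r(d)}{r(d')} - 1 \right| \right\}, $$
where the supremum is taken over all rigid finite disk patterns $D$ containing a disk $c$ such that $(D_\infty, c_\infty)$ and $(D,c)$ agree to generation $n$, and over all pairs of disks $d,d'$ adjacent to $c$, with $r(\cdot)$ denoting Euclidean radius. The sequence $s_n$ is non-increasing, since agreement to generation $n+1$ implies agreement to generation $n$, so it suffices to prove $s_n \to 0$; the proposition then holds with this uniform $s_n$.

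First I would establish a \emph{ring lemma} supplying the compactness needed for the argument: in a rigid disk pattern all of whose intersection angles equal a fixed value in $(0,\pi/2]$, if a disk $c$ is surrounded by its closed chain of neighbors, then the ratio of the radius of any neighbor to $r(c)$ lies in a fixed interval $[\lambda, \Lambda]$ with $0 < \lambda \le \Lambda < \infty$ depending only on the (bounded) valence and the angle. The lower bound prevents a neighbor from collapsing relative to $c$, and applying the same estimate with the roles of $c$ and its neighbor reversed yields the upper bound. Iterating outward, once $c$ is normalized to be the unit disk centered at the origin the radii and centers of all disks within a fixed combinatorial distance $m$ of $c$ lie in a compact set.

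Now suppose for contradiction that $s_n \not\to 0$. Then there is $\varepsilon > 0$, a subsequence $n_k \to \infty$, rigid finite patterns $D_{n_k}$ with marked disks $c_{n_k}$ agreeing with $(D_\infty, c_\infty)$ to generation $n_k$, and for each $k$ a pair of disks adjacent to $c_{n_k}$ whose radius ratio deviates from $1$ by at least $\varepsilon$. Normalizing each $D_{n_k}$ by a Euclidean similarity so that $c_{n_k}$ is the unit disk centered at the origin, the ring lemma together with a diagonal argument over $m$ extracts a further subsequence along which, for every $m$, the disks within combinatorial distance $m$ of the center converge in both radius and center. Since $(D_{n_k}, c_{n_k})$ agrees with $(D_\infty, c_\infty)$ to generation $n_k \to \infty$, the limit $\widehat D$ is an infinite rigid disk pattern whose labeled $1$--skeleton is isomorphic to that of $D_\infty$, with marked disk $\widehat c$ equal to the unit disk. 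By the rigidity theorem for infinite disk patterns with bounded valence and angles in $[0,\pi/2]$ \cite{he}, $\widehat D$ coincides with $D_\infty$ up to a M\"obius transformation, and the normalization forces this map to be a Euclidean similarity; hence the radius ratios of disks adjacent to $\widehat c$ equal those of the disks adjacent to $c_\infty$ in $D_\infty$. As the infinite patterns arising here admit symmetries carrying $c_\infty$ to each of its neighbors, these neighbors are congruent and all such ratios equal $1$. The convergence then forces the corresponding ratios in $D_{n_k}$ to tend to $1$, contradicting the choice of pairs deviating by at least $\varepsilon$, and so $s_n \to 0$.

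I expect the main obstacle to be the compactness step: one must verify the ring lemma in the present generality (disks meeting at a fixed angle, with quadrilateral as well as triangular faces, the latter subject to the degeneracy condition built into the definition of rigidity) and check that the extracted limit is a genuine, nondegenerate infinite disk pattern, in particular that no disk shrinks to a point or blows up and that intersection angles and tangencies pass correctly to the limit, so that He's rigidity theorem genuinely applies. Identifying the normalizing M\"obius map with a similarity, and confirming that the neighbors of a disk in $D_\infty$ are equiradial for the patterns used in this paper, are the remaining points to pin down.
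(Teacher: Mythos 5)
Your proposal is correct and follows essentially the same route as the paper, which simply asserts that the Rodin--Sullivan Hexagonal Packing Lemma argument ``runs exactly the same'' once He's Lemma 7.1 replaces the Ring Lemma and He's Rigidity Theorem 1.1 replaces uniqueness of the hexagonal packing; you have fleshed out precisely that contradiction-and-compactness scheme, including the normalization, diagonal extraction, and the (correct) caveat that the conclusion as stated needs the neighbors of $c_{\infty}$ in $D_{\infty}$ to be equiradial, which holds for the patterns used in this paper.
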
       

\begin{proof} 
With lemma 7.1 from \cite{he} playing the role of the ring lemma
in \cite{rodinsullivan}, the proof runs exactly the same.  The length--area
lemma generalizes to this case with no change and any reference to the
uniqueness of the hexagonal packing in the plane should be replaced with
Rigidity Theorem 1.1 from \cite{he}.  
\end{proof}

A {\it simply connected disk
pattern} is a disk pattern so that the union of the disks is simply connected.
Disk patterns arising from finite volume hyperbolic polyhedra will all be simply
connected, so all disk patterns will be implicitly assumed to be simply
connected.  If for a simply connected disk pattern $D$, all labels on $G(D)$ are
in the interval $(0,\pi/2]$, Andreev's theorem implies that each face of $G(D)$
will be a triangle or quadrilateral.  An {\it ideal disk pattern}, $D$, is one
where the labels of $G(D)$ are in the interval $(0,\pi/2]$ and the labels around
each triangle or quadrilateral in $G(D)$ sum to $\pi$ or $2\pi$ respectively.
Ideal disk patterns correspond to ideal polyhedra.  A {non-ideal disk pattern},
$D$, is one where $G(D)$ has only triangular faces and the sum of the labels
around each
face is greater than $\pi$.  These disk patterns correspond to compact
polyhedra.  
 
Ideal disk patterns and their associated polyhedra will be dealt with first.
The analysis for non-ideal disk patterns is slightly different and will be
deferred until after the proofs of the remaining claims in
Theorems~\ref{theorempi2} and \ref{theorempi3}.  The upper half-space model for
$\HH^3$ will be used here.  For each disk $d$ in the
circle pattern, let $S(d)$ be the geodesic hyperbolic plane in $\HH^3$ bounded
by the boundary of $d$.  

Suppose $c$ is a disk in $D$ which intersects $l$ neighboring disks, $d_{1},
\dots d_{l}$.  In the case of an ideal disk pattern, the intersection of $S(c)$
with each of the $S(d_{i})$ is a hyperbolic geodesic.  These $l$ geodesics bound
an ideal polygon, $p(c) \subset \HH^3$. If necessary, choose coordinates so that
the point at infinity is not contained in $c$.  Cone $p(c)$
to the point at infinity and denote the ideal polyhedron thus obtained by
$C(p(c))$.  

\begin{lemma}\label{vc}
Suppose that $D_n$ and $D_{\infty}$ are simply connected, ideal, rigid, disk patterns
such that $(D_n,c_n)$ and $(D_{\infty},c_{\infty})$ satisfy
Proposition~\ref{hrs}.
Then
  $$\lim_{n\to \infty}
\vol(C(p(c_n)))=\vol(C(p(c_{\infty})))$$ 
where $C(p(c_{\infty}))$ is the cone on
the polygon determined by the disk $c_{\infty}$.  Moreover, there exists a bounded
sequence $0\leq \epsilon_n \leq K <\infty$ converging to zero such that  $|
\vol(C(p(c_n)))-\vol(C(p(c_{\infty}))) | \leq \epsilon_n$. 
\end{lemma}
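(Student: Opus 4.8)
The plan is to use the invariance of hyperbolic volume under the isometries of $\HH^3$, which in the upper half-space model include every Euclidean similarity of the bounding plane. Hence $\vol(C(p(c)))$ depends only on the mutual configuration of $c$ and the disks $d_1,\dots,d_l$ adjacent to it, taken up to similarity: the ideal polygon $p(c)$ is cut from the plane $S(c)$ by the geodesics $S(c)\cap S(d_i)$, whose endpoints lie on $\partial c$, and coning to infinity gives a polyhedron whose volume is a function of these ideal vertices alone. First I would normalize by a similarity so that for every $n$ the disk $c_n$ is the unit disk centered at the origin, with the point at infinity not lying in $c_n$, and do the same for $c_\infty$. It then suffices to prove two things: that after this normalization the configuration of disks adjacent to $c_n$ converges to the configuration of disks adjacent to $c_\infty$, and that the coned volume is a continuous function of this configuration.

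For the convergence of configurations, note that since $(D_n,c_n)$ and $(D_\infty,c_\infty)$ agree to generation $n$, for every $n\ge 1$ the combinatorics of the link of $c_n$---the cyclic order of its neighbors and which consecutive neighbors intersect---and all the angle labels coincide with those of $c_\infty$. With $c$ fixed as the unit disk, each neighbor $d_i$ meets $c$ at a prescribed angle, so the distance from the origin to the center of $d_i$ is determined by the radius of $d_i$; the angular positions of the centers are then forced by the requirement that consecutive neighbors meet at their prescribed labels. By the rigidity of the pattern (Rigidity Theorem~1.1 of \cite{he}) this configuration is uniquely determined and depends continuously on the radii. Proposition~\ref{hrs} supplies the radial information: the ratios of the radii of disks adjacent to $c_n$ differ from $1$ by at most $s_n\to 0$, so after normalization these radii become mutually equal in the limit, and the limiting configuration is the unique rigid equal-radius flower carrying the labels of $c_\infty$, namely the normalized configuration of $c_\infty$. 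By the continuous dependence just noted, the configuration around $c_n$ converges to that around $c_\infty$, and hence the ideal vertices of $p(c_n)$ converge to those of $p(c_\infty)$.

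It remains to observe that $\vol(C(p(c)))$ is continuous in these vertices. Triangulating $p(c)$ and coning each triangle to infinity expresses $C(p(c))$ as a union of ideal tetrahedra, and the volume of an ideal tetrahedron is a continuous function of its ideal vertices as long as it does not degenerate. Near the fixed configuration of $c_\infty$ the polygon $p(c)$ stays uniformly nondegenerate---its vertices remain distinct and bounded apart on $\partial c$---so no tetrahedron collapses and the volume varies continuously; this yields $\vol(C(p(c_n)))\to\vol(C(p(c_\infty)))$. For the quantitative refinement I would combine a local Lipschitz bound for the volume with Proposition~\ref{hrs}: the configuration distance is controlled by a constant multiple of $s_n$, so that $|\vol(C(p(c_n)))-\vol(C(p(c_\infty)))|\le\epsilon_n$ with $\epsilon_n\le C\,s_n$; since $s_n$ decreases to $0$, the sequence $\epsilon_n$ converges to $0$ and is bounded by $K=C\,s_1$.

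The step I expect to be the main obstacle is the middle one: upgrading the convergence of radii provided by Proposition~\ref{hrs} to convergence of the entire normalized flower. This is where rigidity must be invoked to recover the angular data from the radii, and where one must check that this recovery is continuous, for instance by the implicit function theorem at the nondegenerate configuration of $c_\infty$. A secondary point requiring care is the uniform nondegeneracy of $p(c_n)$, which guarantees that the coned volume is genuinely a continuous function of the configuration and that no pair of ideal vertices collides in the limit.
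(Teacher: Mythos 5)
Your argument is correct and follows essentially the same route as the paper: both reduce the lemma to (i) convergence of the local configuration of disks around $c_n$, supplied by Proposition~\ref{hrs} together with the fixed angle labels, and (ii) continuity of the coned volume as a function of that configuration. The paper compresses this by quoting the explicit formula $\vol(C(p(c_n)))=\sum_{i}\Lambda(\alpha_i^{n})$ from Thurston's notes and observing that each dihedral angle $\alpha_i^{n}$ is a continuous function of the intersection angle and the two radii, whereas you triangulate into ideal tetrahedra and argue continuity generically; the care you take in recovering the angular positions of the normalized flower from the radii is exactly the point the paper folds into that continuity claim.
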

\begin{proof} Suppose the dihedral angle between $S(c_n)$ and the vertical face which is
a cone on the intersection of $S(c_n)$ and $S(d_{i,n})$ is $\aa_{i}^{n}$ and
that the corresponding dihedral angles in $C(p(c_{\infty}))$ are
$\aa_i^{\infty}$.  Then by chapter 7 of \cite{thurstonnotes},
$$\vol(C(p(c_n)))=\sum_{i=1}^{l} \Lambda(\aa_i^{n}),$$ 
where $p(c_n)$ has degree $l$.  For each $i$, $\aa_i^{n}$
converges to $\aa_i ^{\infty}$ because $\aa _i^{n}$ is a continuous function of
the angle between $c_n$ and $d_{i,n}$ and the radii of the two disks, which
converge to the radii of the corresponding disks in the infinite packing by
Proposition~\ref{hrs}.  The function $\Lambda$ is continuous, so convergence of
the $\aa$'s implies the first statement of the lemma.  The second statement is a
consequence of the first and the fact that $\vol(C(p(c_n)))$ is finite for all
$n$ including $\infty.$  
\end{proof}

The remaining claims in Theorems \ref{theorempi2} and \ref{theorempi3} can now
be proved.  First the following proposition is proved:

\begin{proposition}\label{asy2}
There exists a sequence of ideal $\pi/2$--equiangular polyhedra $\PP_i$ with $N_i$ vertices such that
$$\lim_{i\to \infty} \frac {\vol(\PP_i)}{N_i} = \frac{V_8}{2}.$$
\end{proposition}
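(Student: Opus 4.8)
The plan is to exhibit an explicit infinite family of ideal $\pi/2$--equiangular polyhedra whose volume-to-vertex ratio tends to $V_8/2$, and to control the volumes using the convergence machinery developed in Lemma~\ref{vc} together with the disk-pattern framework of Proposition~\ref{hrs}. The natural candidate for the limiting object is the infinite rigid ideal disk pattern $D_{\infty}$ whose $1$--skeleton $G(D_{\infty})$ is the standard square grid tiling of the plane, with all edge labels equal to $\pi/2$. This pattern is ideal (the four labels around each quadrilateral face sum to $2\pi$) and rigid, and it is exactly the pattern whose disks correspond to the faces of the $\pi/2$--equiangular octahedral tilings; the associated local polyhedron $C(p(c_{\infty}))$ is an octant-type piece whose volume I expect to compute as $V_8/2$, matching the per-interior-vertex contribution already identified in Proposition~\ref{upper2}.

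First I would construct, for each $n$, a finite rigid ideal disk pattern $D_n$ that agrees with $(D_{\infty},c_{\infty})$ to generation $n$ about a central disk $c_n$; concretely, take larger and larger $\pi/2$--equiangular polyhedra $\PP_i$ whose $1$--skeletons contain an ever-growing square-grid region, for instance the polyhedra built by the octahedron-gluing construction from the proof of Theorem~\ref{pi2lower}, arranged in an expanding planar block so that a fixed disk sees $n$ generations of pure grid. By Andreev's theorem for $\pi/2$--equiangular polyhedra, each such combinatorial type is realizable, and its faces give a finite ideal disk pattern $D_i$. Applying Proposition~\ref{hrs} to $(D_{\infty},c_{\infty})$ and $(D_i,c_i)$ yields the ratio control on radii of adjacent disks, and then Lemma~\ref{vc} gives $\vol(C(p(c_i)))\to \vol(C(p(c_{\infty})))=V_8/2$, with an explicit error sequence $\epsilon_i\to 0$.

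The volume of $\PP_i$ is the sum over all its vertices of the volume contributed by the tetrahedra meeting that vertex, which in turn is organized by the cones $C(p(c))$ over the faces; equivalently, each interior vertex of the grid region contributes volume converging to $V_8/2$, while the boundary vertices (those within a bounded number of generations of the edge of the grid block) contribute a uniformly bounded amount by the bound $0\le \epsilon_i\le K<\infty$ in Lemma~\ref{vc}. The key estimate is therefore an isoperimetric counting argument: if the grid block has $N_i$ vertices, the number of boundary vertices grows like $\sqrt{N_i}$ while the number of genuinely interior vertices is $N_i - O(\sqrt{N_i})$. Writing $\vol(\PP_i)=\sum_{v} (\text{contribution at }v)$ and splitting into interior and boundary vertices gives
$$\left| \vol(\PP_i) - N_i\cdot \frac{V_8}{2}\right| \leq \sum_{v\text{ interior}} \epsilon_i(v) + O(\sqrt{N_i}),$$
so that dividing by $N_i$ and letting $i\to\infty$ forces $\vol(\PP_i)/N_i \to V_8/2$.

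The main obstacle will be bookkeeping the relationship between the face-based cones $C(p(c))$ of Lemma~\ref{vc} and the vertex-based volume contributions: the convergence lemma is phrased in terms of a disk (hence a face), whereas the target ratio $V_8/2$ is naturally a per-\emph{vertex} quantity. I would reconcile these by noting that each interior region of pure grid has equal numbers of interior faces and interior vertices up to boundary corrections, so the total volume can equally well be aggregated as a sum of $C(p(c_i))$ contributions over interior faces, each converging to $V_8/2$, with the count of such faces again equal to $N_i-O(\sqrt{N_i})$. The second genuine point requiring care is verifying that the boundary contributions are uniformly bounded and that their total is $O(\sqrt{N_i})$ rather than a constant times $N_i$; this rests on the uniform bound $K$ from Lemma~\ref{vc} applied to every disk, combined with the linear-in-perimeter count of boundary vertices, and is the step I would write out most carefully.
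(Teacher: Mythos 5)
Your overall strategy is exactly the one the paper uses: take the infinite rigid ideal disk pattern whose nerve is the square grid with all labels $\pi/2$, note that the cone over each face of the limiting pattern has volume $4\Lambda(\pi/4)=V_8/2$, approximate by finite patterns agreeing to many generations, and combine Proposition~\ref{hrs} and Lemma~\ref{vc} with an interior-versus-boundary count (including the reconciliation of the per-face cone volumes with the per-vertex count, which you correctly flag). The error analysis also needs one more remark than you give --- the per-face error $\epsilon_l$ depends on the generation depth $l$, so the interior sum is controlled by a Ces\`aro-type argument $\frac{1}{k}\sum_{l}\overline{\delta}_l\to 0$ rather than by a single uniform $\epsilon_i$ --- but that is the step you say you would write out carefully, and it goes through.

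The genuine gap is the construction of the sequence itself, which is the actual content of this existence statement. The family you point to --- the octahedron gluings from the proof of Theorem~\ref{pi2lower}, ``arranged in an expanding planar block'' --- is not shown to work and is doubtful as stated. Any union of $m$ regular ideal octahedra has volume exactly $mV_8$, so to achieve $\vol/N\to V_8/2$ you would need $N=2m+o(m)$, i.e.\ almost every ideal vertex shared by at least three octahedra; the chain gluings actually described in Theorem~\ref{pi2lower} give $N=3m+3$ and hence $\vol/N\to V_8/3$. Moreover, for Lemma~\ref{vc} to return the value $V_8/2$ you must verify that the resulting disk pattern agrees with the square-grid $D_\infty$ (all quadrilateral faces, grid combinatorics) on all but $O(\sqrt{N})$ faces, that the block closes up to a cell complex on $S^2$ satisfying Andreev's conditions (no prismatic $4$--circuits), and that the exceptional faces number only $O(\sqrt{N})$ --- none of which is established, and none of which is automatic for a block of glued octahedra, whose unglued faces remain triangles. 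The paper sidesteps all of this with a different, explicit family $\PP_{2k}$: a diagonal square grid of circumference and height comparable to $2k$, wrapped into a cylinder and capped by triangles and a $2k$--gon, for which the generation classes $F_{k-l}$ are counted exactly ($|F_{k-1}|=6k-1$, $|F_{k-l}|=8k$ for $2\le l\le k-1$, with $12k+3$ leftover faces contributing $O(k)$ volume). These polyhedra are not unions of regular ideal octahedra; some explicit construction at this level of detail is needed to complete your argument.
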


\begin{proof}
Let $D_{\infty}$ be the infinite disk pattern defined as 
$$D_{\infty}=\bigcup d_{(p,q)},$$
where the union ranges over all $(p,q)\in \ZZ^2$ such that both $p$ and $q$ are even or
both $p$ and $q$ are odd, and $d_{(p,q)}$ is the disk of radius $1$ centered at the
point $(p,q)$.  Consider the ideal hyperbolic polyhedron with infinitely many
vertices, $\PP_{\infty}$, corresponding to
$D_{\infty}$.  This polyhedron has all dihedral angles equal to $\pi/2$.  Applying
the decomposition into tetrahedra described in the proof of
Proposition~\ref{upper2}, it is seen that the sum of the volumes of the tetrahedra
meeting each vertex is exactly $\frac{V_8}{2}$. A 
sequence of polyhedra, $\PP_{2k},$ which have volume-to-vertex ratio converging
to that of
$\PP_{\infty}$ will be constructed.    

For each even natural number $2k$, $k\geq3$, consider the set of lines in the
plane $L_{2k}=\{(x,y) \in \RR^2 \mid \,  y=0,\,  y=2k \text{, or }y=\pm x + z,\,
z \in \ZZ\}$.  Now let $\PP_{2k}$ be the hyperbolic polyhedron with $1$--skeleton
given by 
$$\PP_{2k}^{(1)}=\{(x,y) \in L_{2k} \mid \, 0\leq y \leq
2k\}/\{(x,y)\sim(x+2k, y)\}$$ 
and all right angles.  See figure~\ref{figure:p6} for an illustration of
$\PP_6$.  The existence of such a
hyperbolic polyhedron is guaranteed by Andreev's theorem.  Equivalently, there
is a simply connected rigid disk pattern, $D_{2k}$, in the plane with each disk
corresponding to a face and right angles between disks which correspond to
intersecting faces. The vertices and faces of $\PP_{2k}$ will be referred to
in terms of the $(x,y)$ coordinates of the corresponding vertices and faces of
$L_{2k}$. 

The polyhedra $\PP_{2k}$ will prove the proposition.  The volume of
$\PP_{2k}$ is expressed as the sum of volumes of
cones on faces and Lemma~\ref{vc} is used to analyze the limiting volume-to-vertex 
ratio.  

Choose coordinates for the upper half-space model of $\HH^3$ so that the vertex
$(0,0)$ of $\PP_{2k}$ is located at infinity.  Then the volume of $\PP_{2k}$ may
be written
$$\vol(\PP_{2k})=\sum_{d} \vol(C(p(d))),$$ where the sum is taken over all faces
$d$ which do not meet the vertex $(0,0)$.  Using Lemma~\ref{vc}, the volume of
each $C(p(d))$ can be estimated in terms of the number of generations of disks
surrounding $d$ which agree with $D_{\infty}$. 

Fix a disk $d_{\infty} \in D_{\infty}.$  For each $m\in \ZZ$, $m\geq 0$, define $F_m$ to
be the set of disks $d\in D_{2k}$ for which $(D_{2k},d)$ and $(D_{\infty},
d_{\infty})$
agree to generation $m$, but do not agree to generation $m+1$.  The $2k$ faces of
$\PP_{2k}$ centered at the points $(i+1/2, k)$, $0\leq i \leq 2k-1$ as well as
the $4k$ faces which share an edge with them except for the face centered at
$(0,k-1/2)$ make up $F_{k-1}$.  Thus  
$$|F_{k-1}|=6k-1.$$
The set $F_{k-2}$ consists of the face centered at $(0,k-1/2)$ along with the
faces centered at the $8k-1$ points with coordinates $(i + 1/2 , k \pm 1)$ and
$(i, k \pm 3/2)$ for $0 \leq i \leq 2k-1$, excluding the face centered at
$(0,k-3/2)$.  In general, for $2 \leq l \leq k-1$, $F_{k-l}$ consists of the
face centered at $(0, k-(2(l-1)-1)/2)$ along with the faces centered at the
$8k-1$ points with coordinates $(i+1/2, k\pm (l-1))$ and $(i, k \pm (2l-1)/2)$
for $0\leq i \leq 2k-1$, excluding the face centered at $(0, k-(2l-1)/2)$.
Hence for $2\leq l\leq k-1,$ 
$$|F_{k-l}| =8k.$$   See
figure~\ref{figure:p6} for an example.  

\begin{figure}
\labellist
\hair 2pt
\pinlabel $(0,0)$ [tr] at 96 352
\pinlabel $1$ at 104 604
\pinlabel $1$ at 152 604
\pinlabel $1$ at 210 604
\pinlabel $1$ at 264 604
\pinlabel $1$ at 320 604
\pinlabel $1$ at 376 604
\pinlabel $2$ at 104 548
\pinlabel $2$ at 152 548
\pinlabel $2$ at 210 548
\pinlabel $2$ at 264 548
\pinlabel $2$ at 320 548
\pinlabel $2$ at 376 548
\pinlabel $1$ at 104 490
\pinlabel $2$ at 152 490
\pinlabel $2$ at 210 490
\pinlabel $2$ at 264 490
\pinlabel $2$ at 320 490
\pinlabel $2$ at 376 490
\pinlabel $1$ at 152 436
\pinlabel $1$ at 210 436
\pinlabel $1$ at 264 436
\pinlabel $1$ at 320 436
\pinlabel $1$ at 376 436
\pinlabel $1$ at 124 576
\pinlabel $1$ at 182 576
\pinlabel $1$ at 236 576
\pinlabel $1$ at 292 576
\pinlabel $1$ at 346 576
\pinlabel $1$ at 404 576
\pinlabel $2$ at 124 520
\pinlabel $2$ at 182 520
\pinlabel $2$ at 236 520
\pinlabel $2$ at 292 520
\pinlabel $2$ at 346 520
\pinlabel $2$ at 404 520
\pinlabel $1$ at 124 464
\pinlabel $1$ at 182 464
\pinlabel $1$ at 236 464
\pinlabel $1$ at 292 464
\pinlabel $1$ at 346 464
\pinlabel $1$ at 404 464
\endlabellist
\begin{center}
\scalebox{.50}{\includegraphics{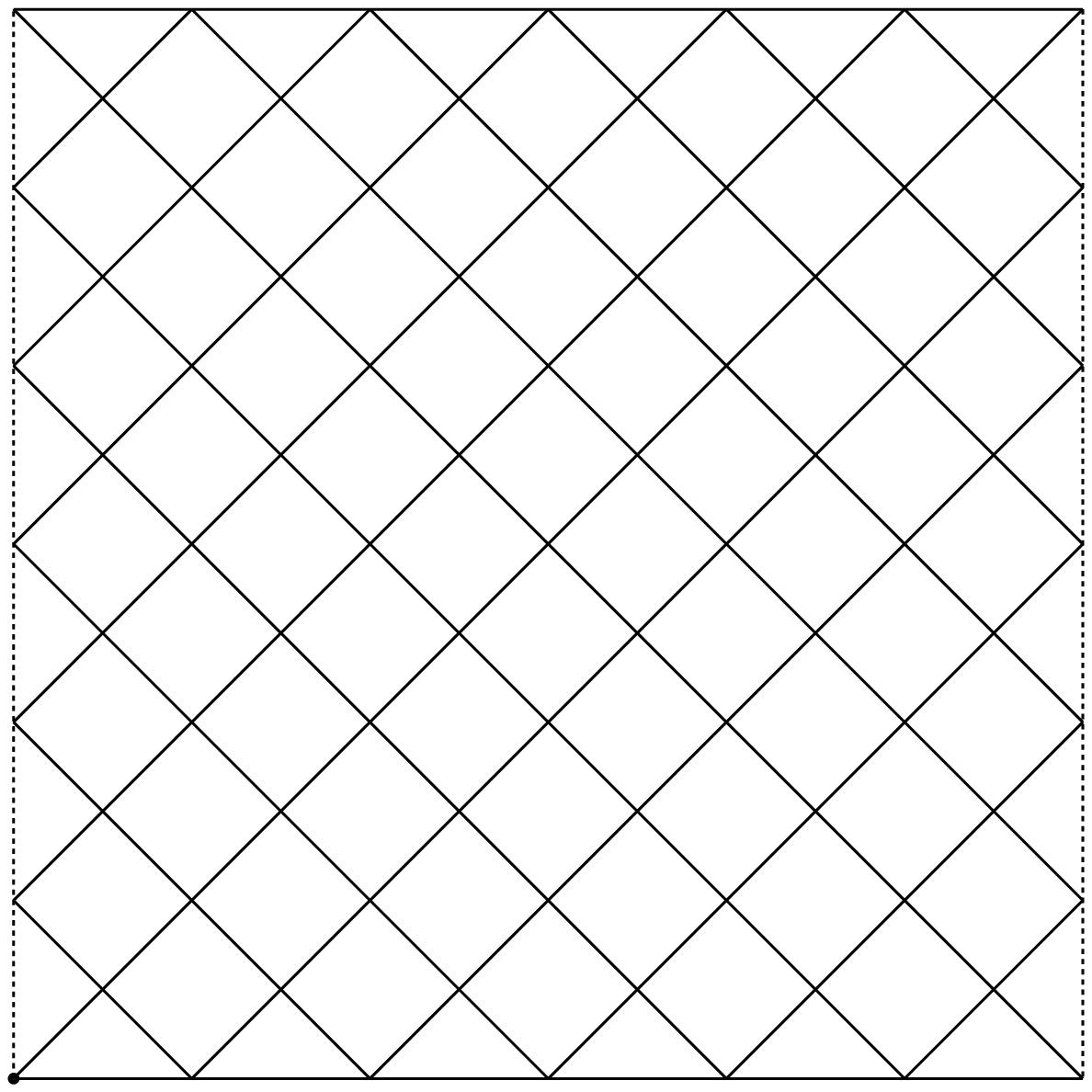}}
\end{center}
\caption{Identify the two vertical
sides to obtain $\PP_6$.  A face labeled by an integer $n$ has $n$ generations
of disks about it.  Unlabeled faces have $0$ generations about them.}
\label{figure:p6} \end{figure}

The polyhedron $\PP_{2k}$ has $8k^2+2k+2$ faces.  In the previous paragraph, it
was  
found that 
$$\left|\bigcup_{l=1}^{k-1} F_l\right|=8k^2-10k-1.$$
The remaining $12k+3$ faces consist of the following:  $4$ vertical faces which
do not contribute to the volume, one $2k$--gon, $4k-2$ triangular faces, and $8k$
rectangular faces in $F_0$.  The
maximum value of the Lobachevsky function, $\Lambda(\theta)$, is attained for
$\theta=\pi/6$ \cite{thurstonnotes}.  Hence, the formula for the volume of a
cone on an ideal polygon given in the proof of Lemma~\ref{vc} implies that the
volume of the cone on the $2k$--gon is less than or equal to $2k
\Lambda(\pi/6)$. Similarly, each of the remaining triangular and rectangular
faces have volume less than or equal to $4 \Lambda(\pi/6)$.  This implies that
the leftover faces have cone volume summing to a value $L\leq 14k\Lambda(\pi/6)$.  

The volume of the cone to infinity of any face
in $D_{\infty}$ is $4 \Lambda(\pi/4)=V_8 /2$.  By Lemma~\ref{vc}, there exists a
real-valued, positive function, $\delta_m$, on $F_m$ such that for each face $f \in F_m$,
$0 \leq \delta_m(f) \leq \epsilon_m$ and $\vol(C(f))=V_8/2 \pm \delta_m(f)$.
Therefore, the volume of $\PP_{2k}$ can written as:

$$\vol(\PP_{2k})=\sum_{l=1}^{k-1} \sum_{f\in F_{l}} \left(\frac{V_8}{2}
\pm \delta_{l}(f)\right)+L. $$
Using the analysis of the $F_m$ from
above, expand the sums and collect terms to get

$$\vol(\PP_{2k})= (8k^2-10k-1)\frac{V_8}{2} + \sum_{l=1}^{k-1} \sum_{f\in
F_{l}}(\pm \delta_{l} (f)) +L .$$

The polyhedron $\PP_{2k}$ has $N_{2k}=8k^2+2k$ vertices.  Therefore 
$$\lim_{k\to\infty} \frac{8k^2-10k-1}{N_{2k}}\frac{V_8}{2}=\frac{V_8}{2}.$$

It remains to show that the ratio of the last two summands to the number of
vertices converges to zero.
Set $\overline{\delta}_{l}=\max_{f\in F_l} \delta_l(f)$.  Then 
$$\lim_{k \to \infty}\frac{\left|\sum_{l=1}^{k-1} \sum_{f\in F_l} (\pm
\delta_l(f)) \right|}{N_{2k}}\leq
\lim_{k \to \infty} \left( \frac{(6k-1)\overline{\delta}_{k-1}}{N_{2k}}+
 \frac{8k\sum_{l=1}^{k-2} \overline{\delta_l}}{N_{2k}}\right)=0$$
because $\overline{\delta}_l \to 0$ as $l \to \infty$. 
Also since $L<14k \Lambda(\pi/6)$, 
$$\lim_{k\to\infty}\frac{L}{N_{2k}}=0.$$  

Therefore, 
$$\lim_{k \to \infty} \frac{\vol(\PP_{2k})}{N_{2k}}= \frac{V_8}{2}.$$
\end{proof}

The argument to prove Proposition~\ref{asy2} easily adapts to prove the
following:

\begin{proposition}\label{asy3}
There exists a sequence of ideal $\pi/3$--equiangular polyhedra $\PP_i$ with $N_i$ vertices such that
$$\lim_{i\to \infty} \frac {\vol(\PP_i)}{N_i} = \frac{3 V_3}{2}.$$
\end{proposition}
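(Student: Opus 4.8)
The plan is to mimic the construction and analysis carried out in the proof of Proposition~\ref{asy2}, replacing the octahedral building blocks with tetrahedral ones appropriate to the $\pi/3$ case. First I would exhibit an infinite ideal $\pi/3$--equiangular polyhedron $\PP_{\infty}$ whose $1$--skeleton is a trivalent tiling of the plane (so that every vertex has degree $3$, as required by Andreev's theorem for $\pi/3$--equiangular polyhedra), together with its associated infinite rigid disk pattern $D_{\infty}$ in which the labels are all $\pi/3$ and the faces are quadrilaterals with labels summing to $2\pi$. Applying the tetrahedral decomposition from Section~\ref{S:upper} (the one used in Proposition~\ref{upper3}) to $\PP_{\infty}$, one checks that the volume contributed by the six tetrahedra meeting each vertex is exactly the maximal value $3V_3/2$ guaranteed by Lemma~\ref{maxlem}, which sets the target ratio.

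Next I would build a sequence of finite polyhedra $\PP_i$ (indexed by a growing size parameter, say a strip of height $2k$ with the two vertical sides identified, exactly as in the $\PP_{2k}$ construction) whose disk patterns $D_i$ agree with $D_{\infty}$ to more and more generations as $i\to\infty$. Here I would invoke Proposition~\ref{hrs} to control the radii of disks deep inside the pattern and Lemma~\ref{vc} to conclude that the cone volume $\vol(C(p(d)))$ over each such interior face converges to the limiting value $3V_3/2$ (the cone on the corresponding ideal quadrilateral in $D_{\infty}$), with an explicit error $\delta_m(f)\le\epsilon_m\to 0$ depending only on how many generations $f$ agrees with $D_{\infty}$.

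The heart of the argument, as before, is the bookkeeping. After choosing coordinates so that one vertex sits at infinity, I would write $\vol(\PP_i)=\sum_d \vol(C(p(d)))$ over faces $d$ not meeting that vertex, partition the faces into the generation sets $F_m$, and count $|F_m|$ in terms of $k$. The count should show that the number of faces agreeing with $D_{\infty}$ to high generation grows like the total number of faces (both quadratic in $k$), while the ``boundary'' faces near the strip's top, bottom, and identified sides form a set whose cardinality is only linear in $k$ and whose total cone volume $L$ is therefore $O(k)$. Comparing with the vertex count $N_i$, which is quadratic in $k$, the dominant term contributes $3V_3/2$ per vertex in the limit, while both the accumulated error $\sum_{m}\sum_{f\in F_m}\delta_m(f)$ and the leftover term $L$ are negligible relative to $N_i$.

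The main obstacle I anticipate is producing a concrete, explicitly describable trivalent planar tiling that is genuinely $\pi/3$--equiangular and realizable (i.e.\ whose dual has no prismatic $3$--circuits, by Theorem~\ref{and3}), and then carrying out the generation count $|F_m|$ for this tiling with enough precision to separate the quadratic ``bulk'' contribution from the linear ``boundary'' contribution. Once the tiling and its finite truncations $\PP_i$ are pinned down, the convergence machinery (Propositions~\ref{hrs} and Lemma~\ref{vc}) and the limit computation go through verbatim as in Proposition~\ref{asy2}, so the combinatorial geometry of the tiling is where the real work lies.
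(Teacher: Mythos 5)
Your overall strategy is exactly the paper's: build finite truncations of an infinite trivalent pattern, use Proposition~\ref{hrs} and Lemma~\ref{vc} to control cone volumes over faces deep in the pattern, and show the boundary contribution is $O(k)$ against a quadratic vertex count. The paper resolves the ``main obstacle'' you flag by taking the regular hexagonal tiling of the plane, restricted to a strip of height $2k\sqrt{3}$ with the two sides identified; this gives a trivalent $1$--skeleton whose dual (the triangular lattice) has no prismatic $3$--circuits, so Theorem~\ref{and3} applies. You should note, though, that your description of the limiting disk pattern is wrong: for a trivalent ideal $\pi/3$--equiangular polyhedron the faces of $G(D_{\infty})$ are \emph{triangles} with labels summing to $3\cdot\pi/3=\pi$ (this is the ideal disk pattern condition), not quadrilaterals summing to $2\pi$ --- that is the $\pi/2$ case with $4$--valent ideal vertices.

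More seriously, your bookkeeping contains a factor-of-two slip that would derail the final computation if carried out literally. The cone volume over an interior face converges to $6\Lambda(\pi/6)=3V_3$, the volume of the cone on an ideal \emph{hexagon} (the faces of the hexagonal tiling), not to $3V_3/2$. The value $3V_3/2$ is the per-\emph{vertex} contribution: since the $1$--skeleton is trivalent with hexagonal faces, the number of faces is asymptotically half the number of vertices (the paper's $\QQ_{2k}$ has $4k^2+k+2$ faces and $N_{2k}=8k^2+2k$ vertices), so the limit is $(N/2)\cdot 3V_3/N=3V_3/2$. With your stated per-face value of $3V_3/2$ and the correct face count you would obtain $3V_3/4$. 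The fix is only arithmetic --- everything else in your plan matches the paper --- but you must keep the per-face cone volume ($3V_3$) and the face-to-vertex ratio ($1/2$) straight when assembling the limit.
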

\begin{proof}
Consider the regular hexagon $H$ in the plane formed by the vertices $(0,0)$,
$(1,0)$, $(3/2, \sqrt{3}/2)$, $(1,\sqrt{3})$, $(0,\sqrt{3})$, and
$(-1/2,\sqrt{3}/2)$.  Let $G$ be the lattice of translations generated by
$\{(x,y)\mapsto (x+3,y) \}$ and $\{(x,y) \mapsto (x+3/2,y+ \frac{\sqrt{3}}{2})$.
Now define $T$ to be the orbit of the hexagon $H$ under the action of $G$ on the
plane.  This orbit is a tiling of the plane by regular hexagons.  As in the
previous construction, define 
$$L_{2k}=\{(x,y)\in \RR^2 \mid \, y=0, \,
y=2k\sqrt{3} \text{, or } (x,y) \text{ lies on a vertex or edge of }T\}.$$
 Let
$\QQ_{2k}$ be the hyperbolic polyhedron with $1$--skeleton
$$\QQ_{2k}^{(1)}=\{(x,y)\in L_{2k} \mid \, 0\leq y \leq 2k\sqrt{3}\}/\{(x,y)
\sim (x+3k,y)\}$$
 and all dihedral angles $\pi/3$. This is a polyhedron with
$4k^2+k+2$ faces and $N_{2k}= 8k^2 +2k$ vertices. Let $D_{2k}$ be the associated
simply connected rigid disk pattern and $D_{\infty}$ to be the infinite circle
pattern with $G(D_{\infty})$ equal to a tiling of the plane by equilateral
triangles with each edge labeled $\pi/3$.

\begin{figure} 
\labellist
\hair 2pt
\pinlabel $(0,0)$ [t] at 40 410
\pinlabel $(1,0)$ [t] at 80 410
\pinlabel $1$ at 60 713
\pinlabel $1$ at 176 713
\pinlabel $1$ at 292 713
\pinlabel $2$ at 60 646
\pinlabel $2$ at 176 646
\pinlabel $2$ at 292 646
\pinlabel $1$ at 60 580
\pinlabel $2$ at 176 580
\pinlabel $2$ at 292 580
\pinlabel $1$ at 176 514
\pinlabel $1$ at 292 514
\pinlabel $1$ at 118 680
\pinlabel $1$ at 234 680
\pinlabel $1$ at 350 680
\pinlabel $2$ at 118 612
\pinlabel $2$ at 234 612
\pinlabel $2$ at 350 612
\pinlabel $1$ at 118 546
\pinlabel $1$ at 234 546
\pinlabel $1$ at 350 546
\endlabellist
\begin{center}
\scalebox{.50}{\includegraphics{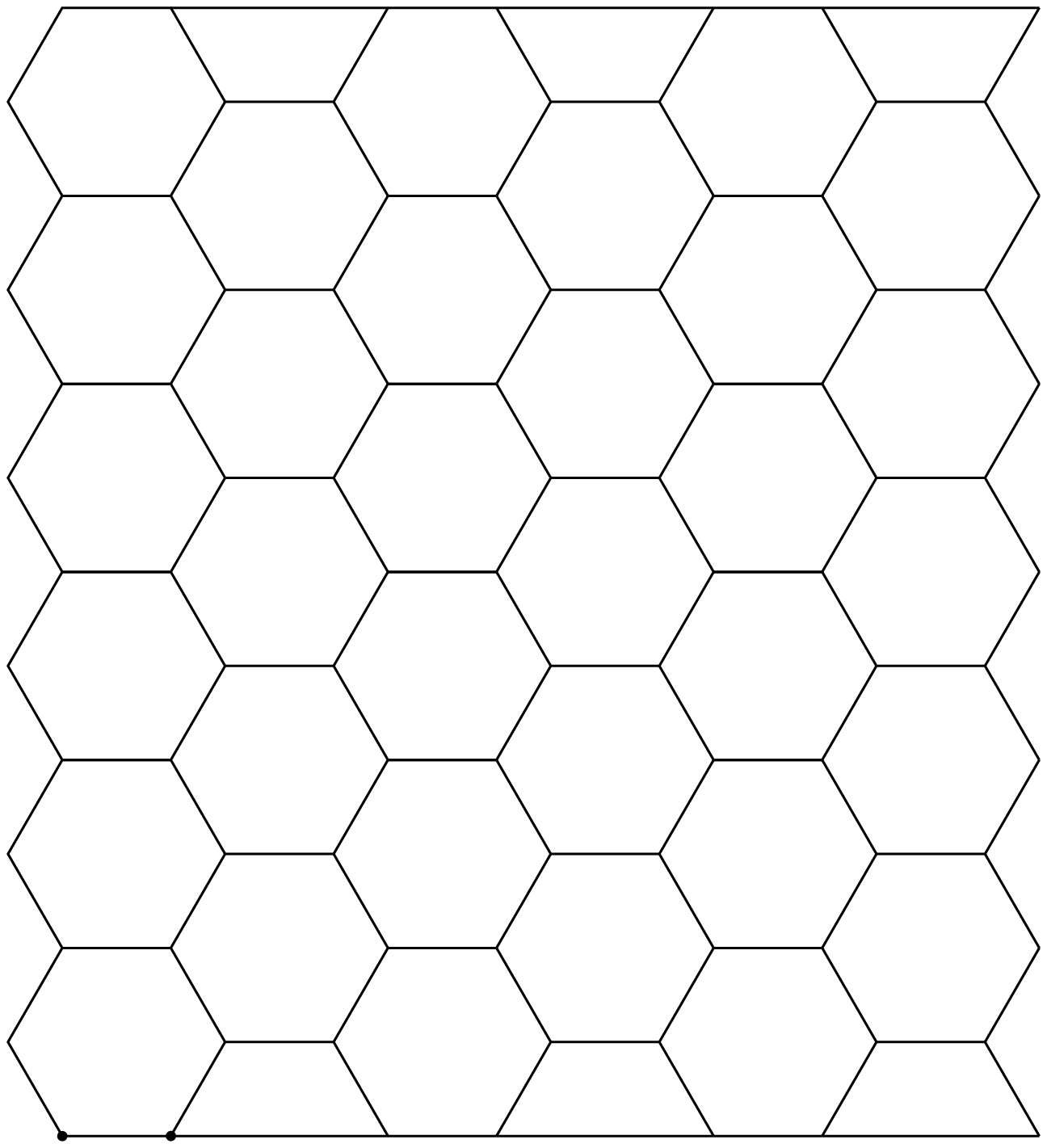}} 
\end{center}
\caption{Identify the left and right sides to get $\QQ_6$.  The labeling is as in
the previous figure.} 
\label{figure:hex} 
\end{figure}

As in the proof of Proposition~\ref{asy2}, choose coordinates so that the vertex at $(0,0)$ is at infinity.
Recall that for a fixed choice of $d_{\infty} \in D_{\infty}$, $F_m$ is defined to be the set of disks $d \in D_{2k}$ for which
$(D_{2k},d)$ and $(D_{\infty},d_{\infty})$ agree to generation $m$, but do not
agree to generation $m+1$.   The set $F_{k-1}$ consists of $3k-1$ faces, while the remaining $F_{k-l}$
for $2 \leq l \leq k-1$ consist of $4k$ faces.  Again, the faces not contained
in $F_{k-l}$ for $1 \leq l \leq k-1$ have cone volume summing to a value
$L$ bounded
above by a constant multiple of $k$ where the bound is independent of $k$.  The
volume of the cone to infinity of any face in the regular hexagonal circle
pattern is $6 \Lambda(\pi/6)=3V_3$.  There exists a function, $\delta_{l}$,
with the same properties as above.  The volume of $\QQ_{2k}$ is
$$\vol(\QQ_{2k})=
\sum_{l=1}^{k-1} \sum_{f\in F_{l}} \left(3V_3 \pm
\delta_{l}(f)\right)+ L.$$  The argument finishes exactly as the all
right-angled case to give  $$\lim_{k \to \infty}
\frac{\vol(\QQ_{2k})}{N_{2k}}=\frac{3V_3}{2}.$$
\end{proof}

To finish the proof of Theorem~\ref{pi2finite}, a minor modification to
Lemma~\ref{vc} is made.
 
Recall that for a disk pattern $D$, $G(D)$ is the graph with a vertex for each
disk and an edge connecting two vertices which have corresponding disks with
non-empty interior intersection.  Suppose that $D$ is a non-ideal disk pattern
and that $c$ is a disk which intersects $l$ neighboring disks, $d_{1}, \dots
d_{l}.$  The intersection of $S(c)$ with each of the $S(d_{i})$ is a finite
length geodesic segment.  The union of the $l$ geodesic segments along with the
disk bounded by them in $S(c)$ is a polygon $p(c)$.  Let $x_0\in \HH^3$ be a
point which is not contained in $S(c)$.  Denote by $C(p(c),x_0)$ the
cone of $p(c)$ to the point $x_0$.  The cone to the point at infinity in the
upper half-space model of $\HH^3$ will be denoted by $C(p(c),\{\infty\})$.

Let $c$ be a disk in a simply connected, non-ideal, finite disk pattern, $D$,
with associated polyhedron $\PP$. Suppose $c_{max}$ realizes the quantity
$$\max_{c'} d_{G(D)} ( c,c').$$
Define a \textit{cut point for $c$} to be any vertex of $p(c_{max})$.

\begin{lemma}\label{vcc}
Suppose that $D_n$ and $D_{\infty}$ are simply connected, non-ideal, rigid, disk
patterns, such that $(D_n,c_n)$ and $(D_{\infty},c_{\infty})$ satisfy
Proposition~\ref{hrs} and $D_{\infty}$ fills the entire plane.  Suppose also that $x_n$ is a cut point for $c_n$ in
$D_n$. 
Then
  $$\lim_{n\to \infty}
\vol(C(p(c_n),x_n))=\vol(C(p(c_{\infty}),\{\infty\})).$$ 
Moreover, there exists a
bounded sequence $0\leq \epsilon_n \leq K <\infty$ converging to zero such that
$| \vol(C(p(c_n),x_n))-\vol(C(p(c_{\infty}),\{\infty\})) | \leq \epsilon_n$ 
\end{lemma}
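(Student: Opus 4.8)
The plan is to follow the proof of Lemma~\ref{vc}, with the cone to infinity replaced by the cone to the finite cut point $x_n$, and to control the discrepancy coming from the apex no longer sitting at $\{\infty\}$. First I would choose coordinates, for each $n$ and for $D_\infty$, so that $S(c_n)$ and $S(c_\infty)$ are the unit hemisphere centered at the origin, using the residual freedom in $\Isom(S(c_n))$ to position the polygon $p(c_n)$ so that it converges, as a positioned polygon on the unit hemisphere, to $p(c_\infty)$. That such a normalization exists and yields convergence is precisely the content of Proposition~\ref{hrs}: since $(D_n,c_n)$ and $(D_\infty,c_\infty)$ agree to generation $n$ and the ratios of radii of disks adjacent to $c_n$ tend to $1$, the neighboring disks of $c_n$ converge in size and position to those of $c_\infty$, so the geodesic segments that they cut on $S(c_n)$ converge to those cut on $S(c_\infty)$.

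The essential step, and the one I expect to be the main obstacle, is to show that in this normalization the apex $x_n$ converges to the single ideal point $\{\infty\}$ used in the comparison. This cannot be avoided: the volume of the cone of a fixed compact polygon to an ideal point genuinely depends on which ideal point is chosen --- a short tilt computation shows that $\vol(C(p(c_\infty),\xi))$ is not invariant under the isometries of $S(c_\infty)$ that move $\{\infty\}$ --- so knowing merely that $x_n$ escapes to the ideal boundary is not enough. To pin down the direction I would use the hypothesis that $D_\infty$ fills the plane together with the definition of the cut point. Because $D_\infty$ is a fixed plane-filling pattern and $D_n$ agrees with it to generation $n$, rigidity and Proposition~\ref{hrs} keep the radii of the disks of $D_n$ uniformly comparable to $1$ out to roughly $n$ generations, so their union covers a Euclidean region about the origin whose diameter tends to infinity. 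The cut point $x_n$ is a vertex of $p(c_{max})$, where $c_{max}$ is at graph distance at least $n$ from $c_n$; hence $x_n$ lies near the boundary of this growing region, at Euclidean distance tending to infinity while sitting at bounded height on a hemisphere of radius comparable to $1$. Therefore $x_n \to \{\infty\}$ in $\overline{\HH^3}$, matching the cone direction of the comparison object.

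With both the polygon and the apex under control, I would conclude exactly as in Lemma~\ref{vc}. Triangulating $p(c_n)$ and coning each triangle to $x_n$ decomposes $C(p(c_n),x_n)$ into finitely many tetrahedra, and the hyperbolic volume of each is a continuous function of its vertices (equivalently, of its dihedral angles), remaining finite and continuous in the limit where one vertex becomes ideal. Since $p(c_n)\to p(c_\infty)$ and $x_n\to\{\infty\}$, the vertices of every tetrahedron converge to the vertices of the corresponding tetrahedron in the decomposition of $C(p(c_\infty),\{\infty\})$, so summing gives $\vol(C(p(c_n),x_n))\to\vol(C(p(c_\infty),\{\infty\}))$. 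Finally, the bounded sequence $\epsilon_n\to 0$ in the second assertion follows, just as in Lemma~\ref{vc}, from this convergence together with the fact that each cone volume is finite and uniformly bounded because the polygons $p(c_n)$ are uniformly bounded.
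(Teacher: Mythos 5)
Your proposal is correct and its skeleton matches the paper's: normalize, show the cut point $x_n$ tends to the ideal point $\{\infty\}$, decompose the cone into tetrahedra, and pass to the limit term by term. The one genuine divergence is in how the termwise convergence is established. The paper decomposes $C(p(c_n),x_n)$ into $2a_n$ compact orthoschemes (via the projection construction from Section~\ref{S:upper}), writes down Lobachevsky's closed-form volume formula for a compact orthoscheme $T(\alpha_n,\beta_n,\gamma_n)$ in terms of the auxiliary angle $\delta_n=\arctan\bigl(\sqrt{-\Delta_n}/(\cos\alpha_n\cos\gamma_n)\bigr)$, and checks explicitly that $\Delta_n\to-\sin^2\alpha_n\cos^2\gamma_n$, hence $\delta_n\to\alpha_n$, so the compact orthoscheme volumes converge to the one-ideal-vertex orthoscheme volumes. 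You instead cone a triangulation of $p(c_n)$ to $x_n$ and invoke the continuity of the volume of a hyperbolic tetrahedron as a function of its vertices in $\overline{\HH}^3$, including at ideal limits. That continuity is a standard fact, so your route is legitimate and softer; the paper's computation is more self-contained and makes the error term $\epsilon_n$ concretely traceable to the angle data, which is what gets summed in Proposition~\ref{asycom2}. A point in your favor: you justify more carefully than the paper does why $x_n$ converges to the \emph{specific} ideal point $\{\infty\}$ rather than merely leaving every compact set --- and you are right that this matters, since the volume of the cone on a fixed polygon genuinely depends on the choice of ideal apex. Just be aware that your claim that the radii stay comparable to $1$ out to roughly $n$ generations needs Proposition~\ref{hrs} (or the ring-lemma analogue from \cite{he}) applied at the interior disks, not only at $c_n$; as you use it only to force $|x_n|\to\infty$, this is harmless.
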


\begin{proof}
Note that for any choice of points $y_n \in S(c_n)$, $d(x_n , y_n) \to
\infty$ as $n \to \infty$. Also, since $D_{\infty}$ fills the entire plane, the
distance measured in $G(D_n)$ from $c_n$ to a disk $c'_n$ such that $S(c'_n)$
contains $x_n$ goes to infinity.  Hence as $n$ goes to infinity, $x_n$
approaches the point at infinity, so the compact cone $C(p(c_n), x_n)$
approaches the infinite cone $C(p(c_{\infty}), \{\infty\})$.  Therefore it
suffices to show that the volume of the compact cones approaches that of the
infinite cone.

A \textit{$3$--dimensional hyperbolic orthoscheme} is a hyperbolic tetrahedron
with a sequence of three edges $v_0v_1$, $v_1v_2$, and $v_2v_3$ such that
$v_0v_1 \bot v_1v_2 \bot v_2v_3$.  See figure~\ref{figure:comortho}.
Suppose that the degree of $p(c_n)$ is $a_n$.  The cone, $C(p(c_n),x_n)$, can be
decomposed into $2a_n$ orthoschemes by the procedure described at the beginning
of section~\ref{S:upper}.  
\begin{figure} 
\labellist
\small\hair 2pt
\pinlabel $\alpha_n$ [bl] at 118 137
\pinlabel $\beta_n$ [br] at 41 136
\pinlabel $\gamma_n$ [tr] at 39 48
\pinlabel $v_0$ [t] at 90 0
\pinlabel $v_1$ [r] at 0 92
\pinlabel $v_2$ [b] at 90 181
\pinlabel $v_3$ [l] at 143 92
\endlabellist
\begin{center}
\scalebox{.80}{\includegraphics{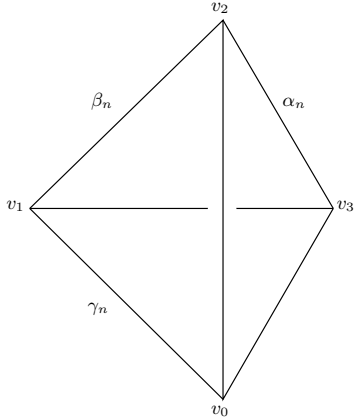}} 
\end{center}
\caption{A compact orthoscheme, $T(\alpha_n, \beta_n, \gamma_n)$.  The unlabeled edges have dihedral angle $\pi/2$} 
\label{figure:comortho} 
\end{figure}

The volume of one of the compact orthoschemes, $T(\alpha_n,
\beta_n, \gamma_n)$ determined by
angles $\alpha_n,$ $\beta_n$, and $\gamma_n,$ as shown in 
figure~\ref{figure:comortho},
is given by
\begin{align*}
\vol(T(\alpha_n, \beta_n, \gamma_n) = \frac{1}{4} &\bigg(
\Lambda(\alpha_n  +\delta_n) -
\Lambda(\alpha_n-\delta_n)+\Lambda(\gamma_n+\delta_n)  
- \Lambda(\gamma_n-\delta_n)  \\
&-  \Lambda \left(\frac{\pi}{2} -\beta_n + \delta_n
\right) 
+\Lambda \left(\frac{\pi}{2} -\beta_n -
\delta_n \right) + 2 \Lambda \left(\frac{\pi}{2}-\delta_n \right) \bigg) ,  
\end{align*}
where
$$0 \leq \delta_n = \arctan{\frac{\sqrt{-\Delta_n}}{\cos{\alpha_n} \cos{\gamma_n}}} <
\frac{\pi}{2},$$
and 
$$\Delta_n= \sin^2{\alpha_n} \sin^2{\gamma_n} - \cos^2{\beta_n}.$$ 
This is due to Lobachevsky.  See, for example, page 125 of \cite{geomii}.  

Similarly, the cone $C(p(c_{\infty}), \{\infty\})$ can be decomposed into
orthoschemes of the form $T(\alpha_{\infty},
\pi/2-\alpha_{\infty}, \gamma_{\infty})$ with one ideal vertex .  The volume of this orthoscheme
is given by
$$\vol(T(\alpha_{\infty}, \pi/2 - \alpha_{\infty}, \gamma_{\infty}) = \frac{1}{4}
\left( \Lambda(\alpha_{\infty} +
\gamma_{\infty}) + \Lambda( \alpha_{\infty} - \gamma_{\infty}) + 2 \Lambda( \pi/2
- \alpha_{\infty}) \right).$$  

As $n \to \infty$, $\Delta_n \to -\sin^2{\alpha_n} \cos^2{\gamma_n}$, so
$\delta_n \to \alpha_n$.  Therefore the sequence of volumes of the compact
orthoschemes converges to that of the orthoscheme with one ideal vertex.
Summing over all orthoschemes in the decomposition proves the lemma.
\end{proof}

The next proposition will complete the proof of Theorem~\ref{pi2finite}.
 \begin{proposition}\label{asycom2}
There exists a sequence of compact $\pi/2$--equiangular polyhedra $\PP_i$ with $N_i$ vertices such that
$$\lim_{i\to \infty} \frac {\vol(\PP_i)}{N_i} = \frac{5 V_3}{8}.$$
\end{proposition}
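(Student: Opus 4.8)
The plan is to follow the proofs of Propositions~\ref{asy2} and \ref{asy3} essentially verbatim, substituting the compact cone estimate of Lemma~\ref{vcc} for the ideal one of Lemma~\ref{vc}. First I would fix the model infinite pattern. Let $D_{\infty}$ be the non-ideal disk pattern whose disks are the Euclidean disks of radius $1$ centered at the points of the triangular lattice of nearest-neighbour spacing $\sqrt{2}$, so that each disk meets exactly its six nearest neighbours and is disjoint from every other disk. Two circles of radius $1$ are orthogonal precisely when their centres are at distance $\sqrt{2}$, and a regular hexagon has circumradius equal to its side length, so the six neighbours of each disk also meet one another orthogonally. Hence $G(D_{\infty})$ is the tiling of the plane by equilateral triangles with every edge labelled $\pi/2$; each triangular face has label sum $3\pi/2>\pi$, so $D_{\infty}$ is a rigid non-ideal disk pattern that fills the plane, and it corresponds to the infinite compact $\pi/2$--equiangular polyhedron all of whose faces are hexagons. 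That hexagons are forced is consistent with Euler's formula: a trivalent tiling of the plane by $d$--gons satisfies $d=6$.

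Next I would build the finite approximants. Exactly as in Proposition~\ref{asy2}, I would cut a large region out of $G(D_{\infty})$ and cap it off along its boundary into the $1$--skeleton of a compact polyhedron $\PP_k$ having a bounded-per-layer number of non-hexagonal boundary faces, invoking Andreev's theorem for $\pi/2$--equiangular polyhedra to realize it as a compact right-angled hyperbolic polyhedron with rigid disk pattern $D_k$. By construction $(D_k,d)$ agrees with $(D_{\infty},d_{\infty})$ to arbitrarily many generations for disks $d$ deep in the interior, so I would partition the faces into generation sets $F_m$ as before, checking that all but $O(k)$ faces lie in some $F_m$ with $m\to\infty$ as $k\to\infty$, and that the remaining $O(k)$ boundary faces have uniformly bounded cone volume summing to $L=O(k)$.

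I would then choose a vertex $v_0$ on the boundary of the region and use the tetrahedral decomposition of Section~\ref{S:upper} to write $\vol(\PP_k)$ as the sum over faces $c$ not meeting $v_0$ of $\vol(C(p(c),v_0))$. For a central face the distance from $v_0$ to $c$ tends to infinity, which is the only property of the cut point used in the proof of Lemma~\ref{vcc}; the lemma then gives $\vol(C(p(c),v_0))=V_{\infty}\pm\delta_m(c)$ with $\delta_m\to 0$, where $V_{\infty}$ is the volume of the cone to infinity over a regular hexagonal face of $D_{\infty}$. Evaluating the orthoscheme formula of Lemma~\ref{vcc} at the angles of the regular hexagonal pattern gives $V_{\infty}=5V_3/4$; this is consistent with the per-vertex value of Proposition~\ref{upper2finite}, since in the limit each trivalent vertex contributes an octant of the regular ideal cube (Lemma~\ref{cube}) of volume $5V_3/8$, and there are asymptotically half as many hexagonal faces as vertices. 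Summing, $\vol(\PP_k)=\left|\bigcup_m F_m\right|\cdot V_{\infty}+\sum_m\sum_{c\in F_m}(\pm\delta_m(c))+L$.

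Finally I would take the limit. Since every vertex of $\PP_k$ is trivalent, $3N_k=2E_k$, and since all but $O(k)$ faces are hexagons, the number of faces is $N_k/2+O(k)$, while $N_k=O(k^2)$. Dividing by $N_k$ and arguing exactly as in Proposition~\ref{asy2} with $\overline{\delta}_m=\max_{c\in F_m}\delta_m(c)\to 0$ and $L=O(k)$, both error ratios $\left|\sum_m\sum_{c\in F_m}(\pm\delta_m(c))\right|/N_k$ and $L/N_k$ tend to $0$, leaving $\lim_{k\to\infty}\vol(\PP_k)/N_k=V_{\infty}/2=5V_3/8$. I expect the main obstacle to be the combinatorial construction: writing down explicit $1$--skeletons $\PP_k^{(1)}$ that interpolate the hexagonal interior to a closed-up boundary and verifying that they satisfy the hypotheses of Andreev's theorem for $\pi/2$--equiangular polyhedra — in particular that the capping introduces no prismatic $4$--circuits — so that each $\PP_k$ genuinely exists as a compact $\pi/2$--equiangular polyhedron.
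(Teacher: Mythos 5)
Your proposal is correct and follows the paper's argument essentially verbatim: the same infinite non-ideal right-angled pattern whose nerve is the triangular lattice (so the polyhedron has all hexagonal faces), the same partition of faces into generation sets with Lemma~\ref{vcc} replacing Lemma~\ref{vc}, and the correct limiting cone volume $5V_3/4$ per hexagonal face, hence $5V_3/8$ per vertex. The one step you flag as the main obstacle is handled in the paper by recycling the hexagonal grid $L_{2k}$ from Proposition~\ref{asy3}, now with all dihedral angles $\pi/2$, and inserting tripods into the degree-$4$ faces created along the boundary lines so that every face has degree at least $5$ and Andreev's conditions for a compact right-angled polyhedron are met.
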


\begin{proof}
Define $L'_{2k}$ to be $L_{2k}$ as in the proof of Proposition~\ref{asy3} along
with the tripods as shown in figure~\ref{figure:hexmod}.  The tripods must be added
to remove the degree $4$ faces.
\begin{figure} 
\labellist
\hair 2pt
\pinlabel $(0,0)$ [tr] at 20 2		
\pinlabel $(1,0)$ [t] at 58 2		
\pinlabel $1$ at 97 270
\pinlabel $1$ at 213 270
\pinlabel $1$ at 330 270
\pinlabel $2$ at 97 204
\pinlabel $2$ at 213 204
\pinlabel $2$ at 330 204
\pinlabel $1$ at 97 137
\pinlabel $1$ at 213 137
\pinlabel $1$ at 330 137
\pinlabel $2$ at 40 237
\pinlabel $2$ at 155 237
\pinlabel $2$ at 272 237
\pinlabel $1$ at 40 170
\pinlabel $2$ at 155 170
\pinlabel $2$ at 272 170
\endlabellist
\begin{center}
\scalebox{.50}{\includegraphics{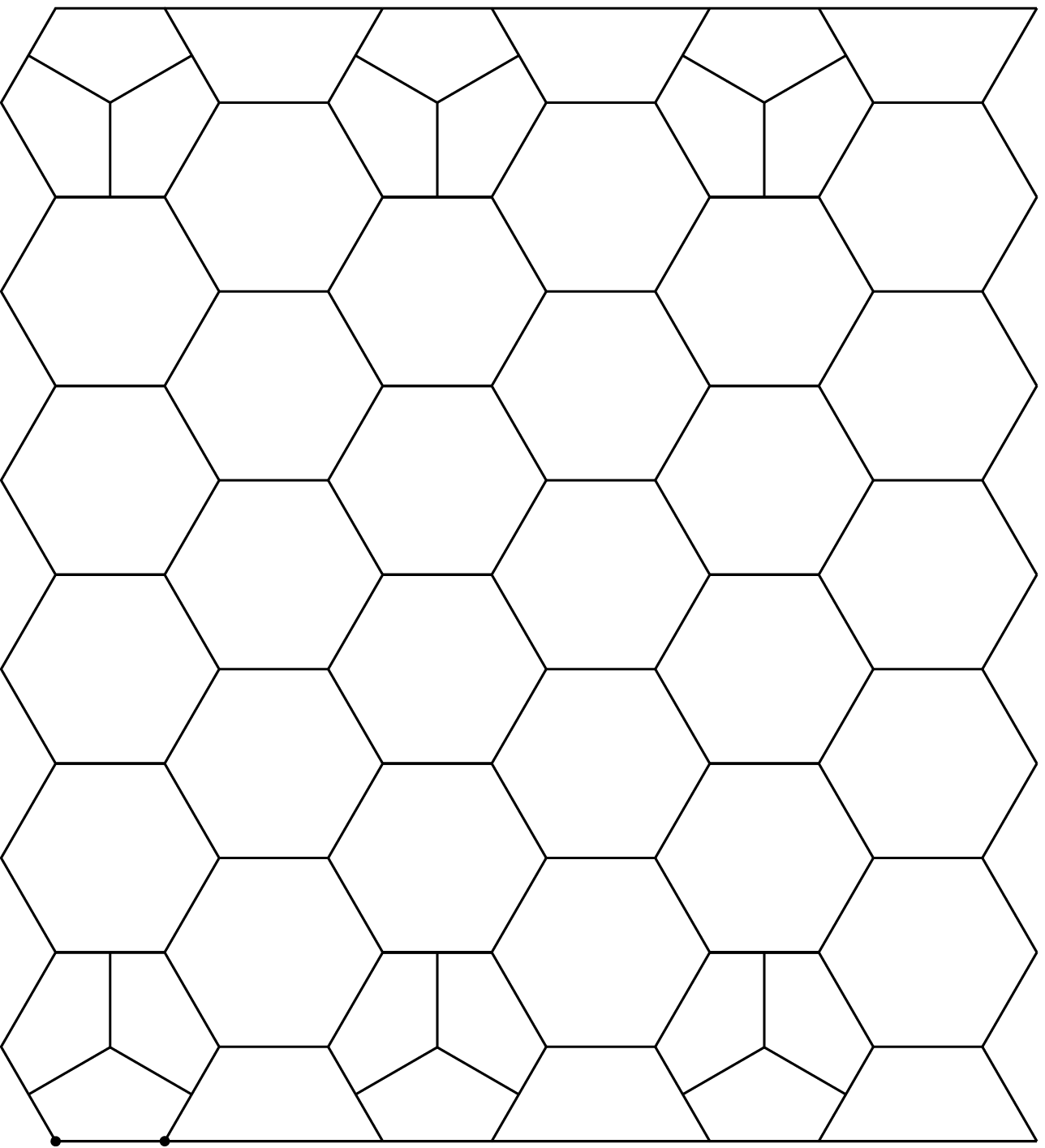}} 
\end{center}
\caption{Identify the left and right sides to get $\RRR_6$.  The labeling is as in
the previous figures.} 
\label{figure:hexmod} 
\end{figure}
 Let $\RRR_{2k}$ be
the polyhedron with $1$--skeleton 
$$\RRR_{2k}^{(1)}=\{(x,y)\in L'_{2k} \mid \, 0\leq y \leq 2k\sqrt{3}\}/\{(x,y)
\sim (x+3k,y)\}$$
and all dihedral angles equal to $\pi/2$.  For each $k>2$, this can be realized
as a compact hyperbolic polyhedron by Andreev's theorem.  The rest of the proof of this
proposition mirrors the proof of Proposition~\ref{asy3} exactly, using Lemma~\ref{vcc} in
place of Lemma~\ref{vc}.
\end{proof}
\bibliographystyle{./hamsplain}
\bibliography{./volpoly}
\end{document}